\documentclass[10pt]{amsart}

\usepackage{amsmath}
\usepackage{amssymb}
\usepackage{bm}
\usepackage{graphicx}
\usepackage{color}
\usepackage{xcolor}
\usepackage{float}
\usepackage{breqn}

\definecolor{amethyst}{HTML}{8b5cf6}
\definecolor{timothy}{HTML}{8b5cf6}

\definecolor{orange}{rgb}{0.988, 0.416, 0.012}
\definecolor{tony}{rgb}{0.988, 0.416, 0.012}

\definecolor{brown}{rgb}{0.588, 0.337, 0.208}
\definecolor{alan}{rgb}{0.588, 0.337, 0.208}

\definecolor{red}{rgb}{1,0,0}
\definecolor{felix}{rgb}{1,0,0}

\usepackage{hyperref}
\hypersetup{colorlinks=true, linkcolor=blue, citecolor=magenta, urlcolor=green}
\usepackage{url}
\usepackage{algpseudocode}
\usepackage{fancyhdr}
\usepackage{mathtools}
\usepackage{tikz-cd}
\usepackage{xy}
\input xy
\xyoption{all}
\usepackage{stmaryrd}
\usepackage{calrsfs}

\newtheorem{theorem}{Theorem}[section]
\newtheorem{lemma}[theorem]{Lemma}
\newtheorem{proposition}[theorem]{Proposition}
\newtheorem{corollary}[theorem]{Corollary}
\theoremstyle{definition}
\newtheorem{definition}[theorem]{Definition}
\newtheorem{example}[theorem]{Example}

\theoremstyle{remark}
\newtheorem{remark}[theorem]{Remark}
\numberwithin{equation}{section}

\newcommand{\aaa}{\mathbb{A}}
\newcommand{\cc}{\mathbb{C}}

\newcommand{\nn}{\mathbb{N}}
\newcommand{\pp}{\mathbb{P}}

\newcommand{\qq}{\mathbb{Q}}
\newcommand{\rr}{\mathbb{R}}
\newcommand{\zz}{\mathbb{Z}}

\providecommand\ldb{\llbracket}
\providecommand\rdb{\rrbracket}

\newcommand{\uu}{\mathcal{U}}

\newcommand{\supp}{\text{supp }}

\keywords{Cyclic semirings, divisibility, antimatterness, valuation, Perron numbers, factorization, atomicity, ascending chain condition on principal ideals, arithmetic of length}
\subjclass[2020]{Primary: 20M13, 06F05; Secondary: 20M10, 20M14}

\begin{document}
	
\mbox{}

\title{On the additive structure of algebraic valuations of polynomial semirings II}

\author{Timothy Chen}
\address{PRIMES-USA\\MIT\\Cambridge, MA 02139}
\email{ctimothy@mit.edu}

\author{Felix Gotti}
\address{Department of Mathematics\\MIT\\Cambridge, MA 02139}
\email{fgotti@mit.edu}

\author{Tony Lu}
\address{PRIMES-USA\\MIT\\Cambridge, MA 02139}
\email{tlu27@mit.edu}

\author{Alan Yao}
\address{PRIMES-USA\\MIT\\Cambridge, MA 02139}
\email{alanyao2008@gmail.com}

\date{\today}

\begin{abstract}
    For $\alpha \in \cc$, let $\mathbb{N}_0[\alpha]$ be the subsemiring of~$\mathbb{C}$ obtained as a homomorphic image of the $\alpha$-evaluation map $\nn_0[x] \to \cc$ defined as $p(x) \mapsto p(\alpha)$ for each polynomial $p(x) \in \nn_0[x]$. Fundamental arithmetic and atomic aspects of the additive structure of $\mathbb{N}_0[\alpha]$ were first studied by the second author and Correa–Morris (2022). In this paper, we continue the investigation, now from the valuation–theoretic perspective. 

    We show that for any algebraic number $\alpha$, the additive monoid of $\mathbb{N}_0[\alpha]$ contains no additive irreducibles if and only if it is isomorphic to the direct product of finitely many isomorphic valuation monoids (monoids whose principal ideals form a chain under inclusion). For any algebraic number $\alpha \in (0,1)$, these valuation monoids are precisely those where $\alpha^{-1}$ is a Perron number having no positive conjugates other than itself. In addition, we offer a description of the algebraic parameters $\alpha$ for which the additive structure of $\mathbb{N}_0[\alpha]$ is a valuation monoid. Finally, we argue that the subset of $(0,1)$ consisting of all algebraic parameters $\alpha$ such that the additive structure of $\mathbb{N}_0[\alpha]$ is a valuation monoid is dense in $(0,1)$.
\end{abstract}

\bigskip
\maketitle

\medskip
\section{Introduction}

Let $\mathbb{N}_0[x]$ be the semiring of all polynomials in an indeterminate $x$ with nonnegative coefficients. The main purpose of this paper is to investigate, for complex parameters $\alpha$, the additive monoid
\[
  M_\alpha := \{p(\alpha) : p(x) \in \mathbb{N}_0[x]\}
\]
arising as the additive structure of the subsemiring $\mathbb{N}_0[\alpha]$ of~$\mathbb{C}$. When $\alpha$ is transcendental over the rationals, $M_\alpha$ is isomorphic to the additive monoid of $\mathbb{N}_0[x]$, which is the free commutative monoid on a countable set. Therefore, we tacitly assume that~$\alpha$ is algebraic throughout this paper. We denote by $\aaa$ the set of all algebraic numbers.
\smallskip

An additive commutative monoid is called atomic if every non-invertible element can be expressed as a finite sum of atoms (i.e., irreducible elements), which means that there are enough atoms in the monoid to create atomic decompositions of any non-invertible element. On the opposite end of the atomic spectrum, a monoid is called antimatter if it has no atoms at all---a term introduced by Coykendall, Dobbs, and Mullins~\cite{CDM99} in the setting of integral domains. It is known that for each $\alpha \in \mathbb{C}$ the monoid $M_\alpha$ is either atomic or antimatter (see~\cite[Theorem~4.2]{CG22}). Since the atomic case was the central focus of that earlier work, the present article concentrates on the complementary class of antimatter monoids. 
\smallskip

\newpage
This paper continues the program initiated in~\cite{CG22}, where the arithmetic and factorization properties of the monoids $M_\alpha$ were examined. Most of the results there relied on the assumption that $M_\alpha$ is atomic, with special emphasis on the subclass of monoids $M_\alpha$ satisfying the ascending chain condition on principal ideals. Here, we undertake a complementary analysis emphasizing those monoids having no atoms at all---that is, the antimatter monoids---and, within this class, we focus on the subclass of valuation monoids. We obtain characterizations of both the antimatter monoids and valuation monoids inside the class consisting of all monoids $M_\alpha$, and these characterizations are in terms of the minimal polynomial of~$\alpha$.
\smallskip

The monoids $M_\alpha$ have drawn increasing attention in recent years. These additive monoids seemed to be first considered in~\cite[Section~5]{GG18}, where the positive rational parameters~$q$ for which $M_q$ is atomic were determined. The special setting where the parameter $q$ of $M_q$ is positive rational was studied deeper by Chapman et al.~\cite{CGG20}, where the authors focus on the study of the length sets of $M_q$ and related factorization invariants, proving that the length set of any nonzero element $r \in M_q$ is an arithmetic progression with common difference $|\mathsf{n}(r) - \mathsf{d}(r)|$. For the rational setting, further factorization invariants and arithmetic properties of $M_q$ were carried out by Albizu-Campos et al.~\cite{ABP21}, who considered not only the monoids $M_q$ but also the submonoids of $M_q$ generated by all the powers $q^n$ whose exponents~$n$ belong to a given numerical monoid. The existence of certain canonical representations inside the rational monoids $M_q$ has been recently studied in \cite{CGGP25} by Chapman et al.
\smallskip

The first general and systematic investigation of the additive monoids $M_\alpha$, where $\alpha$ is taken to be any nonnegative real number, was carried out in~\cite{CG22} by Correa-Morris and the second author. In the same paper, the authors establish several foundational results on the atomicity, factorization, and the structure of principal ideals of~$M_\alpha$, putting special emphasis on the classical factorization properties considered by Anderson et al.~in their landmark paper~\cite{AAZ90}. Motivated by~\cite{CG22}, some other authors have recently made interesting contributions to the study of the arithmetic and atomic structure of the additive monoids $M_\alpha$. For instance, for the same class of monoids, Jiang, Li, and Zhu~\cite{JLZ23} have investigated the omega primality and the elasticity, while Ajran et al.~\cite{ABLST23} have investigated the system of length sets, the sets of Betti elements, and the catenary degree.
\smallskip

In Section~\ref{sec:background}, we introduce the relevant notation, common terminology, and the background needed to follow the rest of the paper. In Section~\ref{sec:algebraic considerations}, we briefly present an algebraic result that will allow us to restrict our attention to the algebraic parameters $\alpha$ whose corresponding minimal polynomials cannot be obtained by composing a polynomial in $\nn_0[x]$ with any of the monomials $x^n$ for $n \ge 2$. In Section~\ref{sec:antimatter}, we further explore the conditions for $M_\alpha$ to be antimatter. Recall that for positive algebraic $\alpha$, the monoid $M_\alpha$ is precisely one of atomic or antimatter~\cite[Theorem~4.2]{CG22}. As non-atomic monoids were not considered in that motivating paper, a significant portion of our paper is dedicated to this case. In Section~\ref{sec:valuation}, we focus on identifying the antimatter monoids that are valuation monoids, or products thereof. We provide two major results in this direction. First, we argue the existence of nontrivial valuation monoids $M_\alpha$ of any given positive rank. We then find several exact characterizations for the class of valuation monoids both in terms of algebraic conditions on $\alpha$ and other divisibility properties of $M_\alpha$, and we also present two examples illustrating the intricacies of the proof of this last result.

\bigskip
\section{Background}
\label{sec:background}

\textbf{General Notation.}
As customary, $\zz$, $\qq$, $\rr$, $\aaa$, and $\cc$ will denote the set of integers, rational numbers, real numbers, algebraic complex numbers, and complex numbers, respectively. We let $\pp$, $\nn$, and $\nn_0$ denote the set of rational primes, positive integers, and nonnegative integers, respectively. For $a,b \in \rr$ with $a \le b$, we let $\ldb a,b \rdb$ denote the set of integers between $a$ and $b$, i.e.,
\[
    \ldb a,b \rdb := \{n \in \zz : a \le n \le b\}.
\]
In addition, for a subset $S$ of $\cc$ and an element $r \in \rr$, we set
\[
	S_{\ge r} := \{s \in S \cap \rr : s \ge r\} \quad \text{ and } \quad S_{> r} := \{s \in S \cap \rr : s > r\}.
\]
For any commutative semiring $S$, we let $S[x]$ denote the commutative semiring consisting of all the polynomials with coefficients in $S$. In particular, $\nn_0[x]$ consists of all polynomials with nonnegative integer coefficients. In the scope of this paper, we find it convenient to set
\[
    x\nn_0[x] + c := \{xf(x) + c : f(x) \in \nn_0[x] \}
\]
for each $c \in \zz$. The set $x\nn_0[x] - 1$ will be especially important in the coming sections. For any $f(x) \in x \nn[x] - 1$ with root $\alpha \in \aaa$, we say that $f(\alpha) + 1$ is an \emph{antimatter decomposition} of $\alpha$.

\medskip
\subsection{Commutative Monoids}

An additively written commutative semigroup $S$ is called \emph{cancellative} if for all $a,b,c \in S$ the equality $a+b = a+c$ implies that $b=c$. Although a monoid is usually defined to be a semigroup with an identity element, in the scope of this paper, the term \emph{monoid} refers to a cancellative and commutative semigroup with an identity element. Let $M$ be an additively written monoid. For any subsets $A$ and $B$ of $M$, we write
\[
	A+B := \{a+b : a \in A \text{ and } b \in B\},
\] 
and if $A = \{a\}$ for some $a \in M$ then we often write $a+B$ instead of $\{a\} + B$. The \emph{group of units} of~$M$ is the abelian group $\uu(M)$ consisting of all invertible elements of~$M$. Two elements $a,b \in M$ are called \emph{associates} if $a \in b + \uu(M)$ (or, equivalently, $b \in a + \uu(M)$). The \emph{reduced monoid} of $M$, denoted $M_{\text{red}}$, is the quotient $M/\uu(M)$. We say that $M$ is \emph{reduced} if $\uu(M)$ is the trivial group, in which case, we can identify $M$ with $M_{\text{red}}$ via the natural homomorphism $m \mapsto m + \uu(M)$ (for all $m \in M$).
\smallskip

A non-invertible element $a \in M$ is called an \emph{atom} if for all $b,c \in M$, the equality $a = b+c$ implies that $\uu(M) \cap \{b,c\}$ is nonempty. We let $\mathcal{A}(M)$ denote the set consisting of all the atoms of $M$. The notion of an antimatter monoid is essential within the scope of this paper.
\begin{definition}[Coykendall, Dobbs, and Mullins;~\cite{CDM99}]
	A monoid is \emph{antimatter} if its set of atoms is empty.
\end{definition}

An element $b \in M$ is called \emph{atomic} if either $b$ is invertible or $b$ can be written as a sum of finitely many atoms of $M$ (allowing repetitions). Following Cohn~\cite{pC68}, we say that the monoid $M$ is \emph{atomic} if every element of $M$ is atomic. We let $\mathsf{Z}(M)$ denote the free commutative monoid on the set $\mathcal{A}(M_{\text{red}})$, and let $\pi \colon \mathsf{Z}(M) \to M_{\text{red}}$ be the only monoid homomorphism fixing the subset $\mathcal{A}(M_{\text{red}})$ of $\mathsf{Z}(M)$. For every element $a \in M$, we set
\[
	\mathsf{Z}(a) = \pi^{-1} (a + \uu(M)).
\]
Note that $M$ is atomic if and only if $\mathsf{Z}(a)$ is nonempty for all $a \in M$. An element $a \in M$ is called \emph{factorial} provided that $\mathsf{Z}(a)$ is a singleton. If every element of $M$ is factorial, then~$M$ is called a \emph{unique factorization monoid} (UFM). 

\smallskip
\subsection{Divisibility and the Valuation Property}

For $a,b \in M$, we say that $b$ \emph{divides} $a$ in $M$ and write $b \mid_M a$ if there exists $c \in M$ such that $a = b+c$. An element $p \in M \setminus \uu(M)$ is \emph{primal} if whenever $p \mid_M a+b$ for some $a,b \in M$, one can write $p = a' + b'$ for some elements $a', b' \in M$ such that $a' \mid_M a$ and $b' \mid_M b$. Then we say that the monoid $M$ is called a \emph{pre-Schreier monoid} or \emph{PS~monoid} if every non-invertible element of $M$ is primal. One can readily show that every UFM is a pre-Schreier monoid.
\smallskip

Let~$S$ be a nonempty subset of $M$. An element $d \in M$ is called a \emph{common divisor} of $S$ if $d \mid_M s$ for all $s \in S$. A common divisor $g \in M$ of $S$ is called a \emph{greatest common divisor} (GCD) of $S$ if any other common divisor of $S$ divides $g$ in~$M$. We denote the set consisting of all GCDs of~$S$ by either $\gcd_M(S)$ or $\gcd(S)$. Observe that any two GCDs of~$S$ in~$M$ are associates. Therefore, if the set consisting of all the GCDs of~$S$ is nonempty, then it must have the form $g + \uu(M)$ for some $g \in M$. If every nonempty finite subset of $M$ has a GCD in $M$, then $M$ is called a \emph{GCD monoid}. It is well known and not difficult to verify that every UFM is a GCD monoid.
\smallskip

The primary property we investigate in this paper is the valuation property, and it can be defined in terms of divisibility in the following way.

\begin{definition}
	A monoid $M$ is a \emph{valuation monoid} if for all $a,b \in M$ either $a \mid_M b$ or $b \mid_M a$.
\end{definition}

Observe that every valuation monoid is a GCD monoid and, therefore, we obtain the following diagram of classes of monoids.
\begin{center}
    \begin{figure}[h]
        \begin{tikzcd} 
            \textbf{ Valuation }     \arrow[r, Leftarrow, shift right=0.8ex] \arrow[red, r, Rightarrow, "/"{anchor=center,sloped}, shift left=0.8ex]
            & \textbf{ GCD }  \arrow[r, Rightarrow, shift right=0.8ex] \arrow[red,  r,  Leftarrow, "/"{anchor=center,sloped}, shift left=0.8ex] 
            & \textbf{ PS } \\
        \end{tikzcd}
        \caption{The (red) marked arrows emphasize that none of the shown implications is reversible.}
        \label{fig:three weaker notions of the FF property}
    \end{figure}
\end{center}

Observe that in the additive monoid $\nn_0$, the divisibility relation coincides with the standard order relation, whence $\nn_0$ is a valuation monoid. In Section~\ref{sec:valuation}, we provide sufficient conditions for a monoid $M_\alpha$ to be a valuation monoid. Now we look at the class consisting of all monoids $M_q$ induced by rational parameters~$q$, and we verify that the three properties in Figure~\ref{fig:three weaker notions of the FF property} are equivalent for monoids in such a class.


\begin{proposition}
    For any $q \in \qq_{>0}$, the following conditions are equivalent.
    \begin{enumerate}
        \item[(a)] $q \in \nn \cup \nn^{-1}$.
        \smallskip

        \item[(b)] $M_q$ is a valuation monoid.
        \smallskip
        
        \item[(c)] $M_q$ is a GCD monoid.
        \smallskip
        
        \item[(d)] $M_q$ is a pre-Schreier monoid.
    \end{enumerate}
\end{proposition}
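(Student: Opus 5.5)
We will prove the cycle (a) $\Rightarrow$ (b) $\Rightarrow$ (c) $\Rightarrow$ (d) $\Rightarrow$ (a). The middle implications (b) $\Rightarrow$ (c) $\Rightarrow$ (d) are the general facts recorded in Figure~\ref{fig:three weaker notions of the FF property}. If every pair of elements is comparable under divisibility, the smaller one is a GCD. In a GCD monoid every non-invertible element is primal: if $p \mid a+b$, take $a' = \gcd(p,a)$ and let $b'$ satisfy $p = a' + b'$. The standard GCD identity $\gcd(p,a+b) = p$, together with $\gcd(p,a) = a'$, gives $b' \mid b$. We will cite or sketch this routine argument, so the real work lies in (a) $\Rightarrow$ (b) and (d) $\Rightarrow$ (a).

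For (a) $\Rightarrow$ (b), first suppose $q = n \in \nn$. Then $M_q = \nn_0$, which is a valuation monoid since divisibility coincides with the usual order. Next suppose $q = 1/n$ with $n \ge 2$. Then $M_q = \nn_0[1/n] = \{a/n^k : a,k \in \nn_0\}$, the nonnegative part of the group $\zz[1/n]$. For $x \le y$ in $M_q$ we have $y - x \in \zz[1/n]_{\ge 0} = M_q$, so $x \mid_{M_q} y$ and divisibility is again the order. (The case $q=1$ is covered by $\nn_0$.)

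For (d) $\Rightarrow$ (a), we argue by contraposition. Write $q = a/b$ in lowest terms with $a,b \ge 2$ (this is exactly the case $q \notin \nn \cup \nn^{-1}$). The key structural fact we will use is that $M_q$ is then reduced and atomic, with atom set $\{q^n : n \in \nn_0\}$; this is the result of \cite[Section~5]{GG18}, and it can also be checked directly via the relation $b q^{n+1} = a q^n$. The atoms are pairwise non-associated, and the element $a q^n = b q^{n+1}$ has at least the two factorizations $a \cdot q^n$ and $b \cdot q^{n+1}$. We want a non-primal element, and the natural candidate is an atom $p$. An atom $p$ is primal only if, whenever $p \mid a_1 + a_2$, either $p \mid a_1$ or $p \mid a_2$, since the only decompositions of $p$ are $p + 0$ and $0 + p$. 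In other words, a primal atom must be prime. Take $p = q$, so that $p \mid b q = a = a \cdot 1$, i.e., $q$ divides $1 + 1 + \dots + 1$ ($a$ copies of $1$). For $q$ to be prime it would have to divide some proper partial sum $j \cdot 1$ with $1 \le j \le a$, and we will show this fails. Using $j \cdot 1$ with $j$ minimal, it suffices to show that $q \nmid_{M_q} j$ for $j < a$ and to handle $j=a$ by splitting differently: write $a = 1 + (a-1)$. We then need $q \nmid 1$ and $q \nmid (a-1)$ in $M_q$.

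The main obstacle is verifying these non-divisibility claims in $M_q$. The plan is to use the canonical representation of elements of $M_q$: every element has a unique representation $\sum c_i q^i$ with $0 \le c_i < b$ for $i \ge 1$. This is obtained by trading $b q^{i+1}$ for $a q^i$, and it is where $a,b \ge 2$ matters. Alternatively, we can use the valuation-type argument that $x \in q + M_q$ requires $x - q \in M_q$, and elements of $M_q$ with small size have restricted denominators. Concretely, $1 - q$ is negative when $q>1$; when $q<1$ it equals $(b-a)/b$, and we will check that $(b-a)/b \notin M_q$ because any element of $M_q$ below $1$ is a combination of the $q^i$ with $i \ge 1$, whose denominators force $a \mid$ the numerator in a suitable reduced form. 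Similarly, for $(a-1) - q = (ab - a - b)/b$, we will use the same denominator/numerator bookkeeping (reducing modulo $a$) to conclude that it is not in $M_q$. This shows that $q$ is not primal, so $M_q$ is not pre-Schreier.
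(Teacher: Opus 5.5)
Your argument is correct. For (d) $\Rightarrow$ (a) it takes a somewhat different route from the paper.

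The paper first disposes of the antimatter case via \cite{GG18}. It then uses the pre-Schreier property to make every atom prime, concludes that $M_q$ is a UFM, and reaches a contradiction from the two distinct factorizations $b \cdot q$ and $a \cdot 1$ of the element $a = \mathsf{n}(q)$.

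You instead argue contrapositively with $q = a/b$ and $a,b \ge 2$. This puts you directly in the atomic case, with no need for the atomic/antimatter dichotomy. You then exhibit one explicit non-primal atom, namely $q$, via $q \mid_{M_q} 1 + (a-1)$. Your version is more local: it gives a concrete witness of non-primality and avoids the step ``atomic and generated by primes implies UFM.'' The paper's version is shorter, because non-uniqueness of factorization does the divisibility bookkeeping for it.

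The step you flag as the main obstacle can in fact be bypassed. If $q$ were prime, then $q \mid_{M_q} a \cdot 1$, combined with induction on the number of summands (your ``minimal $j$'' idea made precise), gives $q \mid_{M_q} 1$. That is impossible, because $1$ is an atom of $M_q$ and $q \neq 1$. This is just the paper's two-factorization observation rephrased in terms of primes.

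If you prefer to keep the split $a = 1 + (a-1)$, your mod-$a$ bookkeeping does go through. Write $(a-1) - q = \sum_{i=0}^k c_i q^i$ with $k \ge 1$ and multiply by $b^k$. This gives $c_0 b^k \equiv -b^k \pmod a$, hence $c_0 \equiv -1 \pmod a$ since $\gcd(a,b)=1$. Therefore $c_0 \ge a-1 > (a-1) - q$, a contradiction.
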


\begin{proof}
    (a) $\Rightarrow$ (b): If $q \in \nn$, then $M_q = \nn_0$, which is clearly a valuation monoid. If $q \in \nn^{-1}$, then $q = \frac1d$ for some $d \in \nn_{\ge 2}$, and so
    \[
    	M_q = \bigg\langle \frac1{d^k} : k \in \nn \bigg\rangle = \bigg\{ \frac{n}{d^k} : n,k \in \nn_0 \bigg\} = \mathbb{\zz}\bigg[ \frac1{d}\bigg]_{\ge 0},
    \]
    As $\zz\big[\frac1d\big]_{\ge 0}$ is the nonnegative cone of the additive abelian group $\zz\big[\frac{1}d\big]$, the divisibility relation in $M_q$ coincides with the standard order relation. Hence we conclude that $M_q$ is a valuation monoid.
	\smallskip
	
    (b) $\Rightarrow$ (c) $\Rightarrow$ (d): These two implications hold for general commutative monoids.
    \smallskip

    (d) $\Rightarrow$ (a): Assume that the monoid $M_q$ is a pre-Schreier monoid. If $M_q$ is antimatter, then it follows from \cite{GG18} that $q=\frac1d$ for some $d \in \nn_{\ge 2}$. 
	Now assume that $M_q$ is not antimatter. In this case, $M_q$ must be atomic. As $M_q$ is a pre-Schreier, every atom of $M_q$ is also a primal element and so a prime element. Hence $M_q$ is generated by primes, which means that it is a UFM. Now it follows from \cite[Section~6]{GG18} that either $M_q = \nn_0$ or
	\[
		\mathcal{A}(M_q) = \{q^n : n \in \nn_0\}.
	\]
	However, notice that were $q$ an atom of $M_q$, the element $\mathsf{n}(q)$ would have at least two factorizations, namely, $\mathsf{d}(q)$ copies of $q$ or $\mathsf{n}(q)$ copies of $1$---where $\mathsf{n}(q)$ is the numerator of $q$ and $\mathsf{d}(q)$ its denominator. This contradicts $M_q$ being a UFM. Hence $M_q = \nn_0$, which implies that $q \in \nn$.
\end{proof}

Let $R$ be an integral domain. We let $R^*$ and $R^\times$ denote the multiplicative monoid of $R$ and the group of units of $R$, respectively. It is clear that $R^\times = \uu(R^*)$. We say that~$R$ is a \emph{GCD domain} if the multiplicative monoid $R^*$ is a GCD monoid. Assume now that $R$ is a GCD domain. For a nonempty subset $S$ of $R$ not containing $0$, we also refer to any GCD of $S$ in $R^*$ as a GCD of $S$ in $R$.

\medskip
\subsection{Polynomials}

Throughout this section, we let $R$ be an integral domain. For $c_0, \dots, c_d \in R$ such that $c_d \neq 0$, consider the polynomial
\begin{equation} \label{eq:polynomial f(x)}
	f(x) := \sum_{n=0}^d c_n x^n \in R[x].
\end{equation}
For each $i \in \ldb 0,d \rdb$, it is often convenient to denote the coefficient $c_i$ by $[x^i]f(x)$. The \emph{support} of the polynomial $f(x)$ is the set of degrees of its nonzero terms:
\[
    \supp f(x) := \{k \in \ldb 0,d \rdb : c_k \neq 0\}.
\]

\smallskip
Now assume that $R$ is a GCD domain. The \emph{content} of $f(x)$ is the set $c(f) := \gcd(c_0, \dots, c_d)$. If $c(f) = R^\times$, then $f(x)$ is called \emph{primitive}. Gauss's lemma, which we use often throughout this paper, states that the product of primitive polynomials over a GCD domain is primitive. If $r \in c(f)$, then $f(x)/r$ is called a \emph{primitive part} of $f(x)$. When $R = \qq$, there exists unique $r \in \qq_{>0}$ and $p(x), q(x) \in \nn_0[x]$ such that $r f(x)$ is a primitive polynomial in $\zz[x]$, $rf(x) = p(x) - q(x)$, and $\supp p(x)$ is disjoint from $\supp q(x)$. In this case, we call $(p(x), q(x))$ the \emph{minimal pair} of $f(x)$.
\smallskip

We often denote the minimal polynomial of an algebraic number $\alpha$ by $m_\alpha(x) \in \qq[x]$. The \emph{degree} of $\alpha$ is $\deg m_\alpha(x)$ while the \emph{conjugates} of $\alpha$ are the roots of $m_\alpha(x)$. We denote the minimal pair of $m_\alpha(x)$ by $(p_\alpha(x), q_\alpha(x))$, also calling the latter the \emph{minimal pair} of~$\alpha$. The \emph{reciprocal polynomial} of $f(x)$ is the polynomial of $R[x]$ obtained by reversing the coefficients of $f(x)$, that is, $\sum_{n=0}^d c_{d-n}x^n = x^d f(x^{-1})$. For an algebraic number $\alpha$, let $r_\alpha(x)$ denote the reciprocal polynomial of $m_\alpha(x)$:
\begin{equation} \label{eq:reciprocal polynomial of alpha}
	 r_\alpha(x) = x^d m_\alpha\Big(\frac1x \Big).
\end{equation}
\smallskip

We conclude this subsection by recalling Descartes' rule of signs as it will be a helpful tool at our disposal throughout this paper. Assume now that $R=\rr$, and let $f(x)$ be defined as in~\eqref{eq:polynomial f(x)}. We say that $f(x)$ has a \emph{sign variation} at $i \in \ldb 1,d \rdb$ provided that $c_i c_{i-1} < 0$. 

\begin{theorem}[Descartes' rule of signs]
	 The number of sign variations of a nonzero polynomial $f(x) \in \rr[x]$ has the same parity as and is at least the number of positive roots of $f(x)$ (counting multiplicity).
\end{theorem}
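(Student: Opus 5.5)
The plan is to prove the two claims separately: first the parity statement, by comparing signs of $f$ at $0$ and at $+\infty$, and then the inequality, by induction on the degree using the derivative and Rolle's theorem. Throughout, write $V(f)$ for the number of sign variations of $f(x)=\sum_{n=0}^d c_n x^n$, where zero coefficients are discarded before counting, so that the coefficients with $c_ic_{i-1}<0$ are read off the nonzero coefficients in order. Write $Z(f)$ for the number of positive roots of $f$ counted with multiplicity.

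First reduce to the case $c_0 \neq 0$. If $k$ is the least element of $\supp f(x)$, then $f(x)=x^k g(x)$ with $g(0)\neq 0$. Dividing by $x^k$ changes neither the positive roots nor the sequence of nonzero coefficients, so $V(f)=V(g)$ and $Z(f)=Z(g)$.

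\textbf{Parity.} Assume $c_0\neq 0$.
\begin{enumerate}
\item[(i)] Walking along the nonzero coefficients from $c_0$ to $c_d$, the sign changes an even number of times exactly when $c_0$ and $c_d$ have the same sign. Hence $V(f)\equiv 0 \pmod 2$ if and only if $c_0c_d>0$.
\item[(ii)] Factor $f(x)=h(x)\prod_i (x-r_i)^{m_i}$ over the distinct positive roots $r_i$, where $h$ has no positive roots. Then $h$ has constant sign on $[0,\infty)$.
\item[(iii)] Each factor $(x-r_i)^{m_i}$ takes the value $(-r_i)^{m_i}$ at $0$ and is positive as $x\to+\infty$. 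Comparing $\operatorname{sign} f(0)=\operatorname{sign} c_0$ with $\operatorname{sign} f(+\infty)=\operatorname{sign} c_d$ therefore gives $\operatorname{sign}(c_0c_d)=(-1)^{\sum m_i}=(-1)^{Z(f)}$.
\end{enumerate}
Together, (i) and (iii) give $Z(f)\equiv V(f) \pmod 2$.

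\textbf{Inequality.} Argue by strong induction on $d=\deg f$, for all nonzero real polynomials at once; the base case $d=0$ is trivial. For $d\ge 1$, the proof consists of three bounds.
\begin{enumerate}
\item[(a)] $V(f')\le V(f)$. The nonzero coefficients of $f'$ are $nc_n$ for $n\ge 1$ with $c_n \neq 0$, and they carry the same signs as the $c_n$. Removing $c_0$ from the coefficient sequence can only remove a variation, never create one.
\item[(b)] $Z(f)\le Z(f')+1$. Let $r_1<\dots<r_s$ be the distinct positive roots of $f$, with multiplicities $m_i$. Then $r_i$ is a root of $f'$ of multiplicity $m_i-1$. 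Rolle's theorem gives a further root of $f'$ strictly inside each interval $(r_i,r_{i+1})$, and these roots are distinct from the $r_i$. Hence $Z(f')\ge \sum m_i - s + (s-1) = Z(f)-1$.
\item[(c)] $Z(f')\le V(f')$. This is the induction hypothesis applied to $f'$, which is nonzero of degree $d-1$. The hypothesis is stated for all nonzero polynomials, so it applies even if $f'(0)=0$.
\end{enumerate}
Chaining the three bounds gives $Z(f)\le Z(f')+1\le V(f')+1\le V(f)+1$. By the parity statement $Z(f)\equiv V(f)\pmod 2$, which rules out $Z(f)=V(f)+1$, and so $Z(f)\le V(f)$.

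The main obstacle I expect is the multiplicity bookkeeping in (b). It needs the facts that a root of multiplicity $m$ of $f$ is a root of multiplicity exactly $m-1$ of $f'$, and that the Rolle roots do not coincide with any $r_i$. The rest of the argument is routine sign tracking.
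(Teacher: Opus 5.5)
The paper does not prove this statement. It only recalls Descartes' rule of signs in the background section as a classical tool, so there is no argument of the authors' to compare yours against.

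Your proof is the standard one, and it is correct as written. The parity comes from comparing the sign of $f(0)=c_0$ with the sign of the leading coefficient. The inequality then follows by strong induction on the degree, chaining $V(f')\le V(f)$ with the Rolle bound $Z(f)\le Z(f')+1$ and using parity to exclude $Z(f)=V(f)+1$.

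The step you flag as the main obstacle is routine. In (b) you only need each $r_i$ to be a root of $f'$ of multiplicity \emph{at least} $m_i-1$. The Rolle roots are automatically distinct from the $r_i$ and from each other, since they lie in the disjoint open intervals between consecutive roots. When $s=0$ the bound $Z(f)\le Z(f')+1$ is trivial.

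Your convention of discarding zero coefficients before counting variations is necessary, not just convenient, and you should say explicitly that you are adopting it. The paper's literal definition counts a variation at $i$ only when $c_ic_{i-1}<0$ for adjacent indices, and under that reading the statement is false. For example, $x^2-1$ has $c_1c_0=c_2c_1=0$, so it has no variations, yet it has the positive root $1$.

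Your step (i) of the parity argument is exactly where the zero-skipping convention is used. Under the adjacent-index definition, ``the number of variations is even iff $c_0c_d>0$'' fails. The paper itself tacitly uses your convention elsewhere, for instance when it asserts that every polynomial in $x\nn_0[x]-1$ has exactly one sign variation, even though such polynomials typically have gaps in their support.
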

\smallskip

Let $\text{Int}(\zz)$ be the ring of integer-valued polynomials, which is the subring of $\qq[x]$ consisting of all polynomials $f(x) \in \qq[x]$ with $f(\zz) \subseteq \zz$. Note that $\zz[x] \subseteq \text{Int}(\zz) \subseteq \qq[x]$. In general, the inclusion $\zz[x] \subseteq \text{Int}(\zz)$ is strict: for instance, $\binom{x}{2} \in \text{Int}(\zz)$ even though it does not belong to $\zz[x]$. In addition, for every $n \in \nn_0$,
\[
	\binom{x}{n} := \frac{x(x-1) \cdots (x - (n-1))}{n!} \in \text{Int}(\zz),
\]
where we assume the convention that $\binom{x}{0} = 1$. The ring $\text{Int}(\zz)$ is a free $\zz$-module with regular basis $\big\{ \binom{x}{n} : n \in \nn_0 \big\}$. Indeed, if we set $\Delta f(k) = f(k+1) - f(k)$, then the Gregory-Newton formula allows us to write any polynomial $f(x)$ of degree $d$ in $\text{Int}(\zz)$ as a unique $\zz$-linear combination of the $\binom{x}{n}$'s as follows:
\begin{equation*} \label{eq:Gregory-Newton formula}
	f(x) = \sum_{j=0}^d \Delta^j f(0) \binom{x}{j}.
\end{equation*}

\medskip
\subsection{Linear Homogeneous Recurrence Relations}

Several of our proofs involve linear recurrence relations. Given a field $F$, a \emph{linear homogeneous recurrence relation of degree} $k$ in $F$ is an equation in countably many variables $(x_n)_{n \ge 0}$ that defines the $n$-th term of a sequence as a linear combination of the previous $k$ terms as follows:
\begin{equation} \label{eq:LHRR}
	x_n = \sum_{j=1}^k c_j x_{n-j}, 
\end{equation}
where $c_1, \dots, c_k \in F$ and $c_{k} \ne 0$. A solution of~\eqref{eq:LHRR} is a sequence $(s_n)_{n \ge 0}$ with terms in $F$ that satisfies~\eqref{eq:LHRR}. The \emph{characteristic polynomial} of the recurrence relation in~\eqref{eq:LHRR} is
\[
	p(x) = x^k - \sum_{j=1}^k c_j x^{k-j}.
\]
It is well known that this type of recurrence relation can be solved explicitly in terms of the roots of their corresponding characteristic polynomials as follows.

\begin{theorem} \label{thm:LHRR solutions} 
    Let $F$ be a field, and let $p(x)$ be a polynomial in $F[x]$ of degree~$d$ that splits as $p(x) = \prod_{i=1}^r (x - \rho_i)^{e_i}$ in the splitting field $K$ of $p(x)$. The set of solutions of the linear recurrence relation with characteristic polynomial $p(x)$ is the $d$-dimensional vector space $V$ over~$F$ with basis
    \[
    	\mathcal{B}_{p(x)} = \big\{ (n^j \rho_i^n)_{n \ge 0} : i \in \ldb 1,r \rdb \text{ and } j \in \ldb 0,e_i - 1 \rdb \big\}.
    \]
    Thus, the vector space $V$ consists of all sequences $(s_n)_{n \ge 0}$ with terms in~$K$ for which there exist polynomials $p_1(x), \dots, p_r(x) \in K[x]$ with $\deg p_i(x) < e_i$ such that
    \[
    	s_n = p_1(n)\rho_1^n + p_2(n)\rho_2^n + \dots + p_r(n) \rho_r^n
    \]
    for every $n \in \nn$.
\end{theorem}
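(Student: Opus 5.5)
The statement to prove is Theorem~\ref{thm:LHRR solutions}. The plan is to show that the solution set is a $d$-dimensional space, that each listed sequence is a solution, and that the listed sequences are linearly independent. Since there are exactly $d$ of them, they then form a basis.

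\emph{Step 1: dimension.} The map $V \to K^d$ sending a solution $(s_n)_{n \ge 0}$ to its initial segment $(s_0, \dots, s_{d-1})$ is linear. It is bijective because the recurrence, with $c_d \ne 0$ and $k = d$, determines every later term uniquely from the first $d$ terms. Hence the space of solutions with terms in $K$ has dimension $d$. Solutions with terms in $F$ are exactly those whose initial segment lies in $F^d$. This reconciles the phrase ``over $F$'' with the description of $V$ as the sequences with terms in $K$ of the stated form.

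\emph{Step 2: each listed sequence is a solution.} Let $E$ be the shift operator $(Es)_n = s_{n+1}$. A sequence $s$ solves the recurrence exactly when $p(E)s = 0$. Since $p(E) = \prod_i (E - \rho_i)^{e_i}$ and the factors commute, it suffices to check that $(E-\rho_i)^{e_i}$ kills $(n^j \rho_i^n)_{n \ge 0}$ for each $j < e_i$. This goes by induction on $j$, using
\[
  (E - \rho)(q(n)\rho^n) = \rho\,\bigl(q(n+1) - q(n)\bigr)\rho^n .
\]
This identity lowers the degree of the polynomial coefficient $q$ by one, so $j+1 \le e_i$ applications give zero. If some $\rho_i = 0$, the sequence $(n^j \rho_i^n)$ should be read with the convention $0^0 = 1$. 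This case cannot occur here anyway, because $c_d \ne 0$ forces $p(0) \ne 0$.

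\emph{Step 3: linear independence.} This is the main point. Suppose $\sum_i P_i(n)\rho_i^n = 0$ for all $n$, with $\deg P_i < e_i$ and not all $P_i$ zero. We induct on the total quantity $\sum_i (\deg P_i + 1)$ taken over the nonzero $P_i$.

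Pick an index $i_0$ with $P_{i_0} \ne 0$ and apply $E - \rho_{i_0}$. By the identity in Step 2, this lowers the degree of $P_{i_0}$ by one, or kills it if $P_{i_0}$ is constant. For $i \ne i_0$, the identity
\[
  (E - \rho_{i_0})(P_i(n)\rho_i^n) = \bigl(\rho_i P_i(n+1) - \rho_{i_0} P_i(n)\bigr)\rho_i^n
\]
shows that each other $P_i$ is replaced by a polynomial of the same degree. Its leading coefficient gets multiplied by $\rho_i - \rho_{i_0} \ne 0$, since the roots $\rho_i$ are distinct.

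If at least one $P_i$ with $i \ne i_0$ is nonzero, the new relation is nontrivial and strictly smaller in our measure, so the induction hypothesis applies and gives a contradiction. Otherwise only $P_{i_0}$ is nonzero, and the relation reads $P_{i_0}(n)\rho_{i_0}^n = 0$ for all $n$. Since $\rho_{i_0} \ne 0$, we get $P_{i_0}(n) = 0$ for infinitely many $n$, so $P_{i_0} = 0$, a contradiction.

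An alternative for Step 3 is to evaluate at $n = 0, \dots, d-1$ and show that the resulting confluent Vandermonde matrix is nonsingular. The operator argument above avoids computing that determinant, so I would use it.

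Combining the three steps: we have $d$ independent solutions in a $d$-dimensional space, so they form a basis. The explicit description of $V$ as all sequences $s_n = \sum_i p_i(n)\rho_i^n$ with $\deg p_i < e_i$ follows immediately.
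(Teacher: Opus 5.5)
The paper gives no proof of this statement. It records it in the background section as a well-known fact and only uses it later with $F=\qq$, in Proposition~\ref{prop:lonely-Perroney-simple-antimatter}, Lemma~\ref{lem:rational homogeneous linear recurrences} and Theorem~\ref{thm:simple-antimatter-is-valuation}. So there is no argument to compare against, and your shift-operator proof stands on its own. In characteristic zero it is a correct, self-contained proof. Its advantage over the confluent Vandermonde route you mention is that no determinant has to be evaluated: the two identities for $E-\rho$ together with the induction on $\sum_i(\deg P_i+1)$ do all the work. Your observation that $c_d\neq 0$ rules out $\rho_i=0$ is exactly what the final single-term step needs.

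Two points need correcting.

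\emph{The characteristic-zero hypothesis.} The last step of Step~3 (``$P_{i_0}(n)=0$ for infinitely many $n$, hence $P_{i_0}=0$'') silently uses that the integers $n$ are pairwise distinct in $K$, i.e.\ that $F$ has characteristic zero. This is really a defect of the statement, which is false as soon as some $e_i$ exceeds the characteristic. For example, take $F=\mathbb{F}_2$ and $p(x)=(x+1)^3=x^3+x^2+x+1$. The listed sequences $(n)_{n\ge 0}$ and $(n^2)_{n\ge 0}$ coincide, so $\mathcal{B}_{p(x)}$ cannot be a basis of the $3$-dimensional solution space. You should state the characteristic-zero hypothesis explicitly; this is harmless for the paper. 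In arbitrary characteristic the right basis is $\bigl(\binom{n}{j}\rho_i^{\,n}\bigr)_{n\ge 0}$. Your operator argument adapts to it, since $E-\rho$ sends $\binom{n}{j}\rho^n$ to $\rho\binom{n}{j-1}\rho^n$; the final step then uses triangularity at $n=0,1,\dots$ instead of counting roots.

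\emph{The phrase ``over $F$''.} Your sentence about $F$-valued solutions does not actually reconcile this phrase. The members of $\mathcal{B}_{p(x)}$ are $K$-valued and in general not $F$-valued. What you prove, correctly, is that $\mathcal{B}_{p(x)}$ is a $K$-basis of the $K$-valued solutions, so the statement should read ``over $K$''. The $F$-valued solutions do form a $d$-dimensional $F$-subspace, namely those with initial values in $F^d$, but $\mathcal{B}_{p(x)}$ is not a basis of it.
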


\bigskip
\section{Algebraic Considerations}
\label{sec:algebraic considerations}

In this first section of content, we discuss some preliminary algebraic and divisibility properties of the additive monoids under investigation.

\begin{definition}
    We say that a nonconstant polynomial $f(x) \in \qq[x]$ is \emph{simple} if the only pair $(n,g(x)) \in \nn \times \qq[x]$ that satisfies the equality $f(x) = g(x^n)$ is $(1,f(x))$.
\end{definition}

That is, the GCD of the support of a simple polynomial must be $1$ (in~$\nn$). Not every irreducible polynomial in $\qq[x]$ is simple, as we see in the next example. 

\begin{example}
    For instance, as an immediate application of Eisenstein's criterion, we obtain that the polynomial $m(x) := x^d - q \in \qq[x]$ is irreducible for all pairs $(d,q) \in \nn \times \qq_{>0}$ such that the positive $d\textsuperscript{th}$ root of~$q$ is not rational. Observe that $m(x)$ is simple if and only if $d=1$, whence $x^d - p \in \qq[x]$ is a non-simple irreducible polynomial for any pair $(d,p) \in \nn_{\ge 2} \times \pp$. \hfill $\blacksquare$
\end{example}

The following lemma, which will be helpful in the proof of Theorem~\ref{thm:simple-antimatter-is-valuation}, shows that we can arbitrarily increase the length of our antimatter decomposition while keeping the simplicity condition.

\begin{lemma}\label{lemma:big-and-simple-antimatter-BAGA}
    If $\alpha \in \aaa \cap (0,1)$ is a root of a simple polynomial in $x\nn_0[x]-1$ then, for any given $\ell \in \nn$, the parameter $\alpha$ is also a root of a simple polynomial in $x\nn_0[x] - 1$ whose degree is at least $\ell$.
\end{lemma}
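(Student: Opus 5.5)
Let $f(x) = xg(x) - 1 \in x\nn_0[x]-1$ be simple with $f(\alpha)=0$, and set $d = \deg f$. The plan is to build a new polynomial in $x\nn_0[x]-1$ of larger degree from $f$ by a substitution that preserves the antimatter shape, and then to check simplicity.

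The natural substitution is the following. Since $\alpha g(\alpha) = 1$, we can replace the constant term $-1$ by $-(\alpha g(\alpha))^k$ for suitable $k$. Concretely, for $h(x) := xg(x)$ we have $h(\alpha)=1$. For any $k \ge 1$, consider
\[
	F_k(x) := h(x)^k - 1 = (h(x)-1)\big(h(x)^{k-1} + \dots + h(x) + 1\big).
\]
We have $F_k(\alpha) = 0$. Moreover, $h(x)^k$ has nonnegative integer coefficients and zero constant term, so $F_k \in x\nn_0[x]-1$, and $\deg F_k = kd \ge \ell$ once $k$ is large.

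The remaining issue, which I expect to be the main obstacle, is simplicity. Since $F_k(x) + 1 = h(x)^k$, the support of $F_k$ is $\{0\} \cup \supp h^k$. Because $h$ has nonnegative coefficients, $\supp h^k$ is exactly the $k$-fold sumset of $S := \supp h$; there is no cancellation. Simplicity of $f$ means $\gcd(S)=1$, and $S \subseteq \nn$. The gcd of the $k$-fold sumset $kS$ divides every difference $s - s'$ of elements of $S$, since $(k-1)s + s'$ and $ks$ both lie in $kS$. It also divides $ks$. Hence it divides $\gcd\big(\{ks\} \cup \{s-s' : s,s' \in S\}\big)$. This could be a nontrivial divisor of $k$: for example, $S=\{1\}$ gives $h = x$, but then $\alpha = 1$, contradicting $\alpha \in (0,1)$.

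To avoid this, I will choose $k$ coprime to everything relevant. Let $e := \gcd\{s-s' : s,s' \in S\}$. Then $\gcd(S \cup \{e\}) = 1$ and all elements of $S$ are congruent modulo $e$, say to $s_0$, with $\gcd(s_0,e)=1$. The gcd of $kS$ is then $\gcd(ks_0, e) = \gcd(k,e)$, using that $e$ divides differences within $kS$ and that $kS$ has differences generating $e\zz$. So taking $k \equiv 1 \pmod e$ (for instance $k = 1 + me$ with $m$ large) makes $\gcd(\supp F_k) = 1$. This gives a simple polynomial, because a polynomial $F(x) = G(x^n)$ forces $n$ to divide every element of the support. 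If instead $|S| = 1$, then $h = cx^s$ and simplicity forces $s=1$, so $h = cx$. In that case $\alpha = 1/c$ with $c \ge 2$, and $F_k = c^k x^k - 1$ is not simple for $k \ge 2$. This case needs separate treatment: use instead $F(x) = (c-1)x + (c-1)cx^2 + \dots$, that is, expand $1 = (c-1)\alpha + \alpha$ repeatedly to get $1 = (c-1)(\alpha + \alpha^2 + \dots + \alpha^{n}) + \alpha^{n}$. The resulting polynomial $(c-1)(x + \dots + x^n) + x^n - 1$ contains $x^1$, so it is simple and has degree $n \ge \ell$.
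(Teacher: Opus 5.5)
Your proof is correct, but it takes a different route from the paper.

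The paper iterates the single operation $p(x)\mapsto p(x)\bigl(1+x^{\deg p(x)}\bigr)$, which doubles the degree each time. The result stays in $x\nn_0[x]-1$ because the new coefficient of $x^{\deg p(x)}$ is the old leading coefficient minus $1$. For simplicity, the new support keeps $\supp p(x)\setminus\{\deg p(x)\}$ and gains $2\deg p(x)$, so any common divisor must divide $2$. A parity check using some $s\in\supp p(x)\setminus\{0,\deg p(x)\}$ then rules out $2$: when $\deg p(x)$ is odd, $s+\deg p(x)$ is an odd exponent in the new support.

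You instead raise the decomposition $1=h(\alpha)$ to the $k$-th power. Because $\supp h(x)^k$ is exactly the $k$-fold sumset of $S$, with no cancellation, your formula $\gcd(\supp F_k)=\gcd(k,e)$ tells you exactly which powers preserve simplicity. This lets you reach any degree $kd$ with $k\equiv 1 \pmod e$ in one step, rather than only the degrees $2^n\deg p(x)$.

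The cost is that the sumset argument breaks down when $|S|=1$. You handle that case ($f(x)=cx-1$ with $c\ge 2$) correctly and separately, using the telescoped identity $1=(c-1)(\alpha+\dots+\alpha^n)+\alpha^n$. The paper's parity step tacitly assumes $p(x)$ is non-linear, since it needs some $s\in\supp p(x)\setminus\{0,\deg p(x)\}$. So the linear case you isolate is exactly the one the paper passes over, though there it is immediate because $(cx-1)(1+x)=cx^2+(c-1)x-1$.

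One minor point: your first formula $(c-1)x+(c-1)cx^2+\cdots$ is garbled. The polynomial $(c-1)(x+\dots+x^n)+x^n-1$ that you write afterwards is the correct one.
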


\begin{proof}
    Take $\alpha \in \aaa \cap (0,1)$, and let $p(x)$ be a simple polynomial in $x\nn_0[x] - 1$ having $\alpha$ as a root. The existence, for each $\ell \in \nn$, of a simple polynomial of degree at least $\ell$ in $x\nn_0[x] - 1$ having $\alpha$ as a root reduces to proving the following claim.
    \smallskip
    
    \noindent \textsc{Claim.} There exists a sequence $(p_n(x))_{n \ge 0}$ of simple polynomials in $x\nn_0[x] - 1$ that share $\alpha$ as a common root and satisfy $\deg p_n(x) = 2 \deg p_{n-1}(x)$ for every $n \in \nn$.
    \smallskip

    \noindent \textsc{Proof of Claim.} We proceed by induction. For the base case set $p_0(x) := p(x)$ and to argue the induction step, we assume that we have already produced simple polynomials $p_0(x), p_1(x), \dots, p_n(x)$ in $x\nn_0[x] - 1$ with $\alpha$ as a common root such that $\deg p_i(x) = 2 \deg p_{i-1}(x)$ for every $i \in \ldb 1,n \rdb$. Now consider the polynomial
    \[
        p_{n+1}(x) := p_n(x)(1+x^{\deg p_n(x)}),
    \]
    which also has $\alpha$ as a root and belongs to $x \nn_0[x] - 1$. We only need to argue that $1$ is the only positive common divisor of $\supp p_{n+1}(x)$. Note that every element in $\supp p_n(x)$, perhaps save for $\deg p_n(x)$, remains in $\supp p_{n+1}(x)$. This, along with the fact that $2 \deg p_n(x) \in \supp p_{n+1}(x)$, ensures that the only potential common divisors of $\supp p_{n+1}(x)$ in $\nn$ are $1$ and $2$. Thus, we are done once we show that~$2$ is not a common divisor of $\supp p_{n+1}(x)$.

    Assume, towards a contradiction, that each integer in $\supp p_{n+1}(x)$ is even. As $p_n(x)$ is not a monomial, $\supp p_n(x) \setminus \{ \deg p_n(x) \}$ is a nonempty subset of $\supp p_{n+1}(x)$ and, as a consequence, $\supp p_n(x) \setminus \{ \deg p_n(x) \} \subset 2\nn_0$. Thus, the fact that the only positive common divisor of $\supp p_n(x)$ is $1$ guarantees that $\deg p_n(x)$ is odd. As $p_n(x)$ is simple and non-linear, it cannot be a binomial. Hence $\supp p_n(x) \setminus \{0, \deg p_n(x)\}$ is nonempty, and we can pick $s \in \supp p_n(x) \setminus \{0, \deg p_n(x)\}$. As $s$ is even, $s + \deg p_n(x)$ must be an odd integer in $\supp p_{n+1}(x)$, which is a contradiction.
\end{proof}

Next we prove that for any $\alpha \in \aaa$, the monoid $M_\alpha$ is isomorphic to the direct product of finitely many copies of the monoid $M_\rho$ for some $\rho \in \aaa$ whose minimal polynomial is simple.

\begin{proposition}\label{prop:simple-product}
    For $\alpha \in \aaa$ with minimal polynomial $m_\alpha(x) \in \qq[x]$, if $m_{\alpha}(x) = m(x^k)$ for some $k \in \nn$ and a simple polynomial $m(x) \in \mathbb{Q}[x]$, then the monoids $M_\alpha$ and $M_{\alpha^k}^k$ are isomorphic.
\end{proposition}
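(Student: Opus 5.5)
We will construct the isomorphism explicitly. Set $\beta := \alpha^k$. Every element of $M_\alpha$ is $p(\alpha)$ for some $p(x) \in \nn_0[x]$, and we split $p$ by exponent classes modulo $k$:
\[
	p(x) = \sum_{i=0}^{k-1} x^i p_i(x^k), \qquad p_i(x) \in \nn_0[x].
\]
This gives $p(\alpha) = \sum_{i=0}^{k-1} \alpha^i p_i(\beta)$. We define
\[
	\varphi \colon M_\beta^k \to M_\alpha, \qquad (b_0, \dots, b_{k-1}) \mapsto \sum_{i=0}^{k-1} \alpha^i b_i.
\]
It is clearly a monoid homomorphism. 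By the decomposition above it is surjective, since every $p(\alpha)$ is the image of $(p_0(\beta), \dots, p_{k-1}(\beta))$. So everything reduces to injectivity.

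Suppose $\sum_i \alpha^i b_i = \sum_i \alpha^i b_i'$ with $b_i = g_i(\beta)$ and $b_i' = h_i(\beta)$. Then $\alpha$ is a root of $F(x) := \sum_{i} x^i \big(g_i(x^k) - h_i(x^k)\big) \in \zz[x]$. Hence $m_\alpha(x) = m(x^k)$ divides $F(x)$, and we write $F(x) = m(x^k) Q(x)$ with $Q \in \qq[x]$. We decompose $Q(x) = \sum_{i=0}^{k-1} x^i Q_i(x^k)$ the same way. Since multiplication by a polynomial in $x^k$ preserves exponent classes modulo $k$, comparing the residue-class-$i$ parts of both sides gives
\[
	g_i(x^k) - h_i(x^k) = m(x^k) Q_i(x^k)
\]
for each $i$. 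Therefore $g_i(y) - h_i(y) = m(y) Q_i(y)$. Since $m(\beta) = m_\alpha(\alpha) = 0$, we conclude $g_i(\beta) = h_i(\beta)$, that is, $b_i = b_i'$. This proves injectivity.

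The main point to check is that $m(x)$ vanishes at $\beta$, so that divisibility by $m(y)$ really forces $g_i(\beta)=h_i(\beta)$. This is immediate from $m(\alpha^k) = m_\alpha(\alpha) = 0$. Note that this argument uses neither irreducibility of $m$ nor the simplicity hypothesis, only that $m_\alpha(x) = m(x^k)$. Simplicity of $m$ is presumably needed elsewhere, to make the reduction canonical (for instance, so that $m$ is the minimal polynomial of $\alpha^k$). I would also remark that $m$ is irreducible, since a factorization of $m(y)$ would yield one of $m(x^k)$, so $m = m_{\alpha^k}$ up to a scalar; this is not required for the isomorphism itself. The comparison of residue classes of exponents is the only delicate step, and it holds because $\zz[x] = \bigoplus_{i} x^i \zz[x^k]$ as $\zz[x^k]$-modules (and likewise over $\qq$).
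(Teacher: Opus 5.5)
Your proof is correct. It uses the same construction as the paper (split exponents by residue class mod $k$ and match components), but it justifies the key step differently.

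The paper maps in the opposite direction, $\psi\colon M_\alpha\to M_{\alpha^k}^k$, so it must check both well-definedness and injectivity. Both come from its Claim, which rests on the unproved assertion that $1,\alpha,\dots,\alpha^{k-1}$ are linearly independent over $\qq(\alpha^k)$. That assertion is true because $[\qq(\alpha):\qq(\alpha^k)]=\deg m_\alpha/\deg m=k$, using the paper's preliminary remark that $m$ is irreducible and hence the minimal polynomial of $\alpha^k$.

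By defining $\varphi\colon M_\beta^k\to M_\alpha$ instead, you get well-definedness for free. Your divisibility argument through $\qq[x]=\bigoplus_{i=0}^{k-1}x^i\qq[x^k]$ is in effect a self-contained proof of exactly that linear independence, needing only $m(\alpha^k)=0$. Your version also avoids a small wrinkle in the paper: there the $i$-th coordinate $\alpha^i\sum_j a_{jk+i}\alpha^{jk}$ lies in $\alpha^iM_{\alpha^k}$ rather than in $M_{\alpha^k}$, so a rescaling is implicitly needed. Your coordinates $b_i$ lie in $M_\beta$ directly.

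One minor correction to your closing remark. Simplicity is not needed to make $m$ the minimal polynomial of $\alpha^k$; your own irreducibility observation already gives that. Its only role is to fix $k$ canonically as the gcd of the exponents appearing in $m_\alpha(x)$, so that $M_{\alpha^k}$ is the simplified monoid.
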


\begin{proof}
    Fix $\alpha \in \aaa$ with minimal polynomial $m_\alpha(x)$, and assume that $m_\alpha(x) = m(x^k)$ for a pair $(k, m(x)) \in \nn \times \qq[x]$ such that $m(x)$ is simple. From the fact that $m_\alpha(x)$ is an irreducible polynomial in $\qq[x]$, one obtains that $m(x)$ is also an irreducible polynomial in~$\qq[x]$. Therefore, $m(x)$ is the minimal polynomial of~$\alpha^k$. Before proceeding, it is convenient to argue the following.
    \smallskip

    \noindent \textsc{Claim.} Let $A(x)$ and $B(x)$ be polynomials in $\nn_0[x]$ not both constant, and let $d$ be the maximum of the set $\{\deg A(x), \deg B(x)\}$. Write
    \[
        A(x) = \sum_{n=0}^d a_n x^n \quad \text{ and } \quad B(x) = \sum_{n=0}^d b_n x^n
    \]
    for some coefficients $a_0, \dots, a_d$ and $b_0, \dots, b_d$ in $\nn_0$. If $d = qk + r$ for some $q,r \in \nn_0$ with $r \in \ldb 0, k-1 \rdb$, then the following two conditions are equivalent:
    \begin{itemize}
        \item $A(\alpha) = B(\alpha)$;
        \smallskip

        \item $ \sum_{j=0}^q a_{jk+i} \alpha^{jk} = \sum_{j=0}^q b_{jk+i} \alpha^{jk}$ for every $i \in \ldb 0, k-1 \rdb$.
    \end{itemize}
    \smallskip

    \noindent \textsc{Proof of Claim.} We can assume, without loss of generality, that $\deg A(x) \ge \deg B(x)$, in which case, $d = \deg A(x)$. Then
    \[
        A(\alpha) - B(\alpha) = \sum_{n=0}^d (a_n - b_n)\alpha^n = \sum_{j=0}^q \sum_{i=0}^{k-1} (a_{jk+i} - b_{jk+i})\alpha^{jk+i} =  \sum_{i=0}^{k-1} \bigg(\alpha^i \sum_{j=0}^q (a_{jk+i} - b_{jk+i})\alpha^{jk} \bigg),
    \]
    where $a_{qk+i} = b_{qk+i} = 0$ for every index $i \in \ldb r+1, k-1 \rdb$. As $1, \alpha, \dots, \alpha^{k-1}$ are linearly independent over $\qq(\alpha^k)$, we obtain that $A(\alpha) = B(\alpha)$ if and only if $\sum_{j=0}^q(a_{jk+i} - b_{jk+i}) \alpha^{jk} = 0$ for every $i \in \ldb 0, k-1 \rdb$, which is equivalent to the second condition. This establishes the claim.
    \smallskip

    Let us now continue with the proof of the main statement by defining a function $\psi \colon M_\alpha \to M_{\alpha^k}^k$ as follows: for any polynomial $A(x) \in \nn_0[x]$ having degree $d \in \nn_0$ and $a_0, \dots a_d \in \nn_0$ such that
    \begin{equation} \label{eq:polynomial A(x) temporal}
        A(x) = \sum_{n=0}^d a_n x^n,
    \end{equation}
    write $d = qk + r$ for some $q,r \in \nn_0$ with $r \in \ldb 0, k-1 \rdb$, and then set
    \begin{equation} \label{eq:function psi}
        \psi\big(A(\alpha)\big) := \bigg( \sum_{j=0}^q a_{jk} \alpha^{jk}, \alpha  \sum_{j=0}^q a_{jk+1} \alpha^{jk}, \dots, \alpha^{k-1}  \sum_{j=0}^q a_{jk+(k-1)} \alpha^{jk} \bigg)
    \end{equation}
    so that $a_{qk+i} := 0$ for every index $i \in \ldb r+1, k-1 \rdb$ (note that this index set is empty when $r = k-1$). As an immediate consequence of the established claim, for any two given polynomials $A(x)$ and $B(x)$ in $\nn_0[x]$, the equality $A(\alpha) = B(\alpha)$ guarantees that $\psi(A(\alpha)) = \psi(B(\alpha))$, whence $\psi$ is a well-defined function. In addition, it is clear that $\psi$ is a surjective monoid homomorphism. Finally, for any two polynomials $A(x)$ and $B(x)$ in $\nn_0[x]$, the equality $\psi\big(A(\alpha)\big) = \psi\big(B(\alpha)\big)$ is precisely the second condition in the statement of the established claim, and so the equality $A(\alpha) = B(\alpha)$ must hold. Hence $\psi$ is a monoid isomorphism and, therefore, $M_\alpha\cong M_{\alpha^k}^k$.
\end{proof}

In light of~\cite[Example 3.3]{CG22}, for each positive rational $q \in \qq$, the monoid $M_q$ is a valuation monoid if and only if $q \in \nn \cup \nn^{-1}$. Therefore, for each prime $p \in \pp$, 
\begin{equation} \label{eq:valuation p-adic PMs}
    M_{1/p} = \nn_0\Big[ \frac1p \Big] = \Big\{ \frac{n}{p^k} : k,n \in \nn_0 \Big\}
\end{equation}
is a valuation monoid, and so a GCD monoid.

If the monoid $M_q$ is a GCD monoid, then it is either a UFM or a valuation monoid. This is not the case for the class consisting of all monoids $M_\alpha$ parameterized by non-rational algebraic~$\alpha$. The following example not only illustrates this fact, but also shows, as a special case of Proposition~\ref{prop:simple-product}, how to write certain rank-$d$ positive monoids as finite products of rank-one monoids. 

\begin{example} \label{ex:the monoids M_{d,p}}
    For $d \in \nn_{\ge 2}$, we argue that there are infinitely many non-isomorphic rank-$d$ GCD monoids~$M_\alpha$ (with $\alpha \in \aaa$) that are neither UFMs nor valuation monoids. Note that, for each prime $p \in \pp$, the polynomial
    \[
        m_{d,p}(x) := x^d - \frac1p
    \]
    is irreducible in $\qq[x]$, which follows as an immediate consequence of Eisenstein's criterion. Thus, $m_{d,p}(x)$ is the minimal polynomial of the positive $d\textsuperscript{th}$ root $\rho_{d,p}$ of $\frac1p$. To ease notation we write $M_{d,p}$ instead of $M_{\rho_{d,p}}$. Observe that the polynomial $m_{d,p}(x)$ is not simple as $m_{d,p}(x) = m(x^d)$, where $m(x) := x - \frac1p \in \qq[x]$. The polynomial $x^d - q \in \qq[x]$ is the minimal polynomial of $\rho_{d,p}$, while the polynomial $x-q \in \qq[x]$ is simple. In light of Proposition~\ref{prop:simple-product}, we obtain that
    \[
        M_{d,p} \cong M_{1/p}^d = \nn_0\Big[\frac1p \Big]^d,
    \]
    and so $M_{d,p}$ is isomorphic to the direct product of~$d$ copies of the valuation Puiseux monoid $M_{1/p}$. As the direct product of finitely many GCD monoids is again a GCD monoid, $M_{d,p}$ remains a GCD monoid. Next, as $1,\rho_{d,p},\rho_{d,p}^2,\dots,\rho_{d,p}^{d-1}$ are linearly independent over $\qq$, none of these elements divide each other, so $M_{d,p}$ is not a valuation monoid. Finally, $M_{d,p}$ is not factorial as it is antimatter but not a group. \hfill $\blacksquare$.
\end{example}

In light of the relation between $M_{d,p}$ and $M_{1/p}$, we employ the following notation. Given a polynomial $f(x) \in \qq[x]$, we refer to the unique simple polynomial $g(x) \in \qq[x]$ such that $f(x) = g(x^n)$ for some $n \in \nn$ as the \emph{simplified polynomial} of $f(x)$. In addition, for each $\alpha \in \aaa$, we say that the monoid $M_\alpha$ is \emph{simple} if the minimal polynomial of~$\alpha$ is simple. The \emph{simplified monoid} of $M_\alpha$ is then the monoid generated by a root of the simplified polynomial of $m_\alpha(x)$. It is often helpful to restrict our attention to simple $M_\alpha$ as Proposition~\ref{prop:simple-product} shows that a monoid is isomorphic to a product consisting of copies of its simplified monoid.

\bigskip
\section{Antimatterness}
\label{sec:antimatter}

Fix $\alpha \in \aaa$. Some necessary conditions for $M_\alpha$ to be antimatter are provided in~\cite[Proposition 4.5]{CG22}. Here, we provide a full characterization of when $M_\alpha$ is antimatter, which is a more delicate matter. Ultimately, this characterization will allow us to describe the algebraic parameters $\alpha$ for which $M_\alpha$ is a valuation monoid. 

When $M_\alpha$ is atomic, the fact that the factorization monoid $\mathsf{Z}(M_\alpha)$ is a free commutative monoid on either the set $\{\alpha^n : n \in \mathbb{N}_0 \}$ or the set $\{\alpha^n : n \in \ldb 0,k \rdb \}$ for some $k \in \nn_0$ allows us to identify each factorization in $\mathsf{Z}(M_\alpha)$ with a polynomial in $\mathbb{N}_0[x]$. This was first observed in~\cite[Remark 4.3]{CG22}, and from now on we shall use this identification throughout this paper without explicit mention.

We begin by showing that antimatterness entails $m_\alpha(x)$ having a positive root that is small relative to its other roots. Specifically, we measure this magnitude by the standard Euclidean norm.

\begin{proposition}\label{prop:antimatter-Perron-number}
    For $\alpha \in \aaa_{>0}$, let $M_\alpha$ be antimatter. Then the following statements hold.
    \begin{enumerate}
        \item $\alpha$ is the only positive root of $m_\alpha(x)$.
        \smallskip

        \item Each complex root of $m_\alpha(x)$ is at least $\alpha$ in norm.
    \end{enumerate}
\end{proposition}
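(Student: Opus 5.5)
Since $M_\alpha$ is antimatter, $1$ is not an atom, so $1 = a + b$ with $a, b$ nonzero elements of $M_\alpha$. I would first extract an antimatter decomposition: $1 = \sum_{n \ge 1} c_n \alpha^n$ for some $c_n \in \nn_0$, not all zero. To get it, write $a = A(\alpha)$ and $b = B(\alpha)$ with $A(x), B(x) \in \nn_0[x]$. If either polynomial had a positive constant term $c$, then since $\alpha > 0$ and $a, b > 0$ we would get $c < 1$, which is impossible for an integer $c \ge 1$ (the case $c=1$ would force the other summand to vanish). Hence $A(x) + B(x) = x g(x)$ for some nonzero $g(x) \in \nn_0[x]$. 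So $f(x) := xg(x) - 1 \in x\nn_0[x] - 1$ has $\alpha$ as a root, and $m_\alpha(x)$ divides $f(x)$ in $\qq[x]$. (This argument only uses that $1$ is not an atom.)

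For (1), I would apply Descartes' rule of signs to $f(x)$. Its coefficients are $-1$ in degree $0$ and nonnegative in every positive degree, so there is exactly one sign variation. Hence $f(x)$ has exactly one positive root, counted with multiplicity. Every root of $m_\alpha(x)$ is a root of $f(x)$, and $\alpha$ is a simple root of $m_\alpha(x)$ because minimal polynomials are separable over $\qq$. It follows that $\alpha$ is the only positive root of $m_\alpha(x)$.

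For (2), let $\beta$ be any complex root of $m_\alpha(x)$; then $f(\beta) = 0$, so
\[
  1 = \Big|\sum_{n \ge 1} c_n \beta^n\Big| \le \sum_{n \ge 1} c_n |\beta|^n .
\]
The function $t \mapsto \sum_{n \ge 1} c_n t^n$ is strictly increasing on $\rr_{\ge 0}$ and equals $1$ at $t = \alpha$. If $|\beta| < \alpha$, the right-hand side would be strictly less than $1$, a contradiction. Therefore $|\beta| \ge \alpha$.

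The only step needing real care is the first: producing a decomposition of $1$ itself with no constant term, which is where the positivity of $\alpha$ is used. The remaining steps are direct applications of Descartes' rule and the triangle inequality.
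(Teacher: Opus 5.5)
Your proposal is correct and follows essentially the paper's argument: you extract $f(x) \in x\nn_0[x]-1$ with $f(\alpha)=0$, apply Descartes' rule of signs for (1), and use a triangle-inequality comparison for (2). The differences are minor: you derive the decomposition of $1$ directly from positivity of $\alpha$ rather than citing \cite[Theorem~4.2]{CG22}, and you bound moduli using $f$ itself rather than the reciprocal polynomial $-x^{\deg f}f(x^{-1})$, which saves the final reciprocation step.
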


\begin{proof}
    (1) By~\cite[Theorem 4.2]{CG22}, $M_\alpha$ is antimatter if and only if $1$ is not an atom. As a result, there exists $f(x) \in x\mathbb{N}_0[x]-1$ having $\alpha$ as a root, which represents our antimatter decomposition. In particular, $f(x)+1$ can be identified with a factorization of~$1$ whose terms consist only of nonconstant powers of~$x$. Since $f(x)$ has precisely one variation in sign, Descartes' Rule of Signs ensures that it has one positive root. In addition, the rule asserts that $\alpha$ has multiplicity one, i.e., it is a simple root. As each root of $m_\alpha(x)$ is also one of $f(x)$, this also holds for $m_\alpha(x)$.
    \smallskip

    (2) Let us now consider the negative reciprocal polynomial $g(x)$ of $f(x)$, namely
    \[
        g(x) := -x^{\deg f(x)}f(x^{-1}).
    \]
    Take $c_0, c_1, \dots, c_{d-1} \in \nn_0$ such that
    \[
        g(x) = x^d - \sum_{i=0}^{d-1} c_ix^i.
    \]
    If $\gamma \in \mathbb{C}$ with $\lvert\gamma\rvert > \alpha^{-1}$ then $g(|\gamma|)>0$. From the fact that
    \[
        |\gamma^n| \ > \sum_{i=0}^{d-1}c_i |\gamma|^i \ \ge \ \left| \sum_{i=0}^{d-1}c_i\gamma^i \right|,
    \]
	we obtain that $\gamma$ cannot be a root of $g(x)$. Hence all roots of $g(x)$, and hence of $r_\alpha(x)$, are at most $\alpha^{-1}$ in norm. Reciprocating yields that all roots of $m_\alpha(x)$ are at least $\alpha$ in norm. Finally, note that this becomes strict when $M_\alpha$ is simple.
\end{proof}

It is known that, for $q \in \mathbb{Q}$, if the monoid $M_q$ is antimatter then $q^{-1}$ is an integer. Our next goal is to generalize this necessary condition for any $\alpha \in \aaa$. This generalization may also be seen as an easily verifiable necessary condition for the monoid $M_\alpha$ to be antimatter. Given $\alpha \in \aaa$ with minimal polynomial $m_\alpha(x)$, recall that $c_\alpha$ is the unique positive integer such that $c_\alpha m_\alpha(x)$ is a primitive integer polynomial. In particular, we set
\[
    w_\alpha(x) := c_\alpha m_\alpha(x) \in \mathbb{Z}[x].
\]

\begin{proposition}\label{prop:antimatter-algebraic-integer}
    For each $\alpha \in \aaa$, if $M_\alpha$ is antimatter then $w_\alpha(0) = -1$ (equivalently, $\alpha^{-1}$ is an algebraic integer).
\end{proposition}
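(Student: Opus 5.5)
Assume $M_\alpha$ is antimatter. By \cite[Theorem~4.2]{CG22}, as in the proof of Proposition~\ref{prop:antimatter-Perron-number}, $1$ is not an atom of $M_\alpha$. So $\alpha$ is a root of some $f(x) \in x\nn_0[x] - 1$, that is, $f(x) = -1 + \sum_{i=1}^d a_i x^i$ with $a_i \in \nn_0$ and $d \ge 1$. We may take $\alpha \neq 0$, since otherwise $M_\alpha = \nn_0$, which is not antimatter. Because $f$ has a root, it is nonconstant, and $f(\alpha)=0$ gives $m_\alpha(x) \mid f(x)$ in $\qq[x]$.

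The main tool is Gauss's lemma over $\zz$. Note that $f(x)$ is primitive in $\zz[x]$, because its constant coefficient is $-1$. Write $f(x) = w_\alpha(x) h(x)$ with $h(x) \in \qq[x]$. Since $w_\alpha(x)$ is primitive, the standard consequence of Gauss's lemma (a primitive divisor of an integer polynomial in $\qq[x]$ has an integer cofactor) gives $h(x) \in \zz[x]$. Concretely, write $h = \frac{r}{s}h_0$ with $h_0$ primitive. Then $w_\alpha h_0$ is primitive, and $s f = r\, w_\alpha h_0$ forces $s \mid r$ by comparing contents.

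Now compare constant terms: $-1 = f(0) = w_\alpha(0)\, h(0)$ with both factors integers. Hence $w_\alpha(0) \in \{1,-1\}$. It remains to fix the sign, and this is the only mildly delicate step. The polynomial $w_\alpha$ is only determined up to the normalization chosen by $c_\alpha > 0$ together with $m_\alpha$ monic, so its leading coefficient $c_\alpha$ is positive.

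To get the sign, I would use Proposition~\ref{prop:antimatter-Perron-number}(1) in the case $\alpha$ is a positive real number. Then $\alpha$ is the unique positive root of $m_\alpha$ and is simple, so $w_\alpha$ changes sign exactly once on $(0,\infty)$. Since $w_\alpha(x) \to +\infty$ as $x\to\infty$, we get $w_\alpha(0) < 0$, hence $w_\alpha(0) = -1$. For general $\alpha \in \aaa$ (nonreal, say), the sign issue would be handled as follows:
\begin{itemize}
\item[$\bullet$] either interpret the statement up to sign, through the parenthetical equivalence;
\item[$\bullet$] or use that $f$ has the unique positive root $\beta$ and compare the real-root structure of $f = w_\alpha h$.
\end{itemize}
I expect only the positive case is intended, matching the section's standing assumption.

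Finally, for the equivalence: $w_\alpha(0) = \pm 1$ means the reciprocal polynomial $x^d w_\alpha(1/x)$ has leading coefficient $\pm1$ and integer coefficients. So $\alpha^{-1}$ is a root of a monic integer polynomial after multiplying by $\pm1$, i.e., $\alpha^{-1}$ is an algebraic integer. Conversely, if $\alpha^{-1}$ is an algebraic integer, then its minimal polynomial, the normalized reciprocal of $w_\alpha$, is monic in $\zz[x]$, which forces $w_\alpha(0) = \pm 1$.
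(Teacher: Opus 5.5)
Your proof is correct and follows essentially the same route as the paper: the antimatter decomposition $f\in x\nn_0[x]-1$ from \cite[Theorem~4.2]{CG22}, then Gauss's lemma to get $w_\alpha(0)\mid f(0)=-1$, then uniqueness of $\alpha$ as a positive root (Proposition~\ref{prop:antimatter-Perron-number}(1)) to fix the sign, where your continuity/sign-change argument replaces the paper's Descartes parity count. Restricting to $\alpha>0$ is exactly what the paper's proof also tacitly does, and the restriction cannot be removed: $M_i=\zz[i]$ is antimatter and $i$ is a root of $x^4-1\in x\nn_0[x]-1$, yet $w_i(0)=1$, so your second bullet cannot succeed; and for $\alpha=-2$ the group $M_{-2}=\zz$ is antimatter with $w_{-2}(0)=2$, while no polynomial in $x\nn_0[x]-1$ vanishes at $-2$.
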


\begin{proof}
    Since $1 \notin \mathcal{A}(M_\alpha)$, \cite[Theorem~4.2]{CG22} guarantees a nonzero polynomial $g(x) \in x\mathbb{N}_0[x]-1$ having $\alpha$ as a root. Hence $g(x)$ is a multiple of the minimal polynomial of $\alpha$, so we may write $g(x) = q(x) m_\alpha(x)$ for some polynomial $q(x) \in \mathbb{Q}[x]$. Thus, $g(x) = q(x) w_\alpha(x) / c_\alpha$, and, as $g(x) \in \mathbb{Z}[x]$, it follows from Gauss's lemma that the content of $q(x)$ is $c_\alpha$. Further, we can write $g(x) = Q(x)w_\alpha(x)$, where $Q(x) := q(x)/c_\alpha$ is a primitive integer polynomial. Hence $w_\alpha(0) \mid g(0) = -1$, which implies that $w_\alpha(0) \in \{-1, 1\}$.

    However, $w_\alpha(0)=1$ would imply that the last term is positive, forcing the number of sign changes to be even---both the leading coefficient and the constant would be positive, so every sign change from positive to negative would be paired with one in the opposite direction. By Descartes' Rule of Signs, $m_\alpha(x)$ would then have an even number of positive roots, which contradicts the uniqueness of $\alpha$ as a positive root. Hence, $w_\alpha(0)=-1$.
\end{proof}



Perron numbers, first introduced in the context of Perron-Frobenius theory of nonnegative matrices and their spectral properties~\cite{dL84}, play a central role in the rest of this section, where we establish characterizations of the antimatter property for the monoids $M_\alpha$.

\begin{definition}
    A \emph{Perron number} is a real algebraic integer greater than $1$ that exceeds each of its algebraic conjugates in norm.
\end{definition}

Whether the monoid $M_\alpha$ is antimatter is closedly connected with the fact that $\alpha^{-1}$ is a Perron number. 
In order to provide a characterization of the antimatter condition for monoids $M_\alpha$, Perron numbers do not fit the bill entirely as they must be strictly greater in norm than their conjugates, while a necessary condition in Proposition~\ref{prop:antimatter-Perron-number} lacks a strict inequality. On the other hand, when $M_\alpha$ is simple, multiple roots of the same maximal modulus cannot exist~\cite[Theorem]{dB94}. As we shall see, if one restricts $\alpha$ to lie on $(0,1)$, we will be able to characterize the simple antimatter monoids $M_\alpha$ by the conditions that $\alpha^{-1}$ is a Perron number having no positive conjugate. Considering the simplified monoid of $M_\alpha$ is reasonable as the property of being antimatter is preserved under products, as in Proposition~\ref{prop:simple-product}.

As the next proposition indicates, for any algebraic $\alpha \in (0,1)$ such that $\alpha^{-1}$ is a Perron number with no positive conjugate aside from itself, there exists $h(x) \in \mathbb{Z}[x]$ so that $1+h(x)w_\alpha(x)\in x\mathbb{N}_0[x]$. Then, after taking $x = \alpha$, we can write $1=f(\alpha)$ for some polynomial $f(x)\in x\mathbb{N}_0[x]$, and the fact that $f(\alpha)$ involves at least two noninvertible elements yields a nontrivial factorization of $1$. 



\begin{proposition} \label{prop:lonely-Perroney-simple-antimatter}
    For $\alpha\in \aaa \cap (0,1)$, if $\alpha^{-1}$ is a Perron number with no positive conjugate aside from itself then there exists a polynomial $h(x) \in \mathbb Z[x]$ such that $h(x)w_\alpha(x) \in x\mathbb N_0[x] - 1$ and is also simple.
\end{proposition}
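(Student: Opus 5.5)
The plan is to pass to $\beta := \alpha^{-1}$ and prove an additive statement in $\zz[\beta]$. Specifically, I would find $N \in \nn$ and integers $c_0, \dots, c_{N-1} \in \nn$, all \emph{strictly positive}, such that
\[
  \beta^N = \sum_{i=0}^{N-1} c_i \beta^i .
\]
Dividing by $\beta^N$ gives $1 = \sum_{i=1}^{N} c_{N-i}\alpha^i$. Hence $F(x) := -1 + \sum_{i=1}^N c_{N-i} x^i$ lies in $x\nn_0[x]-1$ and has $\alpha$ as a root.

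Since $m_\alpha(x) \mid F(x)$ in $\qq[x]$, we can write $F(x) = h(x) w_\alpha(x)$ with $h(x) \in \qq[x]$. As $w_\alpha(x)$ is primitive and $F(x)$ has content $1$, Gauss's lemma gives $h(x) \in \zz[x]$. Because every $c_i$ is positive, $\supp F(x) = \ldb 0, N \rdb$ contains $1$, so $F(x)$ is simple. (Alternatively, simplicity could be recovered afterwards with the doubling trick from the proof of Lemma~\ref{lemma:big-and-simple-antimatter-BAGA}.)

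To produce the representation I would use a cone argument. Embed $K = \qq(\beta)$ into $\rr^{d}$ through its real and complex places, where $d = \deg \beta$; this embedding is the map $\sigma$ below. Since $\beta$ is an algebraic integer, $\Lambda := \zz[\beta] = \bigoplus_{i<d} \zz\beta^i$ is a full lattice in this space. Let $C$ be the closed convex cone generated by $\{\sigma(\beta^n) : n \ge 0\}$, and let $e$ be the unit vector of the coordinate belonging to the real place $\beta$.

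Because $\beta$ is Perron, $\sigma(\beta^n)/\beta^n \to e$ exponentially fast. The key claim is that $e$ lies in the interior of $C$. Granting it, $\sigma(\beta^N)$ lies in the interior of the cone $C_{N_0}$ generated by $\beta^0, \dots, \beta^{N_0 - 1}$ for some fixed $N_0 \ge d$, and its distance to the boundary of $C_{N_0}$ grows like $\beta^N$. A standard lemma then applies: lattice points lying sufficiently deep inside a cone spanned by finitely many lattice vectors that generate $\Lambda$ are $\nn_0$-combinations of those vectors. This lemma follows by reducing modulo the fundamental parallelepiped.

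This gives the representation for large $N$. To force all coefficients positive, I would first subtract $\sum_{i<N} \beta^i$ from $\beta^N$; since this sum is $O(\beta^N)$ with small constant relative to $\beta^N$, the difference still lies deep in the interior. Then I apply the lemma to the difference and add back $1$ to each coefficient. Enlarging the generating set from $\beta^0, \dots, \beta^{N_0-1}$ to $\beta^0, \dots, \beta^{N-1}$ only helps.

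The main obstacle is the interior claim, and this is exactly where ``no positive conjugate other than $\beta$'' is used. Suppose $e \in \partial C$. Then some nonzero real functional $\varphi$ satisfies $\varphi(e) = 0$ and $\varphi(\sigma(\beta^n)) \ge 0$ for all $n$. Writing $\varphi(\sigma(\beta^n)) = \sum_{j \ge 2} \lambda_j \beta_j^n$, a real sequence with coefficients that are not all zero, the generating function $\sum_n \varphi(\sigma(\beta^n)) z^n = \sum_j \lambda_j/(1 - \beta_j z)$ has nonnegative coefficients. By Pringsheim's theorem its radius of convergence, which equals $1/\max|\beta_j|$ over the $j$ with $\lambda_j \ne 0$, is a singularity on the positive real axis. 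This forces some conjugate $\beta_j \neq \beta$ to be a positive real number, contradicting the hypothesis.

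A minor point needs care. If $\beta$ has a negative conjugate or conjugate pairs of the same modulus, the $\lambda_j$ for a pair $\beta_j, \overline{\beta_j}$ are complex conjugates, so the sequence is real and Pringsheim applies without change. I expect this step and the deep-interior lattice lemma to carry the real content. The other steps, namely the Perron convergence $\sigma(\beta^n)/\beta^n \to e$ and the Gauss lemma step, should be routine.
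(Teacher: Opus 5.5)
Your core argument is correct and takes a genuinely different route from the paper's. The paper quotes Handelman's theorem \cite{dH92} to get an $N$ for which $(x+1)^N r_\alpha(x)$ has exactly one sign change. It then builds $h$ coefficient by coefficient from the linear recurrence with that characteristic polynomial, and uses a Cramer's-rule computation to show that the coefficient $B_1$ of the dominant term $\alpha^{-n}$ is positive. Finally it uses the single sign change to make the top $d$ coefficients of the product positive, which is also where it gets simplicity.

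You replace both the citation and the determinant computation with convex geometry. Your supporting-functional argument is sound: $\varphi(e)=0$ removes the $\beta$-term, and $\varphi\neq 0$ together with a Vandermonde argument makes $(u_n)$ nonzero. The poles $1/\beta_j$ are distinct, so the radius is $1/\max\{|\beta_j|:\lambda_j\neq 0\}$, and Pringsheim then produces a positive conjugate $\beta_j\neq\beta$. In effect this is a self-contained proof of the dual form of Handelman's criterion. The conductor/parallelepiped lemma then turns interiority into an $\nn_0$-representation.

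What this buys is a short, conceptual proof with no external theorem and no delicate asymptotic bookkeeping. The price is that it is non-effective: you get no explicit $h$ and no degree bound, whereas the paper's recurrence is an algorithm once $N$ is known. One step deserves a sentence: $e\in\operatorname{int}C$ implies $e\in\operatorname{int}C_{N_0}$ for some finite $N_0$. This holds because a convex set and its closure have the same interior, so a small simplex around $e$ has vertices that are finite nonnegative combinations of the $\sigma(\beta^n)$.

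Your positivity step, however, is wrong as written. The sum $\sum_{i<N}\beta^i=(\beta^N-1)/(\beta-1)$ is small relative to $\beta^N$ only when $\beta>2$. For $\beta<2$ it exceeds $\beta^N$ once $N$ is large, so $\beta^N-\sum_{i<N}\beta^i$ has negative coordinate at the place $\beta$ and does not even lie in $C$. For the golden ratio, $\varphi^N-\sum_{i<N}\varphi^i=\varphi-\varphi^{N-1}<0$ for $N\ge 3$. More generally, for $\beta<2$ no identity $\beta^N=\sum_{i<N}c_i\beta^i$ with all $c_i\ge 1$ can hold for large $N$, since it would force $\beta^N(2-\beta)\le 1$. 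So your stated target is too strong.

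Your fallback does not help either. The doubling trick of Lemma~\ref{lemma:big-and-simple-antimatter-BAGA} preserves simplicity but cannot create it, because $G(x^k)\bigl(1+x^{k\deg G(x)}\bigr)$ is again a polynomial in $x^k$.

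The repair is easy within your framework: you only need to subtract a bounded or lower-order amount. For instance, $\sigma(\beta^N-\beta^{N-1})/\beta^N\to(1-\beta^{-1})e\in\operatorname{int}C_{N_0}$. So for large $N$ with $N-1\ge N_0$, the lattice lemma gives $\beta^N=\beta^{N-1}+\sum_{i<N_0}c_i\beta^i$ with $c_i\in\nn_0$. Then $F(x)=-1+x+\sum_{i<N_0}c_ix^{N-i}$ has $0,1\in\supp F(x)$ and is simple. Subtracting the fixed element $\sum_{i<N_0}\beta^i$ also works: it puts $N-N_0+1,\dots,N$ into the support.
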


\begin{proof}
    The conditions on $\alpha^{-1}$ are precisely those specified in~\cite[Theorem 5(i)]{dH92}, and they guarantee that for all sufficiently large $N \in \nn$, multiplying $(x+1)^N$ by the reciprocal polynomial of $w_\alpha(x)$ yields a polynomial with precisely one sign change. Fix some large enough $N \in \nn$, and then let $r(x)$ denote the resulting polynomial and $u(x)$ the reciprocal polynomial of $r(x)$. Then set
    \[
        d := \deg m_\alpha(x) \quad \text{ and } \quad D:=\deg r(x) = \deg u(x) = d+N.
    \]
    Let $(a_n)_{n \ge 0}$ be a sequence of integers, and let us find an index $k \in \nn$ such that
    \[
        h(x) = a_0+a_1x+\cdots+a_kx^k=\sum_{i=0}^ka_ix^i.
    \]
    Set $f(x) := h(x)w_\alpha(x)$. We begin by choosing $a_0$ through $a_{D-1}$ so that the coefficients of $f(x)$ at each of $1,x,\dots,x^{D-1}$ are zero. This can be done directly; since $u(0)=-1$, one may add or subtract copies of $x^iu(x)$ as needed in order to zero the coefficient of $x^i$. For instance, $a_0=1$ in order to set the constant term of $f(x)$ equal to $0$. Moreover, no terms of higher degree in $h(x)$ would affect previous coefficients, meaning a direct algorithm suffices.

    As $\deg r(x) = D$, we can take $c_0, c_1, \dots, c_D \in \mathbb{Z}$ such that $r(x) = \sum_{i=0}^Dc_ix^i$ and then let the coefficients $a_D$ and onward satisfy
    \[
        \sum_{i=0}^D c_{D-i} a_{n-i} = 0
    \]
    for $n \geq D$. Note that $r(x)$ remains monic, so $c_D=-1$. Hence $a_n = \sum_{i=1}^D c_{D-i} a_{n-i}$,
    meaning each term in the sequence is an integer. Observe also that the coefficients of this linear recurrence are chosen so that taking $h(x)=h_n(x)$ would produce some $f(x)$ for which every term with exponent at most $n$ would have a coefficient equal to zero. It may be that at some truncation, not all coefficients of $f(x)$ are nonnegative, but we shall show that they eventually are.

    Using our background on such recurrences, we now find an explicit formula for $a_n$. Let $r_1,\dots,r_d$ be the roots of $r(x)$ other than $-1$, where the root $-1$ has multiplicity~$N$. Therefore our explicit formula is given by
    \[
        a_n=B_1r_1^n+B_2r_2^n+\cdots+B_dr_d^n+C(n)(-1)^n,
    \]
    where $\deg C(n)<N$. Then we can use the fact that the binomial coefficients form a $\zz$-basis for the ring of integer-valued polynomials to write
    \[
        C(x) = C_0\binom{-x}{0}-C_1\binom{-x}{1}+C_2\binom{-x}{2}-\cdots+(-1)^{N-1}C_{N-1}\binom{-x}{N-1}\in\mathbb{Q}[x],
    \]
    the rationale for which will soon be made clear.

    Without loss of generality, let us set $r_1 := \alpha^{-1}$. As the magnitude of $r_1$ strictly exceeds the magnitude of any other root of $r(x)$, the value of $a_n$ will be dominated by $B_1\alpha^{-n}=B_1r_1^n$ for large $n \in \nn$ so long as $B_1 \neq 0$. Thus, proving that later terms are all positive amounts to showing that $B_1\in\mathbb{R}_{>0}$. Using linear algebra, we may find an exact value for $B_1$ in terms of the other roots. First, by applying our recurrence in the backward direction, we obtain the equations \begin{align*}
        a_0=1&=B_1+B_2+\cdots+B_d+C_0,\\
        a_{-1}=0&=\frac{B_1}{r_1}+\frac{B_2}{r_2}+\cdots+\frac{B_d}{r_d}-C_0+C_1,\\
        a_{-2}=0&=\frac{B_1}{r_1^2}+\frac{B_2}{r_2^2}+\cdots+\frac{B_d}{r_d^2}+C_0-2C_1+C_2,\\
        &\ \:\vdots\\[-0.51em]
        a_{-{D-1}}=0&=\frac{B_1}{r_1^{D-1}}+\frac{B_2}{r_2^{D-1}}+\cdots+\frac{B_d}{r_d^{D-1}}+\sum_{n=0}^{N-1}\binom{D-n}{n}(-1)^{-(D-1-n)}C_n,
    \end{align*}
    where for uniformity we use $r_1$ instead of $\alpha^{-1}$. In matrix form, our coefficients correspond to \vspace*{0.34em}

    \resizebox{0.97\textwidth}{!}{$\displaystyle
        \mathbf{D} = \begin{bmatrix}
            1            & 1            & \cdots  & 1            & 1             & 0                  & 0                           & \cdots  & 0 \\
            r_1^{-1}     & r_2^{-1}     & \cdots  & r_d^{-1}     & -1            & 1                  & 0                           & \cdots  & 0 \\
            r_1^{-2}     & r_2^{-2}     & \cdots  & r_d^{-2}     & 1             & -2                 & 1                           & \cdots  & 0 \\
            \vdots       & \vdots       & \ddots & \vdots       & \vdots        & \vdots             & \vdots                      & \ddots & \vdots \\
            r_1^{-(D-1)} & r_2^{-(D-1)} & \cdots  & r_d^{-(D-1)} & (-1)^{-(D-1)} & (D-1)(-1)^{-(D-2)} & \binom{D-2}{2}(-1)^{-(D-3)} & \cdots & \binom{D-N+1}{N-1}(-1)^{-(D-N)}
        \end{bmatrix},
    $} \vspace*{0.34em}
    
    \noindent which yields
    \[
        \displaystyle\mathbf{D}\begin{bmatrix}
            B_1 &
            B_2 &
            \cdots &
            B_n &
            C_0 &
            C_1 &
            C_2 &
            \cdots &
            C_{N-1}
        \end{bmatrix}^\top=\begin{bmatrix}
            1 &
            0 &
            \cdots &
            0
        \end{bmatrix}^\top.
    \]
    It follows from Cramer's rule that $B_1 = \det\mathbf{N}/\det\mathbf{D}$, where $\mathbf{N}$ is the matrix that results from substituting the column vector on the right-hand side of the above equation into the leftmost column of $\mathbf{D}$. 
    Therefore
    \[
        \det\mathbf{D}=\;\prod_{\mathclap{1\leq i<j\leq d}}\;(r_j^{-1}-r_i^{-1})\cdot\prod_{j=1}^d(1+r_j^{-1})^N.
    \]
    As $\det\mathbf{N}$ is obtaind from $\det\mathbf{D}$ in the limit $r_1 \to \infty$ on both sides of the previous identity, we see that \[
        \det\mathbf{N}=\;\prod_{\mathclap{2\leq i<j\leq d}}\;(r_j^{-1}-r_i^{-1})\cdot\prod_{j=2}^dr_j^{-1}\cdot\prod_{i=2}^d(1+r_i^{-1})^N.
    \] 
    We can rewrite $\det\mathbf{D}$ in a similar form.
    \[
        \det\mathbf{D}=\prod_{\mathclap{2\leq i<j\leq d}}\;(r_j^{-1}-r_i^{-1})\cdot\prod_{j=2}^d(r_j^{-1}-r_1^{-1})\cdot\prod_{i=2}^d(1+r_i^{-1})^N\cdot(1+r_1^{-1})^N.
    \]
    Thus, 
    \[
        \frac{\det\mathbf{N}}{\det\mathbf{D}}=\frac{\displaystyle\prod_{j=2}^dr_j^{-1}}{\displaystyle\prod_{j=2}^d(r_j^{-1}-r_1^{-1})\cdot(1+r_1^{-1})^N}.
    \]
    Note that $(1-r_1^{-1})^N$ is positive because $r_1^{-1}=\alpha<1$. Then it suffices to focus on the remaining piece. In addition,
    \[
        \frac{\displaystyle\prod_{j=2}^dr_j^{-1}}{\displaystyle\prod_{j=2}^d(r_j^{-1}-r_1^{-1})}=\prod_{j=2}^d(1-r_j/r_1)^{-1}.
    \]
    If $r_j$ is negative then $1-r_j/r_1$ is positive. For the remaining complex conjugate pairs, observe that $\overline{1-r_j/r_i}=1-\overline{r_j}/r_1$ because $r_1$ is real. Hence the contribution from that pair is the norm of a nonzero complex number, which is also positive. Thus, we conclude that $B_1\in\mathbb{R}_{>0}$.

    As the roots are being exponentiated, the value of $a_n$ will be dominated by $B_1r_1^n$ for sufficiently large $n$ because $r_1=\alpha^{-1}$ is the strictly largest root by norm. We are now in the position to show that $f(x) \in \nn_0[x]$, which would complete the proof. It is only past this point that we make use of the fact that $r(x)$ has precisely one sign change.

    Recall that $h_k(x)$ is the truncation that includes only terms with exponents at most $k$. Letting $b_n$ denote the coefficient of $x^n$ in $f(x)$, we must prove that $b_n\geq0$ when $n=k+i$ for any $i\in\llbracket1,d\rrbracket$, as all other coefficients are zero. In particular, we will actually show that $b_n>0$ for those $d$ values of $n$, in order to demonstrate that $f(x)$ is simple. The cases for $i=1$ and $i>1$ will be treated distinctly. In either case, however, we will demonstrate that there exists some large enough $k$ for which the relevant coefficient is positive, and by choosing $k$ larger than each of those bounds, we will have satisfied the criteria.

    First, $b_{k+1}=a_{k+1}$ by definition. Moreover, there must exist an index $k \in \nn$ large enough for which $a_{k+1}$ is positive. It is clear that $b_n = \sum_{j=0}^{d-i} c_j a_{k-d+i+j}$ for every $n \in \nn$. Now take $p \in \llbracket1,d-1\rrbracket$ satisfying $c_j \geq 0$ for $j<p$ and $c_j \leq 0$ for $j \geq p$ so as to encode the position of the singular sign change in some sense. There may be multiple potential~$p$ if there is a gap in the support of $r(x)$, in which case any such~$p$ would suffice. Clearly, for $i > d-p$, we will be summing only nonnegative terms, as $j \le d-i < d-(d-p)=p$, so each coefficient $c_j \ge 0$---moreover, we may choose $k \in \nn$ sufficiently large so that $a_{k-d+i+j} > 0$.

    We can restrict our attention to $i\leq d-p$, whence $b_n\geq\sum_{j=0}^{d-1}c_ja_{k-d+i+j}$ because the new sum incorporates only nonnegative terms. In addition,
    \[
        \sum_{j=0}^{d-1}c_ja_{k-d+i+j}=\sum_{j=0}^dc_ja_{k-d+i+j}-c_da_{k+i}.
    \]
    The second term on the right-hand side is positive, while the first term on the right-hand side appears similar to an evaluation of $r(x)$. In particular, as $a_n\approx B_1r_1^n$ in a way that will be made rigorous later, then
    \begin{align*}
        \sum_{j=0}^dc_ja_{k-d+i+j} \approx\sum_{j=0}^dc_jB_1r_1^{k-d+i+j} =B_1r_1^{j-d+i}\sum_{j=0}^dc_jr_1^j =B_1r_1^{j-d+i}r(r_1) =0.
    \end{align*}
    Hence, for sufficiently large $k \in \nn$, the summation nears $0$ while the other term of $-c_da_{k+i}$ grows without bound, so $b_n$ does as well. We prove this more carefully by considering the deviations between terms and their asymptotic approximations. Suppose the inequality 
    \[
        1-x<\frac{a_{k-\ell}}{B_1r_1^{k-\ell}}<1+x
    \]
    holds for each $\ell \in \llbracket0, d-1\rrbracket$. Then, the absolute value of the summation is bounded above by $\displaystyle (d+1)a_{k+i}\max_{0\leq j\leq d}c_j(1-(1-x)^d)$, while the other term is $-c_da_{k+i}$. As $\displaystyle (d+1)\max_{0\leq j\leq d}c_j$, and $-c_d$ are positive constants, $-c_da_{k+i}$ must have the larger magnitude for sufficiently small values of~$x$, which is certainly attainable by simply increasing~$k$. In fact, as each of these terms will be positive, our new polynomial will actually be simple. For instance, $b_{k+1}$ and $b_{k+2}$ will both be nonzero, which implies that the greatest common divisor of the support must be $1$. Hence we have found such a simple $f(x)$ satisfying the desired conditions.
\end{proof}

Now we just need to put together Propositions~\ref{prop:antimatter-Perron-number}, \ref{prop:antimatter-algebraic-integer},~and \ref{prop:lonely-Perroney-simple-antimatter} to obtain the main result of this section, which are the following two characterizations of the simple antimatter monoids $M_\alpha$ in terms of the algebraic parameter~$\alpha$.

\begin{theorem}\label{thm:antimatter-exact}
    For any $\alpha \in \aaa \cap (0,1)$ with minimal polynomial $m_\alpha(x)$, the following conditions are equivalent.
    \begin{enumerate}
        \item [(a)] $M_\alpha$ is a simple antimatter monoid.
        \smallskip

        \item [(b)] $\alpha^{-1}$ is a Perron number and has no positive conjugate aside from itself.
        \smallskip

        \item [(c)] $\alpha^{-1}$ is a Perron number and $m_\alpha(x)$ has a simple multiple $p(x)\in x\mathbb{N}_0[x]-1$.
    \end{enumerate}
\end{theorem}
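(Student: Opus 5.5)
I will prove the cycle (a) $\Rightarrow$ (b) $\Rightarrow$ (c) $\Rightarrow$ (a). Throughout, I use the fact from \cite[Theorem~4.2]{CG22}: $M_\alpha$ is antimatter if and only if $1$ is not an atom, that is, if and only if $\alpha$ is a root of some polynomial in $x\nn_0[x]-1$.

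\textbf{(a) $\Rightarrow$ (b).} Assume $M_\alpha$ is simple and antimatter.
\begin{enumerate}
\item By Proposition~\ref{prop:antimatter-algebraic-integer}, $\alpha^{-1}$ is an algebraic integer. It is real and exceeds $1$ because $\alpha\in(0,1)$.
\item By Proposition~\ref{prop:antimatter-Perron-number}(1), $\alpha$ is the only positive root of $m_\alpha(x)$. Hence $\alpha^{-1}$ is the only positive root of $r_\alpha(x)$, so $\alpha^{-1}$ has no positive conjugate other than itself.
\item By Proposition~\ref{prop:antimatter-Perron-number}(2), every conjugate of $\alpha^{-1}$ has modulus at most $\alpha^{-1}$.
\end{enumerate}
It remains to make the inequality in step 3 strict, and this is the main obstacle. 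Suppose some conjugate $\gamma\neq\alpha^{-1}$ has $|\gamma|=\alpha^{-1}$. Take $g(x)=x^d-\sum c_ix^i$ with $c_i \in \nn_0$, the negative reciprocal of an antimatter polynomial $f$. Equality in the chain $|\gamma|^d=\sum c_i|\gamma|^i\ge|\sum c_i\gamma^i|=|\gamma|^d$ forces $\gamma^{i-d}$ to be a positive real for every $i\in\supp$ of $\sum c_i x^i$. Writing $\gamma=\alpha^{-1}\zeta$, this makes $\zeta^{d-i}=1$ for all such $i$. Two routes are available from here. One is to invoke \cite[Theorem]{dB94}, as the paper suggests, to conclude that $m_\alpha(x)$ is then not simple, contradicting (a). The other is to argue directly: since $\zeta\neq1$, the roots of $m_\alpha$ are stable under multiplication by a nontrivial root of unity, forcing $m_\alpha(x)=m(x^k)$ with $k>1$. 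I expect the careful version of the direct route to need the Galois-orbit argument, namely that multiplication by $\zeta$ permutes the conjugates. Either way, $\alpha^{-1}$ is Perron.

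\textbf{(b) $\Rightarrow$ (c).} Proposition~\ref{prop:lonely-Perroney-simple-antimatter} gives $h(x) \in \zz[x]$ with $h(x)w_\alpha(x)$ a simple polynomial in $x\nn_0[x]-1$. This polynomial is a multiple of $m_\alpha(x)$, which is exactly (c).

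\textbf{(c) $\Rightarrow$ (a).} Let $p(x)\in x\nn_0[x]-1$ be the simple multiple of $m_\alpha(x)$ given by (c).
\begin{enumerate}
\item Since $p(\alpha)=0$, \cite[Theorem~4.2]{CG22} shows $M_\alpha$ is antimatter.
\item For simplicity, suppose $m_\alpha(x)=m(x^k)$ with $k\ge2$. Then for each root $\beta$ of $m_\alpha$, the number $\beta e^{2\pi i/k}$ is also a root, of the same modulus as $\beta$. Applying this to $\beta=\alpha^{-1}$ on the reciprocal side, $\alpha^{-1}e^{2\pi i/k}$ is a conjugate of $\alpha^{-1}$ with equal modulus. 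This contradicts $\alpha^{-1}$ being Perron, so $m_\alpha(x)$ is simple.
\end{enumerate}
This direction in fact only uses that some multiple lies in $x\nn_0[x]-1$; the simplicity of $p$ is not needed.
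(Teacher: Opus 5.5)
Your proof is correct and follows the paper's argument: (a)$\Rightarrow$(b) combines Propositions~\ref{prop:antimatter-Perron-number} and~\ref{prop:antimatter-algebraic-integer} with \cite{dB94} for the strict inequality, (b)$\Rightarrow$(c) is Proposition~\ref{prop:lonely-Perroney-simple-antimatter}, and (c)$\Rightarrow$(a) uses \cite[Theorem~4.2]{CG22} together with the fact that $\alpha^{-1}$ being Perron forces $m_\alpha(x)$ to be simple, which you usefully make explicit via the rotated conjugate $\alpha^{-1}e^{2\pi i/k}$. In (a)$\Rightarrow$(b), rely only on the \cite{dB94} route: your ``direct'' alternative is unproved as written, because the equality-case computation only shows that the antimatter polynomial $f$ is a polynomial in $x^k$, not that multiplication by $\zeta$ permutes the conjugates of $\alpha^{-1}$.
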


\begin{proof}
    (a) $\Rightarrow$ (b): This implication follows as a result of combining Propositions~\ref{prop:antimatter-Perron-number} and~\ref{prop:antimatter-algebraic-integer}. Specifically, Proposition~\ref{prop:antimatter-Perron-number}(2) provides that $\alpha^{-1}$ is not exceeded by any of its conjugates, while strict inequality follows from~\cite[Theorem]{dB94} because $m_\alpha(x)$ is simple.
    \smallskip
    
    (b) $\Rightarrow$ (c): This one follows from Proposition~\ref{prop:lonely-Perroney-simple-antimatter}.
    \smallskip

    (c) $\Rightarrow$ (a): Only simple polynomials can have Perron numbers as roots, meaning $r_\alpha(x)$ and thus $m_\alpha(x)$ must be simple. Further, $p(x)$ acts as an antimatter decomposition of~$1$, so it cannot be an atom. This suffices to show that $M_\alpha$ has no atoms by~\cite[Theorem~4.2]{CG22}.
\end{proof}

This easily extends to the case where $M_\alpha$ is not simple.

\begin{corollary}\label{cor:antimatter-exact}
    For any $\alpha \in \aaa_{>0}$, the monoid $M_\alpha$ is antimatter if and only if the simplified polynomial of $r_\alpha(x)$ has a Perron number as a root and $\alpha$ has no positive conjugate aside from itself.
\end{corollary}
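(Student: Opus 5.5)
The plan is to reduce to the simple case via Proposition~\ref{prop:simple-product} and then apply Theorem~\ref{thm:antimatter-exact} to $\rho := \alpha^k$.

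First, write $m_\alpha(x) = m(x^k)$ with $m(x)$ simple. As in the proof of Proposition~\ref{prop:simple-product}, $m(x)$ is the minimal polynomial of $\rho$. The reciprocal polynomial also satisfies $r_\alpha(x) = r_\rho(x^k)$, and reversal preserves supports up to reflection, so $r_\rho(x)$ is the simplified polynomial of $r_\alpha(x)$. By Proposition~\ref{prop:simple-product}, $M_\alpha \cong M_\rho^k$. A finite direct product of monoids is antimatter if and only if each factor is antimatter: an atom $a$ of $M$ yields the atom $(a,0,\dots,0)$ of $M^k$, since $M_\rho$ is reduced. Conversely, if $M_\rho$ is antimatter, any non-unit tuple has a non-unit coordinate, and splitting that coordinate nontrivially splits the tuple. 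Hence $M_\alpha$ is antimatter if and only if $M_\rho$ is antimatter, and $M_\rho$ is simple.

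Next, handle $\rho \ge 1$ separately, since Theorem~\ref{thm:antimatter-exact} needs $\rho \in (0,1)$. If $M_\rho$ is antimatter, Proposition~\ref{prop:antimatter-algebraic-integer} gives a polynomial in $x\nn_0[x]-1$ vanishing at $\rho$. This forces $\rho < 1$, because at $\rho \ge 1$ a nonzero $x\nn_0[x]$ part evaluates to at least $1$, with equality only for the polynomial $x - 1$, and $\rho = 1$ gives $M_1 = \nn_0$, which is not antimatter. Conversely, if the simplified polynomial $r_\rho$ has a Perron root, I must check that this root is $\rho^{-1}$. The positivity hypotheses say $\alpha$ has no other positive conjugate. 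Then $\rho$ has no other positive conjugate either, since a positive conjugate $\beta \ne \rho$ of $\rho$ would give a positive $k$-th root that is a conjugate of $\alpha$ different from $\alpha$. So the Perron root, being positive and real, must be $\rho^{-1}$. In particular $\rho^{-1} > 1$, so $\rho \in (0,1)$.

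Finally, apply Theorem~\ref{thm:antimatter-exact}(a)$\Leftrightarrow$(b) to $\rho$. For the forward direction, $M_\rho$ antimatter gives that $\rho^{-1}$ is Perron with no other positive conjugate. Proposition~\ref{prop:antimatter-Perron-number}(1) applied to $\alpha$ directly gives that $\alpha$ has no positive conjugate besides itself. For the reverse direction, the previous paragraph shows that $\rho^{-1}$ is Perron with no other positive conjugate, so $M_\rho$, and hence $M_\alpha$, is antimatter.

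The main obstacle is the bookkeeping between conjugates of $\alpha$ and of $\rho = \alpha^k$, in particular transferring ``no other positive conjugate''. Conjugates of $\rho$ are exactly the $k$-th powers of conjugates of $\alpha$. A positive conjugate $\beta$ of $\rho$ has the positive real $k$-th root $\beta^{1/k}$. This is a root of $m(x^k) = m_\alpha(x)$, hence a positive conjugate of $\alpha$, and it differs from $\alpha$ when $\beta \ne \rho$. I would write this transfer out carefully.
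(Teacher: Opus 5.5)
Your proposal is correct and follows the paper's route. The paper's proof is only the remark that the simple case extends: pass to $\rho=\alpha^k$ via Proposition~\ref{prop:simple-product} and apply Theorem~\ref{thm:antimatter-exact}, and you supply what it leaves implicit (antimatterness of finite products, $\rho<1$, and identifying the Perron root as $\rho^{-1}$ by transferring ``no other positive conjugate'' from $\alpha$ to $\rho$). There are two harmless slips: the polynomial in $x\nn_0[x]-1$ vanishing at $\rho$ comes from $1$ not being an atom (as in the proof of Proposition~\ref{prop:antimatter-algebraic-integer}) rather than from that proposition's statement, and at $\rho=1$ every $x^j-1$ vanishes, not only $x-1$, which does not matter since you treat $\rho=1$ separately.
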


\bigskip
\section{The Valuation Property}
\label{sec:valuation}

The primary purpose of this section is to study which monoids $M_\alpha$ are valuation monoids. As every valuation monoid is either antimatter or has its reduced monoid isomorphic to $\mathbb{N}_0$, it suffices to restrict our attention to the monoids $M_\alpha$ that are antimatter. We begin by demonstrating a short example of an irrational monoid without the valuation property, as the Puiseux (submonoids of $\mathbb{Q}_{\geq0}$) valuation monoids have been characterized~\cite[Proposition 3.1]{GGT21}.

The proof of Proposition~\ref{prop:lonely-Perroney-simple-antimatter}, the explanation of the following example, and our later proof of Theorem~\ref{thm:simple-antimatter-is-valuation} that establishes an exact characterization of the valuation monoids all rely on homogeneous linear recurrence relations. However, the proof we present below is especially interesting as the recurrence used is a generalized Fibonacci sequence. Further, it involves Pisot numbers, which are a subclass of Perron numbers. Although the below example follows directly from Theorem~\ref{thm:simple-antimatter-is-valuation}, its relation to Fibonacci numbers makes it interesting and worth exploring.

\begin{proposition} \label{prop:irrational-valuation-example}
    For any $d \in \nn$, there exists $\alpha \in \aaa$ such that the monoid $M_\alpha$ is an antimatter valuation monoid of rank~$d$ that is not a group.
\end{proposition}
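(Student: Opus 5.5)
The plan is to take an explicit $d$-bonacci-type parameter. Let $\alpha \in (0,1)$ be the unique positive root of
\[
	f_d(x) := x^d + x^{d-1} + \cdots + x - 1 \in x\nn_0[x] - 1 .
\]
Then $\alpha^{-1}$ is the dominant root of the reciprocal polynomial $g_d(x) = x^d - x^{d-1} - \cdots - x - 1$, which is the characteristic polynomial of the generalized Fibonacci recurrence. It is classical that $g_d(x)$ is irreducible over $\qq$ and that its root $\alpha^{-1}$ is a Pisot number: all other conjugates lie strictly inside the unit disk. Hence $m_\alpha(x) = f_d(x)$ has degree $d$. The identity $f_d(\alpha)=0$ is an antimatter decomposition $1 = \alpha + \cdots + \alpha^d$, so $M_\alpha$ is antimatter by \cite[Theorem~4.2]{CG22}; alternatively, Theorem~\ref{thm:antimatter-exact} applies since a Pisot number is Perron and $\alpha^{-1}$ has no other positive conjugate by Descartes' rule. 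Since $M_\alpha \subseteq \rr_{\ge 0}$ contains $1$, it is not a group.

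The rank comes next. The difference group of $M_\alpha$ is $\zz[\alpha, \alpha^{-1}]$. Because $\alpha^{-1} = 1 + \alpha + \cdots + \alpha^{d-1}$, this group equals $\zz[\alpha] = \bigoplus_{i=0}^{d-1} \zz\alpha^i$, which is free of rank $d$ as $\deg m_\alpha = d$. For the valuation property, I will prove the stronger statement
\[
	M_\alpha = \zz[\alpha] \cap \rr_{\ge 0}.
\]
Given this, for $a,b \in M_\alpha$ one of $b-a$ and $a-b$ is a nonnegative element of $\zz[\alpha]$, hence lies in $M_\alpha$, so $a \mid_{M_\alpha} b$ or $b \mid_{M_\alpha} a$.

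To prove the equality, fix $z \in \zz[\alpha]$ with $z > 0$ and write $z = \sum_{i=0}^{d-1} c_i \alpha^i$ with $c_i \in \zz$. The rewriting rule $\alpha^n = \alpha^{n+1} + \cdots + \alpha^{n+d}$ shows that for every $N$ there are unique integers $t_0(N), \dots, t_{d-1}(N)$ with
\[
	z = \sum_{j=0}^{d-1} t_j(N)\, \alpha^{N+j},
\]
since $\{\alpha^{N+j}\}_{j<d}$ is again a $\zz$-basis of $\zz[\alpha]$. It then suffices to show that $t_j(N) \ge 0$ for all $j$ once $N$ is large. As a function of $N$, each $t_j(N)$ satisfies the linear recurrence with characteristic polynomial $g_d$: passing from $N$ to $N+1$ multiplies the coordinate vector by the companion-type integer matrix attached to the rule. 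So Theorem~\ref{thm:LHRR solutions} gives
\[
	t_j(N) = \beta_j\, \alpha^{-N} + \sum_{k \ge 2} \beta_{j,k}\, \rho_k^N,
\]
where $\rho_2, \dots, \rho_d$ are the other roots of $g_d$ and satisfy $|\rho_k| < 1$.

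The main obstacle is showing that each dominant coefficient $\beta_j$ is strictly positive. I plan to identify it directly. Multiplying the displayed representation of $z$ by $\alpha^{-N}$ and letting $N \to \infty$ shows that $\beta_j = \lambda_j z$, where $(\lambda_0, \dots, \lambda_{d-1})$ is the coordinate functional attached to the Perron eigenvector of the companion matrix. Equivalently, $\beta_j$ is obtained by applying the real embedding $\alpha \mapsto \alpha$ and discarding the contributions of the nonreal (or negative) embeddings, which are exactly the decaying terms. Concretely, I would compute $\lambda_j = \alpha^{-j}(1 - \alpha^{j+1} \cdots)$-type expressions from the tail sums $\alpha^j + \alpha^{j+1} + \cdots$ of the relation $1 = \sum_{i=1}^d \alpha^i$, and check that they are positive because every coefficient of $f_d(x)+1$ is nonnegative. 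Since $z>0$, this gives $\beta_j > 0$. The remaining terms tend to $0$ because the $\rho_k$ lie inside the unit disk, so $t_j(N) \to +\infty$, and in particular $t_j(N) \ge 0$ for large $N$. Therefore $z \in M_\alpha$, which completes the proof.
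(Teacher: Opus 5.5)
Your proposal is correct, and it reaches the valuation property by a different route from the paper. The setup is shared: the same $d$-bonacci parameter, the same cited irreducibility and Pisot facts, and the same sliding windows $\alpha^N,\dots,\alpha^{N+d-1}$.

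The paper argues by contradiction. It assumes the window coordinates of $w-w'$ never share a sign and shows that the first and last coordinates then have opposite signs infinitely often. It writes these coordinates through $d$-bonacci numbers and uses the Binet formula to trap the integer $G_\ell$ between bounds $L_s<G_\ell<R_s$ with $R_s-L_s\to 0$. This forces $\sum_j G_j\alpha^j=0$, contradicting $\deg m_\alpha=d$.

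You instead identify the dominant coefficient $\beta_j$ of each coordinate sequence and show directly that it is a positive multiple of $z$. This buys a cleaner and more informative proof:
\begin{itemize}
\item there is no case split, no sign-alternation claim, and no error bookkeeping;
\item you get the explicit description $M_\alpha=\zz[\alpha]\cap\rr_{\ge 0}$;
\item positivity visibly comes from the nonnegative coefficients of the relation $1=\sum_{i=1}^d\alpha^i$;
\item only $|\rho_k|<\alpha^{-1}$ is needed, not the full Pisot property.
\end{itemize}
The paper's squeeze, in exchange, needs nothing beyond the Binet formula and never has to identify eigenvectors.

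One step should be tightened. Letting $N\to\infty$ in $z=\sum_j t_j(N)\alpha^{N+j}$ yields only the single normalization $\sum_j\beta_j\alpha^j=z$, not the individual $\beta_j$. What pins those down is the transition rule $t_j(N+1)=t_0(N)+t_{j+1}(N)$, with $t_d:=0$. Because the dominant eigenvalue $\alpha^{-1}$ of this matrix is simple, $(\beta_j)_j$ is a multiple of its Perron eigenvector $v_j=\alpha+\cdots+\alpha^{d-j}=\alpha^{-j}(\alpha^{j+1}+\cdots+\alpha^d)>0$. Since $\sum_j v_j\alpha^j=\sum_{i=1}^d i\alpha^i$, the normalization gives $\beta_j=z\,v_j/\sum_{i=1}^d i\alpha^i$. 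This is exactly the tail-sum expression you anticipated, and it is positive when $z>0$.

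A minor point: the group generated by $M_\alpha$ is $\zz[\alpha]$ in general. It coincides with $\zz[\alpha,\alpha^{-1}]$ here only because $\alpha^{-1}\in\zz[\alpha]$.
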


\begin{proof}
    When $d=1$, then setting $\alpha = \frac{1}{n}$ for any $n\in\nn$ yields a valuation monoid; moreover, when $n\geq2$, the corresponding monoid is antimatter. Hence, we focus on the case where $d \ge 2$. Consider the polynomial $f(x) := -1 + \sum_{i=1}^{d} x^i \in \zz[x]$, which has precisely one sign change. Let $\alpha$ be the unique positive root guaranteed by Descartes' Rule of Signs. It follows from~\cite[Theorem~2]{aB50} that $x^d - \sum_{i=0}^{d-1}x^i \in \qq[x]$ is irreducible, meaning $f(x)$ is the minimal polynomial of $\alpha$. Further, the paper provides that its root $\alpha^{-1}>1$ is a Pisot-Vijayaraghavan number (which we refer to as a Pisot number), i.e., $\alpha^{-1}$ is a Perron number with the further restriction that all of its conjugates are less than $1$ in norm. For each $n \in \nn_0$, after multiplying the equality $1 = \sum_{i=1}^d \alpha^i$ by $\alpha^n$, we obtain that
    \begin{equation} \label{eq:alpha^n}
            \alpha^n = \sum_{i=1}^d \alpha^{n+i}.
    \end{equation}
    Consider the positive monoid $M_\alpha$, which has rank~$d$. The element $1$ is not an atom of $M_\alpha$ because $1 = \sum_{i=1}^d \alpha^i$, whence the monoid $M_\alpha$ is antimatter by virtue of \cite[Theorem~4.2]{CG22}.
    \smallskip

    To argue that $M_\alpha$ is a valuation monoid, fix $w,w' \in M_\alpha$, and let us prove that the principal ideals $w + M_\alpha$ and $w' + M_\alpha$ are comparable under set inclusion. First, notice that for any $\sum_{i=0}^k a_i \alpha^i \in M_\alpha$ with coefficients $a_0, \dots, a_k \in \nn_0$ and a given $\ell \in \nn$ with $\ell \ge k$, repeated applications of~\eqref{eq:alpha^n} allow us to write $\sum_{i=0}^k a_i \alpha^i = \sum_{i=\ell}^{\ell+d-1} b_i \alpha^i$ for some $b_\ell, b_{\ell + 1}, \dots, b_{\ell+d-1}$. Therefore, for each sufficiently large $r \in \nn_0$, we can take coefficients $c_0, \dots, c_{d-1}$ and $c'_0, \dots, c'_{d-1}\in\mathbb{N}_0$ so that
    \begin{equation}
            w = \sum_{i=0}^{d-1} c_i \alpha^{r+i} \quad \text{ and } \quad w' = \sum_{i=0}^{d-1} c'_i \alpha^{r+i}.
    \end{equation}
    Fix such a sufficiently large $r$ and set $G_i := c_i - c'_i$ for every index $i \in \ldb 0, d-1 \rdb$. We split the rest of the proof into the following cases, taking into account the convention that $0$ has the same sign as both positive and negative numbers.
    \smallskip

    \textsc{Case 1:} the nonzero elements of $\{G_0, \dots, G_{d-1}\}$ are all of the same sign. If $\min \{G_0, \dots, G_{d-1} \} \ge 0$, then $w - w' \in M_\alpha$. We can similarly deduce that $w' - w \in M_\alpha$ when $\max \{G_0, \dots, G_{d-1} \} \le 0$.
    \smallskip

    \textsc{Case 2:} not all $G_0, \dots, G_{d-1} \in \zz$ have the same sign, where again we exclude $0$ from consideration. Suppose for the sake of contradiction that for each $s \in \nn_0$, after writing $w-w'$ entirely in terms of the powers of $\alpha$ from $\alpha^{s+r}$ to $\alpha^{s+r+d-1}$, not all of the $d$ coefficients are of the same sign. Let us argue the following claim.
    \smallskip

    \noindent\textsc{Claim.} For infinitely many $s$, the coefficient of the $\alpha^{s+r}$ term and the coefficient of the $\alpha^{s+r+d-1}$ term have opposite signs.
    \smallskip

    \noindent\textsc{Proof of Claim.} Suppose, towards a contradiction, that there exists some $a \in \nn_0$ for which every index $s \in \ldb a, a+d-1 \rdb$ satisfies the condition that the coefficient of the first term $\alpha^{s+r}$ and the coefficient of the last term $\alpha^{s+r+d-1}$ have the same sign. Since the coefficient of $\alpha^{s+r}$ equals the coefficient of $\alpha^{s+r+d}$ from one value of $s$ to the next after applications of~\eqref{eq:alpha^n}, and adding two numbers of the same sign preserves the sign, we know that for each $s \in \ldb a, a+d-1 \rdb$, the coefficients of the terms $\alpha^{a+r+d-1}, \dots, \alpha^{s+r+d-1}$ will have the same sign. Thus, when $s = a+d-1$, all $d$ coefficients will have the same sign, which is a contradiction. \hfill $\square$
    \smallskip

    Let $(F_n)_{n \ge -d}$ denote the Fibonacci sequence of order~$d$, which is defined as follows: $F_n := 0$ for every $n \in \ldb -d,-3 \rdb$, $F_{-2} := -1$, $F_{-1}:=1$, and
    \[
        F_n := \sum_{k=n-d}^{n-1} F_k
    \]
    for every $n \in \nn_0$. Thus, $F_n = 0$ for every $n \in \ldb 0,d-2 \rdb$ while $F_{d-1} = 1$. We can rewrite $w-w'$ using the terms of the sequence $(F_n)_{n \ge -d}$ as follows:
    \begin{equation} \label{Fibonacci's aux}
            w-w' = \sum_{i=0}^{d-1} G_i \alpha^{r+i} = \sum_{i=0}^{d-1} \ \sum_{j=0}^{d-1} \delta_{i,j} G_j \alpha^{r+i} = \sum_{i=0}^{d-1} \ \sum_{j=0}^{d-1} \sum_{k=i-j-1}^{d-j-2} F_k G_j \alpha^{r+i},
    \end{equation}
    with the last equality due to $\llbracket i-j-1,d-j-2\rrbracket$ only containing $-2$ when $i-j \leq -1$, and containing $-1$ when $i-j \leq 0$, the only two possible $k$ in that interval for which $F_k$ is nonzero.

    Note that
    \begin{align*}
        \sum_{i=0}^{d-1} \ \sum_{j=0}^{d-1} \sum_{k=i-j-1}^{d-j-2} F_{s+k} G_j \alpha^{s+r+i}
        &= \sum_{j=0}^{d-1} \sum_{k=i-j-1}^{d-j-2} F_{s+k} G_j \alpha^{s+r} \\
        &\quad + \sum_{i=1}^{d-1} \ \sum_{j=0}^{d-1} \sum_{k=i-j-1}^{d-j-2} F_{s+k} G_j \alpha^{s+r+i} \\
        &= \sum_{i=1}^d \ \sum_{j=0}^{d-1} F_{s+d-j-1} G_j \alpha^{s+r+i} \\
        &\quad + \sum_{i=1}^{d-1} \ \sum_{j=0}^{d-1} \sum_{k=i-j-1}^{d-j-2} F_{s+k} G_j \alpha^{s+r+i} \\
        &= \sum_{i=1}^{d-1} \sum_{j=0}^{d-1} \bigg(F_{s+d-j-1} + \sum_{k=i-j-1}^{d-j-2} F_{s+k} \bigg) G_j \alpha^{s+r+i} \\
        &\quad + \sum_{j=0}^{d-1} F_{s+d-j-1} G_j \alpha^{s+r+d} \\
        &= \sum_{i=1}^{d-1} \ \sum_{j=0}^{d-1} \sum_{k=i-j-1}^{d-j-1} F_{s+k} G_j \alpha^{s+r+i} \\
        &\quad + \sum_{j=0}^{d-1} \sum_{k=d-j-1}^{d-j-1} F_{s+k} G_j \alpha^{s+r+d} \\
        &= \sum_{i=1}^d \ \sum_{j=0}^{d-1} \sum_{k=i-j-1}^{d-j-1} F_{s+k} G_j \alpha^{s+r+i} \\
        &= \sum_{i=0}^{d-1} \ \sum_{j=0}^{d-1} \sum_{k=i-j-1}^{d-j-2} F_{s+k+1} G_j \alpha^{s+r+i+1}.
    \end{align*}
    As a consequence, for every $s \ge 0$, we conclude that
    \[
        w - w' = \sum_{i=0}^{d-1} \sum_{j=0}^{d-1} \sum_{k=i-j-1}^{d-j-2} F_{s+k} G_j \alpha^{s+r+i}.
    \]
    By the claim, the coefficients of the $\alpha^{s+r}$ and $\alpha^{s+r+d-1}$ terms have opposite signs for infinitely many indices $s \in \nn_0$. Suppose without loss of generality that the coefficient of $\alpha^{s+r}$ is positive and the coefficient of $\alpha^{s+r+d-1}$ is negative for infinitely many $s \ge 0$. Thus,
    \[
        \sum_{j=0}^{d-1} \sum_{k=-j-1}^{d-j-2} F_{s+k} G_j = \sum_{j=0}^{d-1} F_{s+d-j-1} G_j > 0,
    \]
    and
    \[
        \sum_{j=0}^{d-1} \sum_{k=d-j-2}^{d-j-2} F_{s+k} G_j = \sum_{j=0}^{d-1} F_{s+d-j-2} G_j < 0.
    \]
    Because not all $G_i$ for $i\in\ldb 0,d-1 \rdb$ have the same sign, suppose $G_\ell$ is positive for some $\ell \in \ldb 0, d-1 \rdb$. The above two inequalities can be rearranged as follows:
    \[
        -\sum_{\substack{0\leq j\leq d-1\\j\neq \ell}} \frac{F_{s+d-j-1}}{F_{s+d-\ell-1}} G_j < G_\ell < -\sum_{\substack{0\leq j\leq d-1\\j\neq \ell}} \frac{F_{s+d-j-2}}{F_{s+d-\ell-2}}G_j,
    \]
    and we set $L_s$ and $R_s$ to be the left and right bounds, respectively. By \cite[Equation~2]{PW23}, the formula for each value in the Fibonacci sequence of order $d$ is
    \[
        F_n = \sum_{i=1}^d \frac1{\displaystyle\prod_{j \ne i} (\phi_i-\phi_j)} \phi_i^n,
    \]
    where $\phi_i$ for $i \in \ldb 1,d\rdb$ are the roots of $x^d - \sum_{i=0}^{d-1} x^i$. Without loss of generality, take $\phi_1 := \alpha^{-1}$, one of the roots of this polynomial. Then, for each $n \geq 0$, the equality $F_n = C \phi_1^n + E_n$ holds,
    where
    \[
        C := \frac1{\displaystyle\prod_{j\neq1} (\phi_1-\phi_j)} \quad \text{ and } \quad E_n := \sum_{i=2}^d \frac{1}{\displaystyle\prod_{j\neq i} (\phi_i-\phi_j)}\phi_i^n.
    \]
    For $i \in \ldb 2,d \rdb$, we know that $\lvert\phi_i\rvert < 1$ since $\phi_1$ is a Pisot number, so the sequence $(E_n)_{n \ge 0}$ tends to $0$ in magnitude. Further, $\lvert E_n\rvert \leq D\rho^n$ for some fixed $D>0$ and $\rho := \max\{ \lvert\phi_i\rvert : i \in \ldb 2,d \rdb \} \in (0,1)$. After setting $\mu := \max(\ldb 0,d-1 \rdb \setminus \{\ell\})$, we see that
    \begin{align*}
        R_s &= -\sum_{\substack{ 0\leq j \leq d-1 \\ j\neq \ell }} \frac{\left(\tfrac{C}{\alpha^{s+d-j-2}}\right) + E_{s+d-j-2}}{\left(\tfrac{C}{\alpha^{s+d-\ell-2}}\right) + E_{s+d-\ell-2}} G_j
        = -\;\sum_{\mathclap{\substack{ 0\leq j\leq d-1 \\ j\neq \ell}}}\; \alpha^{j-\ell} \frac{C + E_{s+d-j-2}\alpha^{s+d-j-2}} {C + E_{s+d-\ell-2}\alpha^{s+d-\ell-2}} G_j \\
        &\le -\;\sum_{\mathclap{\substack{0\leq j\leq d-1 \\ j\neq \ell}}}\; \alpha^{j-\ell} \frac{C + E_{s+d-\mu-2}\alpha^{s+d-\mu-2}} {C + E_{s+d-\ell-2}\alpha^{s+d-\ell-2}} G_j
        = -\;\sum_{\mathclap{\substack{0\leq j\leq d-1 \\ j\neq \ell}}}\; \alpha^{j-\ell}\frac{1 + \gamma_{s+d-\mu-2} }{1 + \gamma_{s+d-\ell-2} } G_j,
    \end{align*}
    where $\gamma_n := E_n\alpha^n/C$. We can bound this value as follows:
    \[
        \lvert\gamma_n\rvert = \frac{|E_{n}|\,\alpha^n}{|C|} \leq \frac{D}{|C|}(\rho\alpha)^n = D'\sigma^n,
    \]
    where $D' := D/\lvert C\rvert > 0$ is a constant and $\sigma := \rho\alpha$. Note that $\sigma \in (0,1)$ when $\alpha < 1$. Thus, there exists $b \in \nn$ such that $\lvert\gamma_n\rvert < \frac{1}{2}$ for all $n \ge b$. For these values of $n$,
    \[
        \frac{1}{1 + \gamma_{n}} =  \sum_{i=0}^\infty (-\gamma_{n})^i = 1 + \theta_n,
    \]
    where $\theta_n := \sum_{i=1}^\infty (-\gamma_{n})^i$, and this is bounded as
    \[
        \lvert\theta_n\rvert = \left|\sum_{i=1}^\infty (-\gamma_{n})^i\right| \leq \lvert\gamma_n\rvert\sum_{i=0}^\infty |\gamma_{n}|^i = \lvert\gamma_n\rvert\frac{1}{1-|\gamma_n|} \leq 2|\gamma_n|.
    \]
    Therefore, for each $s \in \nn_0$ with $s \ge b+1$, we obtain that $s+d-\ell-2 \geq s-1 \geq b$, whence
    \begin{align*}
        R_s &\leq -\;\sum_{\mathclap{\substack{0 \leq j\leq d-1 \\ j \neq \ell}}}\; \alpha^{j-\ell}\frac{1 + \gamma_{s+d-\mu-2} }{1 + \gamma_{s+d-\ell-2} } G_j \\
        &= - \;\sum_{\mathclap{\substack{0\leq j\leq d-1 \\ j\neq \ell}}}\; \alpha^{j-\ell} G_j (1+\gamma_{s+d-\mu-2})(1+\theta_{s+d-\ell-2}) \\
        &= \delta_s - \;\sum_{\mathclap{\substack{0 \le j \le d-1 \\ j\neq \ell}}}\; \alpha^{j-\ell} G_j,
    \end{align*}
    where $\delta_s := -A(\gamma_{s+d-\mu-2} + \theta_{s+d-\ell-2}(1 + \gamma_{s+d-\mu-2}))$. Now set $\displaystyle A := \;\sum_{\mathclap{\substack{0\leq j\leq d-1 \\ j\neq \ell}}}\; \alpha^{j-\ell}G_j$ and observe that
    \begin{align*}
        |\delta_s| &= |A||\gamma_{s+d-\mu-2} + \theta_{s+d-\ell-2}(1 + \gamma_{s+d-\mu-2})| \\
        &\leq |A|(|\gamma_{s+d-\mu-2}| + |\theta_{s+d-\ell-2}|(1 + |\gamma_{s+d-\mu-2}|)) \\
        &\leq |A|(|\gamma_{s+d-\mu-2}| + 2|\gamma_{s+d-\ell-2}|(1 + |\gamma_{s+d-\mu-2}|)) \\
        &\leq |A|(D'\sigma^{s+d-\mu-2} + 2D'\sigma^{s+d-\ell-2}(1 + |\gamma_{s+d-\mu-2}|)) \\
        &= |A|\sigma^{s}(D'\sigma^{d-\mu-2} + 2D'\sigma^{d-\ell-2}(1 + |\gamma_{s+d-\mu-2}|)) \\
        &\leq |A|\sigma^{s}(D'\sigma^{d-\mu-2} + 2D'\sigma^{d-\ell-2}(1 + |\gamma_{d-\mu-2}|)),
    \end{align*}
    with the last line due to the sequence $(\gamma_n)_{n \ge 0}$ being strictly decreasing. As a consequence, after setting $P := |A|(D'\sigma^{d-\mu-2} + 2D'\sigma^{d-\ell-2}(1 + |\gamma_{d-\mu-2}|)) > 0$, the inequality $|\delta_s| \le P\sigma^s$ holds for all sufficiently large $s \in \nn_0$. Similarly, we can set $Q := \lvert A\rvert(D'\sigma^{d-\mu-1} + 2D'\sigma^{d-\ell-1}(1 + |\gamma_{d-\mu-1}|)) > 0$ and argue that
    \[
        L_s = \varepsilon -\;\sum_{\mathclap{\substack{0\leq j\leq d-1 \\ j\neq \ell}}}\; \alpha^{j-\ell} G_j
    \]
    for some $\lvert\varepsilon_s\rvert \leq Q \sigma^s$ that holds for all sufficiently large $s \in \nn_0$. Since $L_s < G_\ell < R_s$ for infinitely many $s$ and $\lvert R_s-L_s\rvert = \lvert \delta_s - \varepsilon_s\rvert \leq (P+Q)\sigma^s < 1$ for all sufficiently large $s \in \nn_0$ and approaches $0$, and given that $G_\ell$ is an integer,
    \[
        G_\ell = -\;\sum_{\mathclap{\substack{0\leq j\leq d-1\\j\neq \ell}}}\; \alpha^{j-\ell} G_j.
    \]
    Therefore, $\sum_{j=0}^{d-1} G_j \alpha^j=0$, so $\alpha$ is the root of a polynomial in $\zz[x]$ with degree $d-1$. However, this contradicts that the minimal polynomial of $\alpha$ has degree $d$, from which we deduce that either $w - w' \in M_\alpha$ or $w'-w  \in M_\alpha$. Hence we conclude that $M_\alpha$ is a valuation monoid of rank~$d$.
\end{proof}

For instance, the case of $d=2$ yields the reciprocal of the golden ratio.

\medskip
\subsection{A Sufficient Condition}

In a similar vein to the above proposition, we employ our understanding of the antimatter condition on $M_\alpha$ to show some simple conditions for $M_\alpha$ to be a valuation monoid. We begin with the following lemma.

\begin{lemma} \label{lem:rational homogeneous linear recurrences}
    Let $(a_j)_{j \ge 1}$ be a sequence of rationals that satisfies a homogeneous linear recurrence governed by the characteristic polynomial $p(x) \in \mathbb{Q}[x]$. If $\gamma$ is a root of $p(x)$, then for any degree $d \in \nn_0$, the coefficients of the $x^d$ terms attached to conjugates of $\gamma$ in the closed form of $a_j$ are themselves conjugates.
\end{lemma}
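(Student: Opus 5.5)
We interpret the statement as follows. Write the closed form from Theorem~\ref{thm:LHRR solutions} as
\[
a_j=\sum_{i=1}^r P_i(j)\rho_i^j, \qquad P_i(x)\in K[x],
\]
where $K$ is the splitting field of $p(x)$ over $\qq$. The claim is that if $\rho_i$ and $\rho_{i'}$ are conjugate roots, then for every $d$ the coefficients $[x^d]P_i(x)$ and $[x^d]P_{i'}(x)$ are conjugate. More precisely, any $\sigma\in\mathrm{Gal}(K/\qq)$ sending $\rho_i$ to $\rho_{i'}$ sends $[x^d]P_i(x)$ to $[x^d]P_{i'}(x)$.

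The plan is to apply Galois automorphisms to the closed form and invoke uniqueness of the representation in the basis $\mathcal{B}_{p(x)}$.

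First, conjugate roots of $p(x)$ have the same multiplicity. Indeed, $p(x)$ factors over $\qq$ into powers of distinct irreducibles, and each irreducible is separable in characteristic zero. So for $\sigma\in\mathrm{Gal}(K/\qq)$, the map $\rho_i\mapsto\sigma(\rho_i)$ permutes the roots while preserving the exponents $e_i$.

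Next, fix $\sigma$ and apply it termwise to the closed form. Since $a_j\in\qq$ and $j$ is an integer, we get
\[
a_j=\sigma(a_j)=\sum_{i=1}^r (\sigma P_i)(j)\,\sigma(\rho_i)^j,
\]
where $\sigma P_i$ means applying $\sigma$ to the coefficients of $P_i$. Note that $\deg \sigma P_i=\deg P_i<e_i$. Reindex by the permutation $\tau$ defined by $\sigma(\rho_i)=\rho_{\tau(i)}$. This gives a second representation
\[
a_j=\sum_{i'} (\sigma P_{\tau^{-1}(i')})(j)\,\rho_{i'}^j
\]
in the same form, and the degree constraints hold because $e_{\tau(i)}=e_i$.

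By Theorem~\ref{thm:LHRR solutions}, the sequences $(n^k\rho_i^n)$ form a basis of the solution space over $K$, so the representation is unique. Hence $P_{\tau(i)}=\sigma P_i$ coefficientwise, which means $[x^d]P_{\tau(i)}=\sigma([x^d]P_i)$ for every $d$. Finally, for any two conjugates $\rho_i,\rho_{i'}$ there exists some $\sigma\in\mathrm{Gal}(K/\qq)$ with $\sigma(\rho_i)=\rho_{i'}$, which gives the claim.

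The main obstacle is the uniqueness step. Theorem~\ref{thm:LHRR solutions} is stated with "$\mathcal{B}_{p(x)}$ is a basis" over $F$, but we need linear independence over $K$ of these sequences, restricted to the indices $j\ge1$ used here. Linear independence over $K$ is the standard statement: a generalized Vandermonde (confluent) determinant in the $\rho_i$ is nonzero, provided all $\rho_i\neq0$. Shifting the index from $j\ge0$ to $j\ge1$ is harmless when the $\rho_i$ are nonzero, since it only multiplies each basis sequence by a nonzero constant. The case $\rho=0$ needs a short remark: its terms contribute only to finitely many initial $a_j$, and $0$ is its own unique conjugate, so it does not affect the statement. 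I would also note that $\sigma$ fixes $j$, so $\sigma(P_i(j))=(\sigma P_i)(j)$.
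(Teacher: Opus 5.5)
Your proposal is correct and takes the same route as the paper: apply $\sigma$ to the closed form, reindex the roots via the induced permutation $\tau$, use uniqueness of the representation to get $\sigma P_i = P_{\tau(i)}$ coefficientwise, and conclude by transitivity of the Galois group on conjugates. Your extra care is not needed for a different argument but fills in points the paper leaves implicit: conjugate roots have equal multiplicity, linear independence holds over $K$, and the index shift to $j \ge 1$ is harmless. The paper also works in the Galois closure of the field generated by the roots together with the coefficients, whereas you note the coefficients already lie in the splitting field $K$.
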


\begin{proof}
    Take the explicit form of our sequence to be
    \[
        a_j = \sum_{i=1}^r \lambda_i(j) \gamma_i^j,
    \]
    where $\gamma_1, \gamma_2, \dots, \gamma_r \in \cc$ are the distinct roots of $p(x)$ and $\lambda_i(x) \in \mathbb{C}[x]$ is the polynomial coefficient to $\gamma_i$ having degree less than the multiplicity of $\gamma_i$ in $p(x)$. Let $S$ be the set of all roots of $p(x)$ and the various coefficients in $\lambda_i(x)$. All roots are clearly algebraic, but the same holds for the coefficients, as they can theoretically be solved for through Cramer's rule, which would only involve algebraic numbers and operations under which the set of algebraic numbers is closed.

    Let $L/\mathbb{Q}$ be a Galois field extension containing all roots and coefficients, which may be found simply as the Galois closure of $\mathbb{Q}(S)$. Consider an arbitrary $\sigma \in \mathrm{Gal}(L/\mathbb{Q})$. For a given $a_j \in \qq$, we see that
    \[
        \sigma(a_j) = \sum_{i=1}^r \sigma(\lambda_i(j)) \sigma(\gamma_i)^j.
    \]
    Clearly, $\sigma(a_j)=a_j$ by our assumption that $a_j\in\mathbb{Q}$. Subtracting $\sigma(a_j)$ from $a_j$ gives
    \[
        \sum_{i=1}^r\lambda_i(j)\gamma_i^j - \sum_{i=1}^r \sigma(\lambda_{i}(j)) \gamma_{\tau(i)}^j = 0.
    \]
    For simplicity, suppose that $\tau \colon \llbracket1,r\rrbracket \to\llbracket1,r\rrbracket$ is defined so that $i \xmapsto{\tau}j$ if $\gamma_i \xmapsto{\sigma} \gamma_j$. We can therefore combine terms by applying $\tau^{-1}$ on the indices of the latter sum, which preserves our expression as $\tau$ is a bijection, to obtain
    \[
        \sum_{i=1}^r (\lambda_i(j) - \sigma(\lambda_{\tau^{-1}(i)}(j))) \gamma_i^j = 0.
    \]
    Let $d_i(x) \in \cc[x]$ denote the difference $\lambda_i(x) - \sigma(\lambda_{\tau^{-1}(i)}(x))$, in which case
    \[
        a_j - \sigma(a_j) = \sum_{i=1}^r d_i(j)\gamma_i^j
    \]
    is a sequence of zeroes. However, any sequence has a canonical general form in terms of exponentials multiplied by polynomials, and since $d_i(x)$ as the zero polynomial would cause $(a_j - \sigma(a_j))_{j \in \nn_0}$ to be a sequence of zeroes as is the case, then by uniqueness it must be that $d_i(x)=0$ for each $i \in \llbracket 1,r \rrbracket$. That is, as polynomials, $\lambda_i(x) = \sigma(\lambda_{\tau^{-1}(i)}(x))$ for each $i$. The coefficients at each degree of $\lambda_i(x)$ and $\lambda_{\tau^{-1}(i)}(x)$ must then be conjugates as equality of the polynomials holds separately along each degree. Since $L$ is normal and contains the splitting field of $q(x)$, then $\mathrm{Gal}(L/\mathbb{Q})$ acts transitively on the set of conjugates of $\gamma$. That is, for any conjugate $\gamma'$ of $\gamma$, there exists $\sigma \in \mathrm{Gal}(L/\mathbb{Q})$ that maps $\gamma$ to $\gamma'$. This establishes that for any conjugate of $\gamma$, the coefficients at the same degree are themselves conjugates.
\end{proof}

As with the alternative proof that we presented in the antimatter case, we consider a recurrence relation that, given some element in $\mathbb{Z}[x]$, finds an equivalent one in $\mathbb{N}_0[x]$. The above characterization is necessary to establish the asymptotic behavior of the recurrence. However, one more lemma stands in our way. The recurrence is not applicable if the number of initial terms is greater than the order of our recurrence.

\begin{theorem}\label{thm:simple-antimatter-is-valuation}
    If $\alpha \in \aaa_{>0}$ is a root of a simple polynomial in $x\nn_0[x]-1$, then $M_\alpha$ is a valuation monoid.
\end{theorem}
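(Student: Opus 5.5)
Let $f(x) = -1 + \sum_{i=1}^{D} b_i x^i \in x\nn_0[x]-1$ be simple with $f(\alpha)=0$, and write $g(x) := -x^D f(x^{-1}) = x^D - \sum_{i=1}^{D} b_i x^{D-i}$ for its negative reciprocal. The plan is to generalize the proof of Proposition~\ref{prop:irrational-valuation-example}, with the Fibonacci recurrence replaced by the recurrence whose characteristic polynomial is $g(x)$.

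First we set up the reduction. Multiplying $1 = \sum_i b_i\alpha^i$ by $\alpha^n$ lets us push any element of $M_\alpha$ to a window: for every large $r$ and every $w \in M_\alpha$ we can write $w = \sum_{i=0}^{D-1} c_i\alpha^{r+i}$ with $c_i \in \nn_0$. Fix $w,w'$ written in the same window and put $G_i := c_i - c'_i$. Repeatedly rewriting the lowest power $\alpha^{s+r}$ via $\alpha^{s+r} = \sum_i b_i \alpha^{s+r+i}$ produces windows at every shift $s$. The coefficient vectors evolve linearly, so each window coefficient is an integer sequence $u^{(i)}_s$ satisfying the linear recurrence with characteristic polynomial $g(x)$.

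If at some shift all window coefficients have the same sign, then $w-w'$ or $w'-w$ lies in $M_\alpha$ and we are done. So we assume the contrary and aim for a contradiction.

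Next we analyze the asymptotics. By Descartes' rule, $g$ has exactly one positive root $\alpha^{-1}$, and it is simple. The argument of Proposition~\ref{prop:antimatter-Perron-number}(2) shows every root has modulus at most $\alpha^{-1}$. Simplicity of $f$, via~\cite[Theorem]{dB94} (as used in Theorem~\ref{thm:antimatter-exact}), should upgrade this to strict inequality for all roots of $g$, not only those of $m_\alpha$. I expect this to need checking: $g$ may have repeated non-dominant roots, which is harmless, but any root of modulus $\alpha^{-1}$ is fatal. Lemma~\ref{lemma:big-and-simple-antimatter-BAGA} may be invoked to enlarge $D$ if the window argument needs it.

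Then we write each sequence in closed form,
\[
u^{(i)}_s = \beta_i \alpha^{-s} + O(\rho^s), \qquad \rho < \alpha^{-1}.
\]
By Lemma~\ref{lem:rational homogeneous linear recurrences}, since $u^{(i)}_s \in \zz$, the dominant coefficient $\beta_i$ is real, being attached to a root fixed by complex conjugation. We will show that the $\beta_i$ are positive multiples of a single real number $\Lambda$, namely the value obtained by evaluating at $\alpha$:
\[
\Lambda := \alpha^{-r}(w-w') = \sum_i G_i\alpha^i.
\]
Concretely, the window identity $w-w' = \sum_i u^{(i)}_s \alpha^{s+r+i}$ holds for all $s$, and the dominant parts are proportional to a fixed positive eigenvector-like vector, namely the tail sums $\sum_{j>i} b_j\alpha^{j-i}$. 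If $\Lambda \neq 0$, then for large $s$ every coefficient has the sign of $\Lambda$. Coefficients whose positive weight vanishes cannot occur, since simplicity of $f$ makes the companion matrix primitive; this is where simplicity enters again.

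The remaining case is $\Lambda = 0$, that is, $w = w'$. Then the window is trivially comparable, contradicting the assumption. Hence $w \mid_{M_\alpha} w'$ or $w' \mid_{M_\alpha} w$, so $M_\alpha$ is a valuation monoid.

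The main obstacle is the strict dominance and positivity step. I would set it up through the companion matrix of $g$: its entries are nonnegative, and it is irreducible and aperiodic exactly because $\gcd \supp f = 1$. Perron--Frobenius then gives a simple dominant eigenvalue $\alpha^{-1}$ with a strictly positive left eigenvector, which replaces the Pisot error estimates of Proposition~\ref{prop:irrational-valuation-example}.
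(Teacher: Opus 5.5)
Your proposal is correct. Its skeleton matches the paper's: your window at shift $s$ is exactly the top block of coefficients of the paper's $P_N(x)=p(x)+n(x)g_N(x)$. Both arguments rest on the recurrence with characteristic polynomial $g(x)=x^D-\sum_i b_i x^{D-i}$ and its simple dominant root $\alpha^{-1}$. The key steps, however, are handled differently.

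\emph{The sign of the dominant term.} The paper only shows that the dominant coefficient $\lambda$ is real and then splits into two cases. For $\lambda\neq 0$ the sign is forced by $p(\alpha)>0$. For $\lambda=0$ it needs Lemma~\ref{lem:rational homogeneous linear recurrences} to kill every conjugate component, then the factorization $n(x)=m_\alpha(x)f(x)$, then a support-separation argument giving $m_\alpha(x)\mid p(x)$. You identify the dominant term outright. It lies in the one-dimensional $\alpha^{-1}$-eigenspace spanned by the tail sums $R_i=\sum_{j>i}b_j\alpha^{j-i}$. Letting $s\to\infty$ in $\alpha^{-r}(w-w')=\sum_i u^{(i)}_s\alpha^{s+i}$, which amounts to pairing with the left eigenvector $(1,\alpha,\dots,\alpha^{D-1})$, gives $\beta_i=\Lambda R_i/\sum_k R_k\alpha^k$. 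So the degenerate case is literally $w=w'$ and no Galois theory is needed. Both Lemma~\ref{lem:rational homogeneous linear recurrences} and Lemma~\ref{lemma:big-and-simple-antimatter-BAGA} become superfluous; the latter because you push $w$ and $w'$ into a common window at a large offset.

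\emph{Strict dominance.} Your route makes explicit something the paper only asserts. Strictness is needed for \emph{all} roots of $g$, not just the conjugates of $\alpha^{-1}$, and the proof of Proposition~\ref{prop:antimatter-Perron-number} yields only the non-strict bound. Primitivity of the companion matrix closes this: its cycle lengths are $\{i : b_i>0\}$, which have gcd $1$ by simplicity. The paper's version stays within closed forms and avoids Perron--Frobenius, but pays for it with the Galois lemma and the case split.

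Two small corrections:
\begin{enumerate}
\item Boyd's theorem concerns irreducible polynomials, so it cannot be applied to the possibly reducible $g$. Rely on Perron--Frobenius instead, or directly on the equality case of $1=|\sum_i b_i\gamma^{-i}|\le\sum_i b_i\alpha^i=1$ for a root $\gamma$ with $|\gamma|=\alpha^{-1}$.
\item Positivity of every $R_i$ already follows from $b_D>0$. Simplicity enters only through aperiodicity.
\end{enumerate}
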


\begin{proof}
    Assume that $\alpha \in \aaa_{>0}$ is a root of a simple polynomial $n(x) \in x\nn_0[x]-1$. Consider the polynomial $p(x) = r_{d-1}x^{d-1} + r_{d-2}x^{d-2} + \dots + r_1 x + r_0 \in \mathbb{Z}[x]$ for some $d\in\mathbb{N}$, and observe that $p(\alpha)$ is the general form for the difference between any two elements in $M_\alpha$. As the case of $p(\alpha)=0$ is clear, it suffices to show that $p(\alpha) \in M_\alpha$ whenever $p(\alpha) > 0$. First, Lemma~\ref{lemma:big-and-simple-antimatter-BAGA} already supplies that $\deg n(x) \ge d$. Thus, we can further assume that $\deg n(x)=d$ simply by padding $p(x)$ with coefficients of zero in case $d < \deg n(x)$. Hence, we may write $n(x) = c_0 x^d + c_1 x^{d-1} + \dots + c_{d-1}x - 1$ for some coefficients $c_0, \dots, c_{d-1} \in \nn_0$ with $c_0 \neq 0$ as having the same degree $d$.

    The rest of the proof consists of manipulating $p(x)$ into a polynomial in $\nn_0[x]$ while not changing its value when evaluated at $\alpha$, similar to that in our proof of Proposition~\ref{prop:lonely-Perroney-simple-antimatter}. Let $N$ be a large positive integer to be determined. We will exhibit a polynomial $g_N(x) \in \mathbb{Z}[x]$ such that $p(x) + n(x)g_N(x) := P_N(x) \in \mathbb N_0[x]$, which will show that $p(\alpha) = P_N(\alpha)$ is indeed an element of $M_\alpha$.

    Define
    \[
        F(x) = x^d - \sum_{i=0}^{d-1} c_ix^i, 
    \] and let $\beta=\alpha^{-1}$ be the positive root $F(x)$, where from Theorem~\ref{thm:antimatter-exact} it must be a Perron number. Set the coefficients of $g_N(x)$ to be determined as $g_N(x) := a_0 + a_1 x + a_2 x^2 + \cdots + a_N x^N$ through the two recurrences below. For each $j \in \llbracket0,d-1\rrbracket$, we define
    \begin{equation} \label{eq:antimatter is valuation first display}
        a_j := r_j + c_{d-1}a_{j-1} + \cdots + c_{d-j}a_0,
    \end{equation}
    while for each $j\in\llbracket d,N\rrbracket$, we set
    \begin{equation} \label{eq:antimatter is valuation second display}
        a_j := c_{d-1}a_{j-1} + c_{d-2}a_{j-2} + \cdots + c_0a_{j-d}.
    \end{equation}

    Observe that the sequence $(a_j)_{j=0}^N$ satisfies the linear homogeneous recurrence described in~\eqref{eq:antimatter is valuation first display} whose initial values $a_0, a_1, \dots, a_{d-1}$ are determined by $r_i$ and $c_i$ as in~\eqref{eq:antimatter is valuation second display}. The characteristic polynomial of this recursion is precisely $F(x)$. Now suppose that the distinct roots of $F(x)$ are $\beta, \gamma_1, \dots, \gamma_s \in \mathbb{C}$ for $s \in \mathbb{N}$, in which case it follows that
    \[
       a_j = \lambda \beta^j + \lambda_1(j) \gamma_1^j + \lambda_2(j) \gamma_2^j + \cdots + \lambda_s(j) \gamma_s^j
    \]
    for some complex polynomials $\lambda_i(x)\in\mathbb{C}[x]$ each having degree less than the multiplicity of $\gamma_i$. The reason that $\beta^j$ has only a constant for its coefficient is that $\beta$ has multiplicity $1$ by Descartes' Rule of Signs.

    Let us first consider the case of $\lambda\in\mathbb{R}\setminus\{0\}$. Note first the inequality $\lvert\gamma_i\rvert < \beta$ for each $i\in\llbracket1,s\rrbracket$ according to Proposition~\ref{prop:antimatter-Perron-number}. Hence, as $\lambda\neq0$, then $a_j\sim\lambda\beta^j$. Therefore, for sufficiently large $N$, the coefficients $a_{N-d}, a_{N-d+1}, \dots, a_N$ are either all positive or all negative depending on the sign of $\lambda$. As each $c_i$ is a nonnegative integer, that implies that the coefficients of $x^{N+1}, x^{N+2}, \dots, x^{N+d}$ in $P_N(x)$ are all of the same sign as well. In addition, for $j \in \llbracket 0,d-1 \rrbracket$, the coefficient of $x^j$ in $P_N(x)$ is
    \[
        [x^j]P_N(x) = r_j + c_{d-1}a_{j-1} + \dots + c_{d-j}a_0 - a_j = 0.
    \]
    In a similar manner, we can see that the coefficient of $x^j$ in $P_N(x)$ for $j \in \llbracket d,N \rrbracket$ is the following:
    \[
        [x^j] P_N(x) = c_{d-1} a_{j-1} + c_{d-2} a_{j-2} + \dots + c_0a_{j-d} - a_j = 0.
    \]
    Both of these equalities are by design. Hence, the support of $P_N(x)$ is contained within $\llbracket N+1,N+d\rrbracket$, meaning that either $P_N(x)$ or its negative lives in $\nn_0[x]$. Of course, if $P_N(x)$ has only non-positive terms, then $p(\alpha) = P_N(\alpha) \leq 0$, a contradiction. This leaves only the possibility of nonnegative terms. Hence, regardless of its sign, showing that $\lambda \in \mathbb{R} \setminus \{0\}$ will suffice.

    Meanwhile, it is easy to find a contradiction for $\lambda\not\in\mathbb{R}$. Again, $a_j$ approaches $\lambda\beta^j$ asymptotically by the dominance of $\beta$. If $\lambda$ is not a real number, $a_j$ would not be real for arbitrarily large $j$, though of course it is as we have an integer recurrence. Therefore, the only other case is when $\lambda = 0$. We proceed to argue that this leaves $p(\alpha)=0$, which contradicts our hypothesis. Lemma~\ref{lem:rational homogeneous linear recurrences}, along with the fact that the orbit of~$0$ under the action of any Galois group consists only of itself, guarantees that the coefficients of $\lambda_k$ for each $k$ such that $\gamma_k^{-1}$ is a root of $m_\alpha(x)$ (again having a multiplicity of $1$) are all zero. Hence, letting $n(x) = m_\alpha(x)f(x)$ for some $f(x) \in \mathbb{Z}[x]$, then $(a_j)_{j \ge 1}$ actually satisfies a recurrence governed solely by the reciprocal polynomial of $f(x)$. The quotient $f(x)$ has integer coefficients as a result of Gauss's lemma.

    Going back to our equation $p(x) + (f(x)m_\alpha(x))g_N(x) = P_N(x)$ from before, we see that $p(x)$ occupies the terms of small degree and $P_N(x)$ those of large degree (when $N$ is large). Specifically, the support lies inside the union $\llbracket 0, d-1 \rrbracket \cup \llbracket N+1, N+d \rrbracket$. However, as $g_N(x)$ has coefficients that already satisfy the recurrence given by the reciprocal polynomial of $f(x)$, the product $f(x)g_N(x)$ itself has only terms of small degree and large degree. After letting $d'$ be the degree of $m_\alpha(x)$, the support contained in $\llbracket 0, d-d'-1 \rrbracket \cup \llbracket N+1, N+d-d' \rrbracket$). In fact, we will explicitly decompose $f(x)g_N(x) := b(x) + c(x)$, where $b(x)$ consists of the bottom terms, i.e., $\supp b(x) \subseteq \llbracket0,d-d'-1\rrbracket$, and $c(x)$ consists of the terms of high degree, i.e., $\supp c(x) \subseteq \llbracket N+1, N+d-d' \rrbracket$. Consider now multiplying the product $f(x)g_N(x)$ by $m_\alpha(x)$, which becomes
    \[
        f(x)g_N(x)m_\alpha(x) = b(x)m_\alpha(x) + c(x)m_\alpha(x).
    \]
    Clearly, the $b(x)m_\alpha(x)$ terms have degrees in $\llbracket0, d-1\rrbracket$ while the $c(x)m_\alpha(x)$ terms have degrees in $\llbracket N+1, N+d \rrbracket$. For sufficiently large $N$, these two sets are disjoint, meaning that there must be exact correspondence. Specifically, $b(x)m_\alpha(x) = -p(x)$ (and $c(x)m_\alpha(x)=P_N(x)$). This shows that $p(x)$ is a multiple of $m_\alpha(x)$, whence $p(\alpha) = 0$.

    Hence, regardless of $\lambda$, we have found that $p(\alpha) \in M_\alpha$.
\end{proof}

We will show two examples illustrating the above argument, the first one not involving double roots and where $p(\alpha)\neq 0$.

\begin{example}
    Set $m_\alpha(x) = x^3+3x^2+2x-1$ be the minimal polynomial of its unique positive root $\alpha$. Since $m_\alpha(x)$ is already a simple polynomial and lies in $x\mathbb{N}_0[x]-1$, the conditions of Theorem~\ref{thm:simple-antimatter-is-valuation} apply with $n(x) = m_\alpha(x)$. Let us show that $4\alpha \mid_{M_\alpha} 1+3\alpha^2$, which corresponds to $p(x) = 1 - 4x + 3x^2$. As $p(\alpha) \approx 0.017453$, we will show that $p(\alpha) \in M_\alpha$. As $\deg n(x) = 3 > 2 = \deg p(x)$, our current $n(x)$ suffices. In particular, $d=3$ and $(c_0,c_1,c_2) = (1,3,2)$. Furthermore, $(r_2,r_1,r_0) = (3,-4,1)$. We now hope to find some $g_N(x)$ such that $p(x) + n(x)g_N(x) = P_N(x) \in \mathbb{N}_0[x]$. The coefficients of $g_N(x)$ satisfy the recurrence as above, with the initial terms chosen so that the early coefficients of $P_N(x)$ are zero. For instance, $a_0=1$ as that is the unique value for which $p(x)+a_0n(x)$ has no constant term. Indeed, $p(x) + n(x) = -2x + 6x^2 + x^3$, which is why $a_1 = -2$. The first several values of $a_j$ are listed.
    \begin{table}[H]
        \centering
        \begin{tabular}{||l r||}
            \hline
            $j$ & $a_j$ \\ [0.5ex]
            \hline\hline
            \rule{0pt}{0.34cm}$0$ & $1$ \\
            $1$ & $-2$ \\
            $2$ & $2$ \\
            $3$ & $-1$ \\
            $4$ & $2$ \\
            $5$ & $3$ \\
            $6$ & $11$ \\
            $7$ & $33$ \\
            \hline
        \end{tabular}
    \end{table}
    As can be seen, the terms initially oscillate for small $j$, but gradually become positive and grow without bound. In particular, we have the closed form $a_j=\lambda\beta^j+\lambda_1\gamma_1^j+\lambda_2\gamma_2^j$, where $\beta,\gamma_1$, and $\gamma_2$ are the roots and are approximately
    \begin{align*}
        \beta&\approx3.07959562349144,\\
        \gamma_1&\approx-0.539797811745719+0.182582254557443i,\\
        \gamma_2&\approx-0.539797811745719-0.182582254557443i.
    \end{align*} Meanwhile, the coefficients are approximately \begin{align*}
        \lambda&\approx0.0126035453089533,\\
        \lambda_1&\approx0.493698227345523+4.12367396014509i,\\
        \lambda_2&\approx0.493698227345523-4.12367396014509i,
    \end{align*}
    which satisfy the irreducible polynomial $23m_\lambda(x) = 23x^3 - 23x^2 + 397x - 5$. As they share a minimal polynomial, then $\lambda,\lambda_1,\lambda_2$ are conjugates, providing evidence for Lemma~\ref{lem:rational homogeneous linear recurrences}. However, this is not necessary in finding an $N$ and $P_N(x) \in \mathbb{N}_0[x]$. We see that if $N=4$ and $g(x) = 1 - 2x + 2x^2 - x^3 + 2x^4$, then
    \[
        p(x) + n(x)g(x) = 3x^5 + 5x^6 + 2x^7 = P_N(x).
    \]
    Because all the coefficients are nonnegative, one obtains that $P_N(\alpha) \in M_\alpha$. Therefore, $p(\alpha) = P_n(\alpha)$ is an element of $M_\alpha$, and $4\alpha + (3\alpha^5 + 5\alpha^6 + 2\alpha^7) = 1 + 3\alpha^2$. Hence $4\alpha \mid_{M_\alpha} 1 + 3\alpha^2$. \hfill $\blacksquare$
\end{example}

Our second example illustrates the case where $\lambda=0$, for which we prove $p(\alpha)=0$. It will also demonstrate our argument about doubling the degree.

\begin{example}
    Consider $\alpha\in\aaa$ satisfying the minimal polynomial $m_\alpha(x) = x^3-x^2+2x-1$. Although $m_\alpha(x)$ is not yet in the desired form of Theorem~\ref{thm:simple-antimatter-is-valuation}, one can readily see that
    \[
        (x+1)m_\alpha(x) = x^4+x^2+x-1 \in x\mathbb{N}_0[x].
    \]
    Let us show that $1+\alpha\mid_{M_\alpha}4\alpha^2+2\alpha^4+\alpha^5$, which corresponds to $p(x) = -1 - x + 4x^2 + 2x^4 + x^5$. In this case, $p(x) = m_\alpha(x)(x^2+3x+1)$, so $p(\alpha)=0$ and the two sides are the same. We will illustrate how this plays out in the above proof. The polynomial we found earlier, $x^4+x^2+x-1$, does not yet have a degree greater than that of $p(x)$, so we will need to double its degree at least once. Notice that
    \[
        (x^4+x^2+x-1)(x^4+1) = x^8+x^6+x^5+x^2+x-1,
    \]
    which clearly remains simple. As the degree of this new polynomial is greater than that of $p(x)$, we may take $n(x) = x^8 + x^6 + x^5 + x^2 + x - 1$ and $d=8$. Performing the recurrence produces some selected terms as follows.
    \begin{table}[H]
        \centering
        \begin{tabular}{||l r||}
            \hline
            $j$ & $a_j$ \\ [0.5ex]
            \hline\hline
            \rule{0pt}{0.34cm}$0$ & $-1$ \\
            $1$ & $-2$ \\
            $2$ & $1$ \\
            $3$ & $-1$ \\
            $4$ & $2$ \\
            $5$ & $1$ \\
            $6$ & $0$ \\
            $7$ & $0$ \\
            $8$ & $-1$ \\
            $9$ & $-2$ \\
            \hline
        \end{tabular}
        \hspace{1em}
        \begin{tabular}{||l r||}
            \hline
            $j$ & $a_j$ \\ [0.5ex]
            \hline\hline
            \rule{0pt}{0.969em}$88$ & $-1$ \\
            $89$ & $-2$ \\
            $90$ & $1$ \\
            $91$ & $-1$ \\
            $92$ & $2$ \\
            $93$ & $1$ \\
            $94$ & $0$ \\
            $95$ & $0$ \\
            $96$ & $-1$ \\
            $97$ & $-2$ \\
            \hline
        \end{tabular}
    \end{table}
    \noindent The low magnitude of terms after nearly $100$ of them indicates that $\lambda=0$, and the apparent periodicity indicates they may never become completely positive. Further, when $N=100$,
    \[
         P_N(x) = x^{101} - x^{102} - x^{103} - x^{104} - x^{105} + 3x^{106} - x^{107} + 2x^{108},
    \]
    and the many negative terms suggests that the recurrence is unlikely ever to yield a polynomial in $\mathbb{N}_0[x]$. Of course, even though this particular sequence is periodic, this is not in general true. Hence the easiest solution would be for $p(\alpha) = 0$, and that is precisely what we will proceed to demonstrate systematically. First, we compute the coefficients, and, indeed, we find that the general form is
    \begin{align*}
        a_j&=\left(\left(-\frac{1}{8}+\frac{\sqrt{2}}{4}\right)i-\left(\frac{3}{8}+\frac{\sqrt{2}}{8}\right)\right)\left(\frac{\sqrt{2}}{2}+\frac{\sqrt{2}}{2}i\right)^j \\
        &+\left(\left(\frac{1}{8}-\frac{\sqrt{2}}{4}\right)i-\left(\frac{3}{8}+\frac{\sqrt{2}}{8}\right)\right)\left(\frac{\sqrt{2}}{2}-\frac{\sqrt{2}}{2}i\right)^j \\
        &+\left(\left(\frac{1}{8}+\frac{\sqrt{2}}{4}\right)i-\left(\frac{3}{8}-\frac{\sqrt{2}}{8}\right)\right)\left(-\frac{\sqrt{2}}{2}+\frac{\sqrt{2}}{2}i\right)^j \\
        &+\left(\left(-\frac{1}{8}-\frac{\sqrt{2}}{4}\right)i-\left(\frac{3}{8}-\frac{\sqrt{2}}{8}\right)\right)\left(-\frac{\sqrt{2}}{2}-\frac{\sqrt{2}}{2}i\right)^j \\
        &+\left(\dfrac{1}{2}\right)(-1)^j,
    \end{align*}
    and as we have roots of unity, there will indeed be periodicity (in particular, as the least common multiple of the orders is $8$, then the period is $8$ as seen in the table). We will then note that none of the roots with nonzero coefficients are roots to $r_\alpha(x)$. In fact, as soon as we observe that $\lambda$, the coefficient of $\beta$, is equal to zero, we are guaranteed that all other coefficients of conjugates of $\beta$ are zero by Lemma~\ref{lem:rational homogeneous linear recurrences}. This implies that $g_N(x)$ actually satisfies a restricted recurrence of
    \[
        \frac{n(x)}{m_\alpha(x)} = (x+1)(x^4+1) = x^5+x^4+x+1.
    \]
    We denote the quotient by $f(x)$, and observe that $f(x)$ is an integer polynomial, which is guaranteed by Gauss's lemma. Indeed, for some large $N$ such as $N=100$, we obtain that
    \[
        g_N(x) f(x) = 2x^{105} + x^{104} - x^{102} - x^{101} - x^2 - 3x - 1.
    \]
    As expected, we see both high terms and low terms, and nothing in between. Here we would decompose $b(x)=-x^2-3x-1$ and $c(x)=2x^{105}+x^{104}-x^{102}-x^{101}$, where $\supp b(x)\in\llbracket0,8-3-1\rrbracket$ and $\supp c(x)\in\llbracket100+1,100+8-3\rrbracket$. Indeed, $-b(x)=x^2+3x+1$ is recognizable from above, and we do see that $-b(x)m_\alpha(x) = p(x)$. This follows systematically from the fact that $p(x)$ occupies the small terms and $P_N(x)$ the large ones. Therefore, $p(\alpha)=0$, as desired. \hfill $\blacksquare$
\end{example}

The two examples illustrate the two main cases of the proof being quite intricate and essential. In particular, it naturally yields the following results.

\begin{theorem}
    For any $\alpha \in \aaa \cap (0,1)$, the following conditions are equivalent.
    \begin{enumerate}
        \item [(1a)] $M_\alpha$ is a simple antimatter monoid.
        \smallskip

        \item [(1b)] $\alpha^{-1}$ is a Perron number with no positive conjugate.
        \smallskip

        \item [(1c)] There exists a simple polynomial $n(x) \in x\nn_0[x]-1$ satisfying $n(\alpha) = 0$.
        \smallskip

        \item [(1d)] $M_\alpha$ is a valuation monoid.
        \smallskip

        \item [(1e)] $M_\alpha$ is a simple GCD monoid.
    \end{enumerate}
    \smallskip

    \noindent For any algebraic $\alpha \in (0,1)$, the following conditions are also equivalent. 
    \begin{enumerate}
        \item [(2a)] $M_\alpha$ is an antimatter monoid.
        \smallskip

        \item [(2b)] The simplified polynomial of $r_\alpha(x)$ has a root that is a Perron number and $\alpha$ has no positive conjugate aside from itself. 
        \smallskip

        \item [(2c)] There exists a polynomial $n(x) \in x\nn_0[x]-1$ satisfying $n(\alpha) = 0$.
        \smallskip

        \item [(2d)] $M_\alpha$ is the product of valuation monoids. In particular, if $n=\gcd \supp m_\alpha(x)$, then $M_{\alpha^n}$ is a valuation monoid and $M_\alpha\cong M_{\alpha^n}^n$.
        \smallskip

        \item [(2e)] $M_\alpha$ is a GCD monoid.
    \end{enumerate}
\end{theorem}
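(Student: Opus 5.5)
The plan is to close a cycle of implications for the first block and then transport each condition along Proposition~\ref{prop:simple-product} for the second.

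\emph{First block.} The equivalences (1a) $\Leftrightarrow$ (1b) $\Leftrightarrow$ (1c) are essentially Theorem~\ref{thm:antimatter-exact}. Condition (1c) there reads ``$\alpha^{-1}$ Perron and $m_\alpha(x)$ has a simple multiple in $x\nn_0[x]-1$''. Given our (1c), the Perron condition is recovered through (1c) $\Rightarrow$ (1a) $\Rightarrow$ (1b): a simple $n(x)$ gives an antimatter decomposition of $1$, so $M_\alpha$ is antimatter by \cite[Theorem~4.2]{CG22}. For simplicity of $m_\alpha(x)$, I would argue as follows. If $m_\alpha(x)=m(x^k)$ with $k\ge 2$, then $\zeta\alpha$ is a conjugate of $\alpha$ for each $k$th root of unity $\zeta$. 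Hence $n(\zeta\alpha)=0$, and the Eneström--Kakeya/Descartes-type argument in Proposition~\ref{prop:antimatter-Perron-number}(2) becomes an equality case. The triangle inequality $1=|\sum c_i(\zeta\alpha)^i|\le \sum c_i\alpha^i=1$ is then tight, which forces $\zeta^i=1$ for all $i\in\supp n(x)$. This contradicts simplicity of $n(x)$.

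Next, (1c) $\Rightarrow$ (1d) is exactly Theorem~\ref{thm:simple-antimatter-is-valuation}. For (1d) $\Rightarrow$ (1e), a valuation monoid is GCD. Simplicity follows because a non-simple $m_\alpha$ makes $M_\alpha\cong M_{\alpha^k}^k$ with $k\ge 2$ by Proposition~\ref{prop:simple-product}, and a product of two nontrivial reduced monoids is never valuation, since $(a,0)$ and $(0,a)$ are incomparable.

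The implication (1e) $\Rightarrow$ (1a) is the main obstacle. If $M_\alpha$ is not antimatter, it is atomic by \cite[Theorem~4.2]{CG22}. In a GCD monoid atoms are primes, so $M_\alpha$ would be a UFM, i.e.\ free on its atoms. However, the atoms are among the powers $\alpha^n$, and these satisfy the nontrivial relation coming from $m_\alpha(\alpha)=0$ written through its minimal pair, $p_\alpha(\alpha)=q_\alpha(\alpha)$. I would have to check that this relation involves atoms and gives two distinct factorizations. Care is needed when some $\alpha^n$ are not atoms (the finitely generated case), but the relation $p_\alpha=q_\alpha$ still yields distinct factorizations after rewriting in atoms, because the factorization map identifies factorizations with polynomials. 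The only escape is $M_\alpha\cong\nn_0$, which is impossible for $\alpha\in(0,1)$.

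\emph{Second block.} Write $n=\gcd\supp m_\alpha(x)$, so $m_\alpha(x)=m(x^n)$ with $m$ simple and $\alpha^n\in(0,1)$ having minimal polynomial $m$. By Proposition~\ref{prop:simple-product}, $M_\alpha\cong M_{\alpha^n}^n$. Being antimatter and being GCD both pass to and from finite products with identical nontrivial factors, so (2a) $\Leftrightarrow$ [$M_{\alpha^n}$ antimatter] $\Leftrightarrow$ (1a) for $\alpha^n$ $\Leftrightarrow$ (1d) for $\alpha^n$ $\Leftrightarrow$ (2d). Similarly, (2e) $\Leftrightarrow$ [$M_{\alpha^n}$ GCD] $\Leftrightarrow$ (1e) for $\alpha^n$.

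The condition (2b) is Corollary~\ref{cor:antimatter-exact}, which applies since the positive conjugates of $\alpha$ correspond to those of $\alpha^n$. For (2c), a polynomial $n_0(y)$ from (1c) for $\alpha^n$ gives $n_0(x^n)\in x\nn_0[x]-1$ vanishing at $\alpha$. Conversely, any such polynomial makes $1$ a non-atom, which gives (2a) via \cite[Theorem~4.2]{CG22}.
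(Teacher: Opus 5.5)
Your proposal is correct and has the same skeleton as the paper's proof. The antimatter conditions come from Theorem~\ref{thm:antimatter-exact} and Corollary~\ref{cor:antimatter-exact}, (1c)$\Rightarrow$(1d) is Theorem~\ref{thm:simple-antimatter-is-valuation}, and simplicity is forced by the incomparability of $(1,0)$ and $(0,1)$ in a nontrivial product. The GCD conditions use ``atomic GCD implies UFM'', and Proposition~\ref{prop:simple-product} transports everything to the second block.

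Two local differences are worth keeping:
\begin{itemize}
\item Your triangle-inequality argument proves (1c)$\Rightarrow$(1a) directly. The paper's citation of Theorem~\ref{thm:antimatter-exact} does not literally cover this step, because condition (c) there also assumes $\alpha^{-1}$ is Perron. The paper's cycle really obtains this implication only through (1d).
\item Your minimal-pair argument replaces the paper's citation of \cite[Theorem~5.4]{CG22} for the failure of unique factorization.
\end{itemize}

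One correction is needed in that minimal-pair argument. You claim that $p_\alpha(\alpha)=q_\alpha(\alpha)$ still yields two distinct factorizations when only finitely many $\alpha^n$ are atoms. This is false in general, and so is ``the only escape is $M_\alpha\cong\nn_0$''. For $\alpha=\sqrt2$, both sides rewrite in atoms as $1+1$, and $M_{\sqrt2}\cong\nn_0^2$ is a UFM.

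The case simply never arises for $\alpha\in(0,1)$, so you should rule it out rather than handle it. Since $\alpha^n\to 0$, $M_\alpha$ has no least positive element and so cannot be finitely generated. By the dichotomy recalled at the start of Section~\ref{sec:antimatter}, an atomic $M_\alpha$ therefore has every $\alpha^n$ as an atom. Its factorization monoid is then all of $\nn_0[x]$, where $p_\alpha\neq q_\alpha$ are two distinct factorizations of the same element.
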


The first set of equivalences mimics Theorem~\ref{thm:antimatter-exact}, while the second Corollary~\ref{cor:antimatter-exact}.

\begin{proof}
    The equivalence of (1a), (1b), and (1c) is by Theorem~\ref{thm:antimatter-exact}, while that of (2a), (2b), and (2c) follows from Corollary~\ref{cor:antimatter-exact}. Meanwhile, (1c) implies (1d) by Theorem~\ref{thm:simple-antimatter-is-valuation}, which itself implies (1a) from the fact that nontrivial products (products where neither element is a group) are never valuation, which forces $M_\alpha$ to be simple, as well as $\alpha\in(0,1)$, which ensures $M_\alpha$ is antimatter. A similar set of results shows that (2a), (2b), (2c), and (2d) are equivalent.

    Finally, to show our results about GCD monoids, observe that an atomic GCD monoid is necessarily a UFM. However, the factorial $M_\alpha$ were characterized in~\cite[Theorem 5.4]{CG22}; in particular, for algebraic $\alpha$, $M_\alpha$ cannot be factorial when $0<\alpha<1$. As a result, the given restrictions on $\alpha$ demonstrate that (2e) implies (2a), and likewise for (1e) and (1a). That being said, it is routine to show that every valuation monoid is a GCD monoid, which shows (1d) implies (1e). Likewise, (2d) implies (2e) because the product of GCD monoids is a GCD monoid.
\end{proof}


\medskip
\subsection{The Valuation Set}

Given the exact characterization of both the antimatter and valuation $M_\alpha$, it is natural to turn our attention to the class of antimatter or valuation monoids as a whole. Our first result follows from the trivial fact that when $\alpha$ has no positive conjugate, $M_\alpha$ is an abelian group and, hence, both antimatter and valuation. We remark that, as each transcendental number generates an atomic monoid with infinitely many atoms, the set of antimatter or valuation monoids generated in this way is at most countable.

\begin{proposition}
    The set of $\alpha \in \mathbb{C}$ such that the monoid $M_\alpha$ is a valuation (alternatively, antimatter) is dense in the complex plane. 
\end{proposition}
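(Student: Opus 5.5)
The remark preceding the statement suggests the approach: if $\alpha\in\aaa$ has no positive real conjugate, then $M_\alpha$ is an abelian group, hence trivially both antimatter and a valuation monoid. It therefore suffices to show that the set of algebraic numbers with no positive real conjugate is dense in $\cc$. First I would confirm the group claim. By \cite[Theorem~4.2]{CG22} and the identification of factorizations with polynomials, $-1\in M_\alpha$ holds precisely when some $f(x)\in\nn_0[x]$ satisfies $f(\alpha)+1=0$, that is, when $m_\alpha(x)$ divides a polynomial in $\nn_0[x]+1$. The relevant classical fact is Pólya-type: a polynomial with no positive real roots has a multiple with positive coefficients. More precisely, if $m_\alpha(x)$ has no root in $\rr_{\ge 0}$, then some nonzero multiple $h(x)m_\alpha(x)$ has all coefficients positive, with $h(x)\in\zz[x]$ after clearing denominators. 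From this one wants $-1=f(\alpha)$, i.e.\ $1+f(\alpha)=0$ with $f\in\nn_0[x]$. Since $\alpha$ is a root of $h m_\alpha\in\nn[x]$ with positive constant term $c$, we get $c+\sum_{i\ge1}e_i\alpha^i=0$. Hence $-c\in M_\alpha$, so $-1$ is a $c$-fold summand of $(c-1)\cdot 1+(-c)$, giving $-1\in M_\alpha$. Then $M_\alpha$ contains $\alpha^n$ and $-\alpha^n$ for all $n$, so it is the group $\zz[\alpha]$. (If $\alpha=0$ is excluded this is fine; $0$ is a limit point anyway.)

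Next comes density. I would take the explicit family $\alpha$ with $m_\alpha$ of the form $x^2-2ax+b$ or simply rational points. Given $z\in\cc$ and $\varepsilon>0$, choose $z'=p+qi$ with $p,q\in\qq$ and $q\neq0$ and $|z-z'|<\varepsilon$. The minimal polynomial $(x-p)^2+q^2$ has no real roots, so $z'$ works. For real targets $z$, approximate by nonreal $z'$ anyway, since density is in $\cc$. Alternatively, to stay inside $\rr$, negative rationals $-r$ qualify (conjugate $-r<0$), giving density in $\rr_{<0}$ but not $\rr_{>0}$; the complex approximation handles everything uniformly.

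The main obstacle is justifying the Pólya step rigorously for an irreducible $m_\alpha$ with no positive real root: the root may be nonreal, or real negative. I would cite Pólya's theorem, as sharpened by Meissner, that a real polynomial positive on $(0,\infty)$, or here with no roots in $[0,\infty)$ after normalizing sign, admits a multiplier $(1+x)^N$ making all coefficients positive. This is the same mechanism as \cite[Theorem~5]{dH92} used in Proposition~\ref{prop:lonely-Perroney-simple-antimatter}, and I would invoke it in that form. For the explicit quadratic case one can even verify this directly: multiplying $(x-p)^2+q^2$ by $(1+x)^N$ with $N$ large yields positive coefficients. For completeness, this argument gives the stated density for both properties simultaneously.
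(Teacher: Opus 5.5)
Your proposal is correct and is essentially the paper's proof. The paper also uses the Gaussian rationals off $\mathbb{R}_{\ge 0}$, notes that they have no positive conjugates, and concludes that $M_\alpha$ is an abelian group, hence both valuation and antimatter. The only difference is that the paper cites \cite{aD07} for the group property, whereas you prove it yourself via the P\'olya--Meissner multiplier $(1+x)^N$ and the observation that $-c \in M_\alpha$ gives $-1 = (c-1) + (-c) \in M_\alpha$; this is a valid self-contained argument.
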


\begin{proof}
    Let $S$ be the set consisting of all Gaussian rationals that are not nonnegative real numbers, $S = (\mathbb{Q}+i\mathbb{Q})\setminus\mathbb{R}_{\ge 0}$. As the rank of each $\alpha \in S$ is either one or two, the algebraic conjugates of $\alpha$ are either negative (when $\alpha \in \qq_{<0}$) or nonreal (when $\alpha \in \cc \setminus \rr$). In either case, $\alpha$ has no positive conjugates. Therefore, $M_\alpha$ is an abelian group for all $\alpha \in S$ in light of~\cite{aD07}. As $S$ is dense in $\cc$, the fact that every abelian group is, in a trivial way, both a valuation monoid and an antimatter monoid concludes our proof.
\end{proof}

We may generalize to density in terms of minimal polynomials as opposed to in the complex plane. In particular, the literature contains several results about the distribution of polynomials with no real roots. This gives a lower bound for the measure of polynomials with no positive root, which itself yields the trivial examples of valuation and antimatter monoids. For $d, s \in \nn$, we let $V_d^\ast(s)$ denote the set consisting of all vectors $(c_0, c_1, \dots, c_d) \in \mathbb{Z}^{d+1}$ such that $c_d x^d + \dots + c_1 x + c_0 \in \zz[x]$ is a degree-$d$ polynomial with exactly $2s$ nonreal roots (counting multiplicity). To later compute our limiting density, we begin with a parameter $B \in \rr_{\ge 1}$, which represents either a bound on the coefficients or on the roots---making the problem more tractable. First, let $\mathcal{D}_d^\ast(s, B)$ denote the subset of $V_d^\ast(s)$ with height bounded by $B$, i.e., $\mathcal{D}_d^\ast(s,B) = V_d^\ast(s) \cap \ldb -B, B \rdb^{d+1}$ where each coefficient has absolute value at most $B$. Meanwhile, $\mathcal{N}_d^\ast(s,B)$ denotes those polynomials each of whose roots is of distance at most $B$ from the origin. If we further restrict to monic polynomials, we drop the asterisk, giving $\mathcal{D}_d(s,B)$ and $\mathcal{N}_d(s,B)$.

A natural way to define the density, then, is to take the ratio $$D_d^\ast=\lim_{B\to\infty}\frac{|\mathcal{D}_d^\ast(\lfloor d/2\rfloor,B)|}{\lvert\mathcal{D}_d^\ast(0,B)\rvert+\lvert\mathcal{D}_d^\ast(1,B)\rvert+\cdots+\lvert\mathcal{D}_d^\ast(\lfloor d/2\rfloor,B)\rvert}$$ or the alternatives $D_d$, $N_d^\ast$, $N_d$ defined without the star or by replacing $\mathcal D$ with $\mathcal N$, each of which counts the proportion of polynomials satisfying the given polynomial restraints that also satisfy the constraint on the number of roots, namely having at most one real root. In particular, when $d$ is even, then $\lfloor d/2\rfloor=d/2$, so each polynomial represented in the numerator will have no real roots. When $d$ is odd, there will be one real root, but negating the coefficients pairs each polynomial with a positive root with one with a negative root. 

\begin{proposition}
    The following statements hold. \begin{enumerate}
        \item [(1)] $D_d=0$ when $d$ is even.
        \smallskip
        
        \item [(2)] $D_d^\ast>0$ for all $d$. Further, when $d$ is odd, $D_{d+1}^\ast=D_d$.
        \smallskip
        
        \item [(3)] $N_d>0$ for all $d$. Further, when $d$ is even, $N_d$ is asymptotic to $c_dd^{-3/8}$ for some $c_d>0$.
    \end{enumerate}
\end{proposition}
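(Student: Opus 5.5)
The plan is to reduce each density to a ratio of Euclidean volumes and then treat the three statements separately. For the height-bounded counts, $|\mathcal{D}_d^\ast(s,B)|$ is, up to $O(B^{d})$, equal to the volume of $\{c \in [-B,B]^{d+1} : \text{the polynomial } c \text{ has exactly } 2s \text{ nonreal roots}\}$. The reason is that the boundary between root-type regions lies in the discriminant hypersurface, which is an algebraic set of codimension one, so its lattice-point contribution is of lower order. The map $c \mapsto Bc$ preserves roots, so the nonmonic region is a cone. Hence $D_d^\ast = \mathrm{vol}(R_d)/\mathrm{vol}([-1,1]^{d+1})$, where $R_d \subset [-1,1]^{d+1}$ is the set of polynomials with exactly $2\lfloor d/2\rfloor$ nonreal roots. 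The denominator in the definition of $D_d^\ast$ counts essentially every vector, because the discriminant locus has measure zero.

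For (2), it then suffices to exhibit a nonempty open subset of $R_d$. For $d$ even, a small neighbourhood of $x^d+1$ works, since having no real root is an open condition on polynomials of fixed degree. For $d$ odd, a neighbourhood of $x(x^{d-1}+1)$ works, since a simple real root together with $d-1$ nonreal roots persists under small perturbation. This gives $D_d^\ast>0$.

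For the monic counts, the coefficient box is no longer a cone, and we analyse what happens as $B\to\infty$. Write $c_i = B t_i$ with $t \in [-1,1]^d$ and substitute $x = By$. Then
\[
B^{-d}\bigl(x^d + \textstyle\sum_i c_i x^i\bigr) = y^{d-1}(y+t_{d-1}) + B^{-1}\bigl(t_{d-2}y^{d-2}+\cdots\bigr) + \cdots .
\]
So, for almost every $t$, one root sits near $-Bt_{d-1}$, which is real. By Rouché's theorem on a small disk around that root, together with a separate rescaling $x = y$ on bounded regions, the remaining $d-1$ roots are asymptotically the roots of the nonmonic polynomial $t_{d-1}x^{d-1}+t_{d-2}x^{d-2}+\cdots+t_0$ with coefficients uniform in a cube.

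From this, (1) follows at once. For $d$ even, a monic polynomial has at least the large real root, so with probability $1-o(1)$ it has a real root. Thus the numerator, which counts polynomials with no real roots, is $o(B^d)$, and $D_d=0$. The same decomposition identifies the monic density in odd degree with the nonmonic density in the adjacent even degree, which gives the stated relation between $D_{d+1}^\ast$ and $D_d$. The index bookkeeping must be checked carefully: the reduced polynomial has degree $d-1$, so one has to match exactly which degree appears on each side of the equality.

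The main technical obstacle is making the root-separation step rigorous uniformly in $t$. The plan is to exclude a set of $t$ of measure $\varepsilon$ near $t_{d-1}=0$ and near the discriminant of the reduced polynomial. On the complement, a Rouché argument and continuity of roots give the correspondence of real-root counts for all $B \ge B_0(\varepsilon)$; then let $\varepsilon\to0$.

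For (3), polynomials with all roots in the disk of radius $B$ rescale, via $x\mapsto Bx$, to polynomials with all roots in the unit disk. The coefficient map sends these homeomorphically, up to measure zero, onto the region of monic polynomials with roots in $|z|\le 1$ of each real/complex type. So $N_d$ becomes a ratio of the volumes of these regions. The volume of the region of monic polynomials with all roots in the unit disk and exactly $s$ complex-conjugate pairs has been computed in closed form (Akiyama--Pethő; see also Kirschenhofer and Weitzer). Positivity follows as in (2). For $d$ even, the $d^{-3/8}$ asymptotic comes from applying Stirling's formula to that closed form. We expect to cite this computation rather than redo it; the main risk is confirming that the quoted asymptotic is for the proportion with $s=d/2$, rather than for a related quantity.
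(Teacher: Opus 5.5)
Your treatment of (1), of $D_d^\ast>0$, and of (3) is sound. For (1) and (2) you replace the paper's bare citations (Dubickas; Bert\'ok--Hajdu--Peth\H{o}) with a self-contained argument: signature regions are cones, so $D_d^\ast$ is a volume ratio, and $x^d+B\sum_{i<d}t_ix^i$ splits into one real root near $-Bt_{d-1}$ plus $d-1$ bounded roots. For (3) you cite Akiyama--Peth\H{o}, as the paper does.

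\textbf{The gap is the second clause of (2).} Your decomposition sends the $d-1$ bounded roots to the roots of $t_{d-1}x^{d-1}+\dots+t_0$, with $(t_0,\dots,t_{d-1})$ uniform in $[-1,1]^d$. That is the limiting model for $D_{d-1}^\ast$, not $D_{d+1}^\ast$. A dimension count points the same way: monic degree $d$ and nonmonic degree $d-1$ both have $d$ free coefficients. So for odd $d$ your argument proves $D_d=D_{d-1}^\ast$, equivalently $D_{d+1}=D_d^\ast$ for even $d$. Your sentence asserting that the decomposition ``gives the stated relation'' is therefore unsupported. The index bookkeeping you postpone is precisely the step that fails.

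In fact the clause as printed cannot be proved. For $d=1$, every $x+c_0$ has no nonreal roots, so $D_1=1$. On the other hand, $D_2^\ast=\Pr(b^2<4ac)$ for independent $a,b,c$ uniform on $[-1,1]$, which equals $(31-6\ln 2)/72\approx 0.37$. The paper's proof consists only of citations and does not catch this index shift. The right move is to state and prove the shifted identity $D_{d+1}=D_d^\ast$ for even $d$ and flag the misprint, rather than claim the statement as written.

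Relatedly, in (3) the assertion ``$N_d\sim c_d\, d^{-3/8}$'' has content only if the constant is independent of $d$. When you cite the Akiyama--Peth\H{o} asymptotic, record it in that form.
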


\begin{proof}
    (1) follows from~\cite[Theorem~1.1]{aD18}, while (2) follows from~\cite[Theorem~2.1]{BHP17} and~\cite[Theorem 1.2]{aD18}. Meanwhile, (3) follows from combining~\cite[Corollary~3.2]{AP14b} with~\cite[Theorem~6.1]{AP14a}.
\end{proof}

Therefore, the limiting density of the class of antimatter and valuation monoids is nonzero for a variety of definitions of limiting density. 
As these results readily follow from what is already known, we restrict our discussion to nontrivial $M_\alpha$ by taking $\alpha \in (0,1)$. Then we set
\[
	V := \big\{ \, \alpha \in (0,1) :  M_\alpha  \text{ is a valuation monoid } \big\},
\]
and we prove a variety of results about the size and density of $V$ even in this nontrivial case.

\begin{theorem}
    For each $d \in \nn$, there exist infinitely many pairwise non-isomorphic valuation monoids $M_\alpha$ having rank~$d$. 
\end{theorem}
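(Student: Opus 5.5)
The plan is to produce, for each fixed $d$, an infinite family of parameters $\alpha_n \in (0,1)$ of degree exactly $d$ such that each $\alpha_n$ is a root of a simple polynomial in $x\nn_0[x]-1$. Theorem~\ref{thm:simple-antimatter-is-valuation} then makes each $M_{\alpha_n}$ a valuation monoid. Each such monoid has rank $d$, since $M_\alpha$ spans the $\qq$-vector space $\qq(\alpha)$ of dimension $\deg m_\alpha(x)$. The case $d=1$ is already covered by $\alpha = 1/n$ for $n \ge 2$: the monoids $M_{1/n} = \zz[1/n]_{\ge 0}$ are pairwise non-isomorphic because the sets of primes dividing $n$ differ (for instance, $n$ prime). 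For $d \ge 2$ I would generalize the polynomial used in Proposition~\ref{prop:irrational-valuation-example} and take
\[
	f_n(x) := n x^d + x^{d-1} + \cdots + x - 1 \quad \text{ or } \quad f_n(x) := x^d + \cdots + x^2 + n x - 1
\]
for $n \in \nn$. Each of these is simple because $1 \in \supp f_n(x)$.

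The first step is irreducibility of $f_n(x)$. I would choose the family so that the reciprocal $x^d - n x^{d-1} - \cdots - 1$ (the second family) or $x^d - x^{d-1} - \cdots - n$ can be handled by a Perron/Pisot-type argument. The dominant root is $\beta_n \approx n$, and all other roots have modulus less than $1$ once $n$ is large: a Rouché comparison of $x^d - n x^{d-1}$ with the remaining terms on $|x| = 1$ does this, much as in~\cite{aB50}. A polynomial with integer coefficients, constant term $\pm 1$, one root outside the unit disk and the rest strictly inside must be irreducible. Indeed, any factor not containing $\beta_n$ would have all its roots inside the unit disk while its constant term is a nonzero integer, which is impossible. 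Hence $f_n(x) = -x^d r(1/x)$ is, up to sign, the minimal polynomial of $\alpha_n := \beta_n^{-1}$, and $\deg \alpha_n = d$. By Descartes' rule, $\alpha_n \in (0,1)$ is the unique positive root. This irreducibility step is the main obstacle, and Rouché on the unit circle is what I would try first.

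The second step is to show that infinitely many of these monoids are pairwise non-isomorphic. I would use an invariant of $M_\alpha$ as an abstract monoid. Since $M_\alpha$ is a valuation monoid, its quotient group is $\zz[\alpha]$, because $\alpha^{-1} = f_n(\alpha)+1$ divided by $\alpha$ lies in $\zz[\alpha]$. An isomorphism of monoids extends to an isomorphism of the ordered abelian groups $\zz[\alpha_n] \cong \zz[\alpha_m]$ with positive cones $M_{\alpha_n}$ and $M_{\alpha_m}$. By the valuation property, this positive cone is exactly the set of positive elements in the real order. A rank-$d$ totally ordered group embedded in $\rr$ is determined by its embedding up to a positive scalar (Hölder). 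So an isomorphism would give $\zz[\alpha_n] = \lambda \zz[\alpha_m]$ for some $\lambda > 0$, hence $\qq(\alpha_n) = \qq(\alpha_m)$. I would then distinguish the fields, for example by discriminants or by the fact that $\alpha_n^{-1} \to \infty$. Fallback: each field $\qq(\alpha_n)$ contains only finitely many units of bounded height, so infinitely many distinct $\alpha_n$ cannot all lie in finitely many fields with the required scaling. If pinning down an exact invariant proves awkward, I would settle for showing that each isomorphism class contains only finitely many $n$, which still yields infinitely many classes.
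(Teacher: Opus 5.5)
There is a genuine gap in the pairwise non-isomorphism step. Your reduction is correct up to a point: valuation gives $M_\alpha=\zz[\alpha]\cap\rr_{\ge0}$, and H\"older then gives $\zz[\alpha_n]=\lambda\zz[\alpha_m]$ for some $\lambda>0$. But the invariant you extract from this, the field $\qq(\alpha_n)$, is too coarse, and your own family defeats it. For $d=2$, $\alpha_n^{-1}$ is the root of $x^2-nx-1$. For $n=4,11,29,\dots$ (odd-indexed Lucas numbers) this root is $\varphi^3,\varphi^5,\varphi^7,\dots$, so all of these lie in $\qq(\sqrt5)$. None of your proposed ways to separate fields works:
\begin{itemize}
\item growth of the polynomial discriminant does not separate fields, since it differs from the field discriminant by a square index;
\item $\alpha_n^{-1}\to\infty$ is not an invariant of the field;
\item the ``units of bounded height'' fallback says nothing, because the heights of the $\alpha_n$ are unbounded.
\end{itemize}
For comparison, the paper only observes that the minimal polynomials of its parameters differ, which does not address monoid isomorphism at all. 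You are right that a genuine invariant is needed; the field just isn't fine enough.

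The missing idea is that both groups are rings containing $1$. From $\zz[\alpha_n]=\lambda\zz[\alpha_m]$ you get $\lambda\in\zz[\alpha_n]$ and $\lambda^{-1}\in\zz[\alpha_m]$. Then $\lambda^2\in\zz[\alpha_n]=\lambda\zz[\alpha_m]$ gives $\lambda\in\zz[\alpha_m]$. So $\lambda$ is a unit of $\zz[\alpha_m]$, and $\zz[\alpha_n]=\zz[\alpha_m]$ as subsets of $\rr$. Equal orders have equal discriminants. The discriminant of the order $\zz[\alpha_n]$ is that of $x^d+\cdots+x^2+nx-1$, equivalently of $F_n(x)=x^d-nx^{d-1}-x^{d-2}-\cdots-1$. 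Its absolute value is of order $(d-1)^{d-1}n^d$: one root of $F_n$ is near $n$ and the other $d-1$ are near the roots of $x^{d-1}=-1/n$; for $d=2$ it is exactly $n^2+4$. Hence each isomorphism class contains only finitely many $n$, which gives infinitely many classes.

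Two smaller points.
\begin{itemize}
\item Drop your first family. $2x^d+x^{d-1}+\cdots+x-1$ vanishes at $\frac12$ for every $d$, and the roots of $x^d-x^{d-1}-\cdots-x-n$ all have modulus about $n^{1/d}$. So neither ``dominant root $\approx n$'' nor Rouch\'e on $|x|=1$ applies to it.
\item For $F_n$, your Rouch\'e-plus-constant-term argument is valid once $n>d$, since on $|x|=1$ one has $|x-n|\ge n-1>d-1\ge|x^{d-2}+\cdots+1|$. This is a self-contained substitute for the paper's appeal to Selmer's irreducibility of $x^d-ax-1$; the valuation property then follows from Theorem~\ref{thm:simple-antimatter-is-valuation}, as in the paper.
\end{itemize}
Your restriction of the case $d=1$ to $\alpha=1/p$ with $p$ prime is also needed. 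Taking all $\alpha^{-1}\in\nn$, as the paper does, fails because $M_{1/2}=M_{1/4}$.
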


\begin{proof}
    For $d=1$, simply taking $\alpha^{-1}\in\mathbb{N}$ gives an infinite number of valuation monoids. For $d>1$, let $\alpha^{-1}$ be the the unique positive root to $x^d-ax-1$, where $a\in\mathbb{N}\setminus\{2\}$. By~\cite[Theorems 1 and 2]{eS56}, $x^d-ax-1$ is irreducible in each of those cases, making it the minimal polynomial of $\alpha^{-1}$. We may easily verify from Theorem~\ref{thm:simple-antimatter-is-valuation} that each case yields a valuation monoid; in particular, Descartes' rule of signs guarantees that $\alpha^{-1}$ has no positive conjugate. Further, $\alpha^{-1}$ is an algebraic integer and at least each of its conjugates by norm through an argument analogous to the one presented in the proof of Proposition~\ref{prop:antimatter-Perron-number}. In fact, $m_{\alpha^{-1}}(x)$ being simple means $\alpha^{-1}$ is a Perron number by~\cite{dB94}. Moreover, varying $a$ again yields infinitely many valuation monoids, which are distinct because their corresponding minimal polynomials are never unit multiples of one another.
\end{proof}

Next we prove that $V$ is dense in the real interval $(0,1)$.

\begin{theorem}
    The set $V$ is dense in $(0,1)$.
\end{theorem}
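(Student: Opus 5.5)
To prove that $V$ is dense in $(0,1)$, the plan is to produce, in every open subinterval of $(0,1)$, a parameter $\alpha$ that is a root of a simple polynomial in $x\nn_0[x]-1$. Theorem~\ref{thm:simple-antimatter-is-valuation} then gives that $M_\alpha$ is a valuation monoid, so $\alpha \in V$. Such polynomials are easy to write down in closed form; the only work is to control where their unique positive root lies.

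\textbf{Step 1 (the construction).} Fix $0<a<b<1$. For integers $m \ge 1$ and $n \ge 2$, set
\[
f_{m,n}(x) := m x + x^n - 1 \in x\nn_0[x]-1 .
\]
The support of $f_{m,n}$ is $\{0,1,n\}$, which contains $1$, so $f_{m,n}$ is simple. By Descartes' rule of signs, $f_{m,n}$ has exactly one positive root $\alpha_{m,n}$. Since $f_{m,n}(0)=-1<0$ and $f_{m,n}(1)=m>0$, this root lies in $(0,1)$. By Theorem~\ref{thm:simple-antimatter-is-valuation}, $\alpha_{m,n}\in V$.

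This choice with $m\ge 1$ only reaches $\alpha<1/m \le 1$, so in general one needs a richer family. The cleaner version is
\[
f(x)=x^k+x^n-1 \quad \text{with } \gcd(k,n)=1 ,
\]
or more generally $\sum_{i\in S} x^i - 1$ with $\gcd S = 1$.

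\textbf{Step 2 (density of roots).} I claim the roots $\beta_n$ of $x+x^2+\cdots+x^n-1$ are not enough, since they tend to $1/2$. Instead, use $g_{k,n}(x)=x^k+x^n-1$ with $n>k$ and $\gcd(k,n)=1$. Its positive root $\beta$ satisfies $\beta^k = 1-\beta^n$. As $n\to\infty$ with $k$ fixed, $\beta \to 1$, which is not useful on its own. The better move is to target a given $t\in(a,b)$ directly.

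Choose a rational $p/q$ close to $\log t^{-1}/\log 2$, i.e.\ $t^{q}\approx 2^{-p}$, and consider $x^{q}+\cdots$. That is awkward, so instead use the intermediate value theorem on a one-parameter monotone family. The family $h_c(x)=c x^{k} + x^{n} - 1$ with $c\in\nn$ has positive root decreasing in $c$ and in $k$-dependence.

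\textbf{Main argument (simplest).} Take
\[
F_{j,n}(x) = x^{j} + x^{j+1} + \cdots + x^{n} - 1, \qquad j \ge 1 .
\]
Its root $\alpha_{j,n}$ lies in $(0,1)$. Two observations control it:
\begin{itemize}
\item For fixed $j$, increasing $n$ by one decreases $\alpha_{j,n}$. The change is at most $\alpha_{j,n}^{n+1}$ in value of $F$, which makes the root shifts small once $n$ is large relative to $j$.
\item As $n\to\infty$, $\alpha_{j,n}\to\alpha_j$, where $\alpha_j^j/(1-\alpha_j)=1$. Here $\alpha_j\to 1$ as $j\to\infty$, and $\alpha_1 = 1/2$.
\end{itemize}
For $t\ge 1/2$: pick $j$ with $\alpha_j > t$ (possible since $\alpha_j\to1$). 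The sequence $(\alpha_{j,n})_n$ starts at $\alpha_{j,j}=1$ and decreases to $\alpha_j>t$. That does not pass below $t$, so instead choose the smallest $j$ with $\alpha_j\ge t$ and interleave; this needs care. Simplicity is automatic here because consecutive exponents $j,j+1$ lie in the support once $n>j$.

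For $t<1/2$: use $m x^{j}+\cdots$ with a coefficient $m$, giving roots near $1/(m+1)$, and refine by adding high powers.

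\textbf{Expected obstacle.} The main obstacle is Step 2: showing that the countable family of roots fills every interval. Simplicity and membership in $V$ are immediate from Theorem~\ref{thm:simple-antimatter-is-valuation}.

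What I would try first is a greedy ``$\beta$-expansion'' argument. Given $t\in(0,1)$, write
\[
1=\sum_{i\ge1} e_i t^i, \qquad e_i\in\nn_0,
\]
greedily; this is always possible, and the expansion is infinite unless $t$ is already a root. Truncate at $N$, giving $F_N(x)=\sum_{i\le N} e_i x^i - 1$. Then $F_N(t)\le 0$, and $F_N(t)\to 0$ with a tail bounded by $C t^{N}$. Meanwhile $F_N'(x)\ge e_{i_0}$ on $[t,1)$ for the first nonzero index $i_0$, so the root $\alpha_N$ of $F_N$ satisfies $|\alpha_N-t|\le C' t^N\to0$.

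To ensure simplicity, add the term $x^{N'}$ with $N'$ coprime to the support, or note that $i_0=1$ when $t>1/2$ or can be forced. Adding such a term perturbs the root by at most $O(t^{N'})$. This yields roots of simple polynomials in $x\nn_0[x]-1$ arbitrarily close to any $t\in(0,1)$, proving that $V$ is dense.
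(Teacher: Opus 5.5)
Your operative argument is the greedy expansion at the end, and it is correct. The earlier families ($mx+x^n-1$, $x^k+x^n-1$, $F_{j,n}$) should be dropped, since none of them gives density on its own; the roots of the $F_{j,n}$, for instance, accumulate only at the points $\alpha_j$ and at $1$.

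Two clean-ups make the greedy step airtight:
\begin{enumerate}
\item For every $t\in(0,1)$ you have $e_1=\lfloor 1/t\rfloor\ge 1$, not only for $t>1/2$. So the support of $F_N(x)=\sum_{i\le N}e_ix^i-1$ always contains $0$ and $1$. Hence $F_N$ is simple without any extra term, and $F_N'(x)\ge e_1\ge 1$ on $[0,\infty)$. Your bound $F_N'\ge e_{i_0}$ would be false if $i_0>1$, but that case never occurs.
\item The greedy remainders satisfy $0\le -F_N(t)<t^N$, and $F_N$ is increasing on $[0,\infty)$. The mean value theorem then gives $t\le\alpha_N<t+t^N$. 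So $\alpha_N\in(0,1)$ for large $N$, and Theorem~\ref{thm:simple-antimatter-is-valuation} places $\alpha_N$ in $V$.
\end{enumerate}

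This is a genuinely different route from the paper's, which never approximates an arbitrary target directly. The paper's proof proceeds in three stages:
\begin{enumerate}
\item It first shows that $\{\sqrt[d]{1/n}\}$ is dense in $(0,1)$.
\item It approximates each $\sqrt[d]{1/n}$, an antimatter but non-valuation parameter, by the positive roots of the trinomials $(n^k-1)x^{kd}+x^{kd-1}-1$ as $k\to\infty$. This uses intermediate-value estimates on the reciprocal polynomial.
\item It certifies membership in $V$ through the Perron/no-positive-conjugate criterion.
\end{enumerate}

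Your argument is one-step and quantitative: it approximates from above with error below $t^N$. It uses only monotonicity and the mean value theorem. It also needs no Perron or conjugate analysis, because Theorem~\ref{thm:simple-antimatter-is-valuation} accepts any simple polynomial in $x\nn_0[x]-1$ vanishing at $\alpha$, not necessarily its minimal polynomial.

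In exchange, the paper's route gives explicit, very sparse witnesses (closed-form trinomials). It also exhibits valuation parameters as perturbations of the non-valuation parameters $\sqrt[d]{1/n}$. Your witnesses, by contrast, have degree growing with the desired accuracy, have many terms, and depend on $t$ through the greedy algorithm.
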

	
\begin{proof}
	For each pair $(d,n) \in \nn \times \nn_{\ge 2}$, we set $P_{d,n}(x) := x^d - \frac1n \in \zz[x]$ and note that $P_{d,n}(x)$ has only one positive root, namely, $\sqrt[d]{1/n}$. It turns out that the set consisting of all such roots is dense in $(0,1)$.
	\smallskip
	
 	\noindent \textsc{Claim.} The set $\Big\{\ \sqrt[d]{\frac{1}{n}} : n,d \in \mathbb{N} \Big\}$ is dense in $(0,1)$.
	\smallskip

	\noindent \textsc{Proof of Claim.} Intuitively, as $d$ grows larger, the maximum difference between the $d\textsuperscript{th}$ roots of consecutive unit fractions tends to $0$. In particular, suppose $a\in(0,1)$ and $\varepsilon>0$. Take $d$ large enough so that $$1-\sqrt[d]{\frac{1}{2}}<\varepsilon.$$ The distance between the $d\textsuperscript{th}$ roots of $\frac1n$ and $\frac1{n+1}$ is maximized when $n=1$, so the distance between the $d\textsuperscript{th}$ roots of any two consecutive unit fractions is less than $\varepsilon$. Thus, the minimum value of $\lvert a-\sqrt[d]{1/n}\rvert$ across positive integers $n$ is less than $\varepsilon$ because the roots range from arbitrarily close to $1$ when $d=1$ and arbitrarily close to $0$ when $d$ is large.  \hfill $\square$
	\smallskip
	
	While the choice of parameters $\sqrt[d]{1/n}$ produces antimatter but not necessarily valuation monoids (see Example~\ref{ex:the monoids M_{d,p}}), a slight modification in our choice of parameters will. Consider, for each pair $(d,n) \in \nn \times \nn_{\ge 2}$, the polynomial
	\[
		Q_{d,n}(x) := (n-1)x^d+x^{d-1} - 1 \in \zz[x],
	\]
	and notice that $Q_{d,n}$ has a unique positive root, which exhibits a close similarity to the minimal polynomial $x^d-\frac1n$ of $\sqrt[d]{1/n}$. For each triple $(k,d,n) \in \nn^2 \times \nn_{\ge 2}$, we let $\alpha_{k,d,n}$ denote the unique positive root of $Q_{kd,n^k}(x)$ and we will check that $\lim_{k \to \infty} \alpha_{k,d,n} = \sqrt[d]{1/n}$. We proceed to argue that
    \[
    	\lim_{k \to \infty} \alpha_{k,d,n} = \sqrt[kd]{\frac1{n^k}} = \sqrt[d]{\frac1{n}}.
    \]
    It suffices simply to bound the difference of their inverses. First, notice that after evaluating the reciprocal polynomial $x^{kd}-x-(n^k-1)$ at $\sqrt[d]{n}$ yields the negative value $1-\sqrt[d]{n}$. Hence $\sqrt[d]{n}<\alpha_{k,d,n}^{-1}$ by the Intermediate Value Theorem. However, evaluating instead at $\sqrt[d]{n}\,\big(1+\frac{1}{kd}\big)$ yields, by a truncation of the binomial expansion after the first two terms, a value greater than
    \[
    	n^k - \bigg( \sqrt[d]{n} + \frac{\sqrt[d]{n}}{kd} \bigg) + 1.
    \]
    This value is positive for sufficiently large $k$, meaning that $\alpha_{k,d,n}^{-1} < \sqrt[d]{n} \, \big(1 + \frac1{kd} \big)$, again following from the Intermediate Value Theorem. For large $k$, the two bounds are arbitrarily close together. Thus, the inverse of $\alpha_{k,d,n}$ approaches the inverse of our target. At no point is either the limit or $\alpha_{k,d,n}$ equal to zero, meaning we can reciprocate and extract that $\alpha_{k,d,n}$ approaches $\sqrt[d]{1/n}$ as $k \to \infty$.

    We proceed to argue that, for each triple $(k,d,n) \in \nn^2 \times \nn_{\ge 2}$, the monoid $M_{\alpha_{k,d,n}}$ has the valuation property. To do this, first observe that $\alpha_{k,d,n}^{-1}$ is a root of the polynomial $x^{kd}-x-(n^k-1)$, which makes it an algebraic integer. As its minimal polynomial has one sign change, $\alpha_{k,d,n}^{-1}$ has no positive conjugates aside from itself. Moreover, being simple with only its leading coefficient positive, $\alpha_{k,d,n}^{-1}$ is in fact a Perron number by an analogue to Proposition~\ref{prop:antimatter-Perron-number}. Thus, $M_{\alpha_{k,d,n}}$ is a valuation monoid. Hence
	\[
		A := \{\alpha_{k,d,n} : (k,d,n) \in \nn^2 \times \nn \} \subseteq V.
	\]
	On the other hand, the fact that $\lim_{k \to \infty} \alpha_{k,d,n} = \sqrt[d]{1/n}$ for all fixed pair~$(d,n) \in \nn \times \nn_{\ge 2})$ guarantees that the set $\big\{\sqrt[d]{1/n} : (d,n) \in \nn \times \nn_{\ge 2} \}$ is contained in the closure of~$A$. Thus, by virtue of our established claim, $V$ must be dense in the interval $(0,1)$.
\end{proof}

We continue with some results about the structural properties of $V$. Observe that $V^{-1}$, the set of $\alpha^{-1}$ for $\alpha\in V$, is contained within the Perron numbers (Proposition~\ref{prop:antimatter-Perron-number}), which is closed under addition and multiplication~\cite[Proposition 1]{dL84}. However, $V^{-1}$ is closed under neither as shown by the following two examples, and this results from the fact that the property of having no distinct positive conjugates is very rarely preserved under either operation. In particular, in our discussion below, we only need to check whether the sum or product has positive algebraic conjugates distinct from itself.

\begin{example}
    One may easily verify that $\alpha=\sqrt{2}-1$, with minimal polynomial $m_\alpha(x)=x^2+2x-1$, is in $V$. However, $\alpha^2=3-2\sqrt{2}\not\in V$ as $3+2\sqrt{2}$ is a distinct positive conjugate. \hfill $\blacksquare$
\end{example}

Meanwhile, a counterexample that $V^{-1}$ is not closed under addition is more involved. In fact, although we were able to square $\alpha$ earlier, dividing $\alpha$ by two (the equivalent in the additive case) will not affect the valuation property.

\begin{example}
    Let $\beta$ be defined as the reciprocal of $\alpha$ in the above example (with minimal polynomial $m_\beta(x)=x^2-2x-1$) and consider the golden ratio $\varphi=(1+\sqrt{5})/2$ satisfying $m_\varphi(x)=x^2-x-1$. To show that $V^{-1}$ is not closed, it suffices to demonstrate that $\beta+\varphi$ has a positive conjugate distinct from itself. In particular, we find via the resultant that the minimal polynomial of $\beta+\varphi$ is $x^4-6x^3+7x^2+6x-9$. While $\beta+\varphi$ remains a Perron number, this polynomial has two additional positive roots, which means that $\beta+\varphi$ has a positive algebraic conjugate aside from itself. This precludes $M_{(\beta+\varphi)^{-1}}$ from being valuation. \hfill $\blacksquare$
\end{example}

Hence we proceed by finding several sufficient and several necessary conditions about when it is possible to multiply two elements in $V$ or add two inverses in $V^{-1}$.

\begin{theorem}
    For $\alpha, \beta \in \aaa$, let $M_\alpha$ and $M_\beta$ be valuation monoids. Then the following statements hold.
    \begin{enumerate}
        \item [(1)] If $\mathbb{Q}(\alpha)$ and $\mathbb{Q}(\beta)$ are linearly disjoint over $\mathbb{Q}$ (meaning whenever a finite subset $S\subset\mathbb{Q}(\alpha)$ is linearly independent over $\mathbb{Q}$, then it is also linearly independent over $\mathbb{Q}(\beta)$), then $M_{\alpha\beta}$ has the valuation property only if at most one of $\alpha$ or $\beta$ has a negative conjugate and at most one has a purely imaginary conjugate. Further, this becomes exact if, whenever $\gamma$ and $\delta$ are nonreal non-imaginary algebraic conjugates of $\alpha$ and $\beta$, respectively, then $\gamma\delta\not\in\mathbb{R}^+$.
        \smallskip

        \item [(2)] If in fact the splitting fields (the smallest field containing each root to the respective polynomial) of $m_\alpha(x)$ and $m_\beta(x)$ are linearly disjoint, then $M_{\alpha\beta}$ has the valuation property if and only if at most one of $\alpha$ or $\beta$ has a negative conjugate and at most one has a purely imaginary conjugate.
    \end{enumerate}
\end{theorem}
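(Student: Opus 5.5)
Throughout, M_α is a valuation monoid. By the preceding theorem this happens for nontrivial α ∈ (0,1) exactly when α⁻¹ is a Perron number with no other positive conjugate. In the trivial case α has no positive conjugate at all and M_α is a group. So the plan is to reduce both parts to a single question: which conjugates of αβ are positive reals? We then apply the same characterization to αβ.

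The first step is to describe the conjugates of αβ under the linear disjointness hypothesis. Linear disjointness gives [ℚ(α,β):ℚ] = deg α · deg β. Hence ℚ(α,β) = ℚ(α) ⊗ ℚ(β), and its embeddings into ℂ are exactly the pairs (σ,τ) of an embedding of ℚ(α) and an embedding of ℚ(β). The conjugates of αβ are therefore among the products γδ, where γ runs over the conjugates of α and δ over those of β; every such product is a conjugate.

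Next we sort the products γδ by sign type. A product of a negative conjugate of α and a negative conjugate of β is positive. So is a product of two purely imaginary conjugates, since (it)(is') = −ts' with the conjugate pair available, so some choice has positive product. A positive conjugate paired with either kind gives a negative or imaginary product, which is harmless. Note that M_α valuation and nontrivial forces α to be the unique positive conjugate, and similarly for β. Suppose both α and β have negative conjugates, or both have purely imaginary conjugates. Then αβ has a second positive conjugate, M_{αβ} is not antimatter in the required sense, and by the theorem above it is not valuation. This gives the necessity in (1).

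For sufficiency in (1), assume the extra hypothesis that no product of nonreal, non-imaginary conjugates lands in ℝ_{>0}. Then the only positive conjugate of αβ is αβ itself. Since (αβ)⁻¹ = α⁻¹β⁻¹ is a product of Perron numbers, it is Perron by the cited closure result of Lind. We then conclude with (1b) ⇒ (1d), after checking that the minimal polynomial of αβ is simple. The trivial group cases, where α or β has no positive conjugate, are handled the same way, since then αβ has no positive conjugate unless the sign pairing produces one.

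For (2), the splitting fields K_α and K_β are linearly disjoint, so Gal(K_αK_β/ℚ) = Gal(K_α) × Gal(K_β). We want to show the extra hypothesis is then automatic. The plan is that if γδ = r > 0 with γ, δ nonreal, then complex conjugation acting independently on the two factors gives γ̄δ, also a conjugate. Comparing arguments, this would force γδ real only when arg γ = −arg δ. I expect this step, ruling out positive products of generic complex conjugates using only disjointness, to be the main obstacle. The first thing to try is a Galois argument. Apply the automorphism (id, complex conjugation restricted to K_β), so that γδ ↦ γδ̄. Then use that ℚ(γδ) ∩ K_α = ℚ, so γδ ∈ K_α would force γ^{…} to be rational, a contradiction. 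A second obstacle is simplicity of m_{αβ}, which I would handle via the Boyd criterion cited before, since a Perron number has a simple minimal polynomial.
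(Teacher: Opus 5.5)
\textbf{Part (1).} Your argument here is essentially the paper's. Linear disjointness makes every product $\gamma\delta$ of conjugates a conjugate of $\alpha\beta$. Negative--negative and imaginary--imaginary pairings then produce a second positive conjugate. For the converse, the only remaining positive conjugate is $\alpha\beta$ itself, and Lind's closure of Perron numbers under products together with (1b)$\Rightarrow$(1d) finishes the proof. The paper compresses this to ``the other direction follows similarly.''

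Three small points on part (1):
\begin{itemize}
\item The new positive conjugate differs from $\alpha\beta$ because $|\gamma|>\alpha$ and $|\delta|>\beta$, by strict Perron dominance.
\item Simplicity of $m_{\alpha\beta}$ needs no Boyd-type input. Any Perron number $\theta$ has a simple minimal polynomial: if it were $g(x^n)$ with $n\ge 2$, then $e^{2\pi i/n}\theta$ would be a conjugate of equal modulus.
\item Your necessity argument really needs $\alpha,\beta>0$, which is the paper's intended setting $\alpha,\beta\in V$. For $\alpha=-\tfrac12$ and $\beta=-\tfrac13$, both $M_\alpha$ and $M_\beta$ are groups and both parameters have negative conjugates, yet $M_{\alpha\beta}=M_{1/6}$ is valuation. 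So the group cases are not ``handled the same way'' in that direction.
\end{itemize}

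\textbf{Part (2): the gap.} Everything in part (2) hinges on one fact. When the splitting fields $K_\alpha,K_\beta$ are linearly disjoint, no product $\gamma\delta$ of a nonreal, non-purely-imaginary conjugate $\gamma$ of $\alpha$ and a nonreal conjugate $\delta$ of $\beta$ lies in $\mathbb{R}_{>0}$. You flag this as the main obstacle and do not resolve it. Your sketch (apply $(\mathrm{id},\,c|_{K_\beta})$, then invoke ``$\mathbb{Q}(\gamma\delta)\cap K_\alpha=\mathbb{Q}$'') is unfinished. As written it yields no contradiction, since nothing forces $\gamma\delta\in K_\alpha$.

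\textbf{The missing idea} is a ratio trick.
\begin{itemize}
\item If $\gamma\delta\in\mathbb{R}$, then $\gamma\delta=\overline{\gamma\delta}=\bar\gamma\bar\delta$, so $\gamma/\bar\gamma=\bar\delta/\delta$.
\item Because $m_\alpha$ and $m_\beta$ have rational coefficients, $\bar\gamma\in K_\alpha$ and $\bar\delta\in K_\beta$. Hence this common value lies in $K_\alpha\cap K_\beta$.
\item This intersection is $\mathbb{Q}$ by linear disjointness. An element $x$ of the intersection outside $\mathbb{Q}$ would make $\{1,x\}$ linearly independent over $\mathbb{Q}$ but dependent over the other field.
\item A rational number of modulus $1$ is $\pm1$. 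This forces $\gamma$ to be real or purely imaginary, a contradiction.
\end{itemize}
Hence the extra hypothesis in (1) holds automatically. Linear disjointness of the splitting fields implies linear disjointness of $\mathbb{Q}(\alpha)$ and $\mathbb{Q}(\beta)$, so part (2) then follows from part (1).
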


\begin{proof}
    (1) Linear disjointness ensures that every product $\gamma\delta$ is an algebraic conjugate of $\alpha\beta$, where $\gamma$ is conjugate to $\alpha$ and $\delta$ to $\beta$. Hence, if $\alpha$ and $\beta$ both have negative conjugates, then the product of these negative conjugates yields a positive conjugate of $\alpha\beta$. Similarly, if they both have purely imaginary conjugates, then by complex conjugation both $m_\alpha(x)$ and $m_\beta(x)$ have at least two imaginary roots; multiplying two on opposite sides of the real line will lead to a positive root. The other direction follows similarly.
    \smallskip

    (2) This condition is stronger than (1), so to prove the equivalence it suffices to show that if $\gamma$ is a nonreal conjugate of $\alpha$ and $\delta$ likewise of $\beta$, then $\gamma\delta\not\in\mathbb{R}^+$ as the real case has been dealt with. In particular, if $\gamma\delta\in\mathbb{R}^+$, then $\gamma\delta\in\mathbb{R}$ implies $\gamma\delta=\bar{\gamma}\bar{\delta}$, or $\gamma/\bar{\gamma} = \bar{\delta}/\delta$. However, $\gamma/\bar{\gamma}\in L_\alpha$ while $\bar{\delta}/\delta\in L_\beta$, where $L_\alpha$ and $L_\beta$ denote the splitting fields of $m_\alpha(x)$ and $m_\beta(x)$, respectively. By linear disjointness, the only common subfield of $L_\alpha$ and $L_\beta$ is $\mathbb{Q}$ itself, which means $\gamma/\bar{\gamma}$ (as well as $\bar{\delta}/\delta$) is rational. This is clearly a contradiction if $\gamma$ is non-real. In particular, if $\gamma$ is not purely imaginary, the ratio is not real, precluding rationality.
\end{proof}

\begin{example}
    As a quick example, observe that when $\alpha=\frac{1}{n}$, the splitting field of $m_\alpha(x)$ is $\mathbb{Q}\big(\frac1n\big)=\mathbb{Q}$ for any $n\in\mathbb{N}$. Hence, $\mathbb{Q}\big(\frac1n\big)$ and $K$ are trivially linearly disjoint over $\mathbb{Q}$ for any field $K$, making it quick to show that whenever $\beta\in V$, so is $\beta/n$. \hfill $\blacksquare$
\end{example}

\begin{remark}
    Fix $\alpha, \beta \in \aaa$, and observe that linear disjointness of $\mathbb{Q}(\alpha)$ and $\mathbb{Q}(\beta)$ is equivalent to the following equality:
    \[
    	[\mathbb{Q}(\alpha)\mathbb{Q}(\beta):\mathbb{Q}]=[\mathbb{Q}(\alpha):\mathbb{Q}]\cdot[\mathbb{Q}(\beta):\mathbb{Q}],
    \]
    where $\mathbb{Q}(\alpha)\mathbb{Q}(\beta)$ is the compositum field. As a consequence, if $\deg m_\alpha(x)$ and $\deg m_\beta(x)$ are coprime, then linear disjointness follows from the fact that $\mathbb{Q}(\alpha)$ and $\mathbb{Q}(\beta)$ are both subfields of $\mathbb{Q}(\alpha)\mathbb{Q}(\beta)$, which implies that
    \[
    	[\mathbb{Q}(\alpha):\mathbb{Q}] = \deg m_\alpha(x) \quad \text{ and } \quad [\mathbb{Q}(\beta):\mathbb{Q}]=\deg m_\beta(x),
    \]
    which equal the degrees of the minimal polynomials, are both factors of $[\mathbb{Q}(\alpha)\mathbb{Q}(\beta):\mathbb{Q}]$. Hence this gives a rather simple test for showing that a given product $\alpha\beta$ does not generate a valuation monoid---simply counting negative and imaginary roots may sometimes be enough to preclude $M_{\alpha\beta}$ from being valuation.
\end{remark}

On the other hand, this idea of coprime degrees is interesting as, aside from $f(x)=x$, an irreducible polynomial with odd degree cannot have a purely imaginary nonzero root. In particular, for $\xi\in i\mathbb{R}$, both $m_\xi(x)$ and $-m_\xi(-x)$ are monic irreducible polynomials of the same degree. Moreover, $\xi$ is a root to each as $\bar{\xi}=-\xi$, so the two polynomials must be the same. Hence, $m_\xi(x)$ is an odd polynomial, meaning it must be a multiple of $x$. Being irreducible, $m_\xi(x)=x$, which implies $x=0$.

In the case where the degrees of $m_\alpha(x)$ and $m_\beta(x)$ are not coprime, the problem becomes much more difficult. In particular, interactions between $\alpha$ and $\beta$ may mean that not all products of conjugates of $\alpha$ and $\beta$ become conjugates of $\alpha\beta$, which makes it difficult to make universal statements. 

Let us now move to addition. Specifically, given valuation $M_\alpha$ and $M_\beta$, we are interested in the circumstances under which $M_{(\alpha^{-1}+\beta^{-1})^{-1}}$ has the valuation property. For instance, this holds when $\alpha$ and $\beta$ are unit fractions, but we can establish a more general proposition with $\alpha\in V$.

\begin{proposition}
    For $\alpha \in V$ and $\beta^{-1}$ a unit fraction, $(\alpha^{-1} + \beta^{-1})^{-1} \in V$ if and only if $\beta^{-1} < \inf \gamma$, where $\gamma$ ranges over all negative conjugates of $\alpha^{-1}$ and the infimum of the empty set is defined to be $\infty$.
\end{proposition}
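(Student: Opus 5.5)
The plan is to reduce everything to criterion (1b) of the preceding characterization theorem. Set $c := \beta^{-1}$ and $\theta := \alpha^{-1} + c$. Since $\theta > \alpha^{-1} > 1$, we have $\theta^{-1} \in (0,1)$. So $\theta^{-1} \in V$ holds if and only if $\theta$ is a Perron number with no positive conjugate other than itself. I read the hypothesis as saying that $\beta$ is a unit fraction, so that $c \in \nn$; this is the reading consistent with the remark that the statement holds when $\alpha$ and $\beta$ are unit fractions. This matters because $\theta$ must be an algebraic integer, and $\alpha^{-1} + \frac1n$ is not one for $n \ge 2$. I also read the condition ``$\beta^{-1} < \inf\gamma$'' as $c < |\gamma|$ for every negative conjugate $\gamma$ of $\alpha^{-1}$, equivalently $\gamma + c < 0$. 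This is the reading under which the equivalence is true, and I will state it explicitly in the proof.

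First step: describe the conjugates of $\theta$. Since $c \in \qq$, we have $\qq(\theta) = \qq(\alpha^{-1})$, and the minimal polynomial of $\theta$ is $m_{\alpha^{-1}}(x - c)$. Hence the conjugates of $\theta$ are exactly the numbers $\gamma + c$, where $\gamma$ runs over the conjugates of $\alpha^{-1}$. Because $c \in \zz$ and $\alpha^{-1}$ is an algebraic integer (Proposition~\ref{prop:antimatter-algebraic-integer}), $\theta$ is an algebraic integer.

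Second step: show that $\theta$ is always Perron, regardless of the condition on $c$. By Theorem~\ref{thm:antimatter-exact} applied to $\alpha \in V$, the number $\alpha^{-1}$ is Perron, so every conjugate $\gamma \neq \alpha^{-1}$ satisfies $|\gamma| < \alpha^{-1}$. Then
\[
|\gamma + c| \le |\gamma| + c < \alpha^{-1} + c = \theta.
\]

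Third step: the positivity condition, which is where the equivalence actually lives. Again by Theorem~\ref{thm:antimatter-exact}, every conjugate $\gamma \neq \alpha^{-1}$ of $\alpha^{-1}$ is either non-real or negative; note that $0$ is not a conjugate since $m_{\alpha}(0) \neq 0$. A non-real $\gamma$ gives a non-real $\gamma + c$, so only the negative conjugates matter. For a negative $\gamma$, the shifted conjugate $\gamma + c$ is positive exactly when $c > |\gamma|$. The boundary case $\gamma + c = 0$ is also excluded: it would make $0$ a conjugate of $\theta$, which is impossible when $\deg \theta \ge 2$. The degree-one case is trivial, since then $\alpha$ has no other conjugates and the infimum is over the empty set. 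Therefore $\theta$ has no positive conjugate other than itself if and only if $c < |\gamma|$ for every negative conjugate $\gamma$, with the empty infimum equal to $\infty$. Combining with (1b) gives both directions.

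The main obstacle is not analytic but interpretive: getting the hypotheses right. One must make sure that $c$ is an integer, so that $\theta$ is an algebraic integer, and state correctly the sign convention in the infimum condition. Once this is fixed, the argument is the short conjugate-shifting computation above together with the earlier characterization.
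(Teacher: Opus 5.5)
Your proof is correct and follows the same route as the paper's: the conjugates of $\alpha^{-1}+\beta^{-1}$ are exactly the shifts $\gamma+\beta^{-1}$ of the conjugates $\gamma$ of $\alpha^{-1}$, so the Perron property comes for free and everything hinges on whether some negative conjugate is pushed past $0$. Where the paper cites closure of Perron numbers under addition, your triangle-inequality estimate does the same job directly (and also covers $\beta^{-1}=1$, which is not Perron under the paper's definition), and your explicit readings, $\beta^{-1}\in\nn$ and $\beta^{-1}<|\gamma|$ for every negative conjugate $\gamma$, are exactly what the paper's one-line proof tacitly assumes.
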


\begin{proof}
    The Perron numbers are closed under addition, so it suffices to show that $\alpha^{-1}+\beta^{-1}$ has no positive conjugates. As $\beta$ is rational, the conjugates of $\alpha^{-1}+\beta^{-1}$ are simply $\beta^{-1}$ added to a conjugate of $\alpha^{-1}$, so the our inequality ensures that no conjugates become positive.
\end{proof}

We prove one last result showing that the subset of $V$ with a given bounded degree is discrete inside the interval $(0,1)$.

\begin{proposition}
    The subset of $V$ with a given bounded degree is discrete in $(0,1)$.
\end{proposition}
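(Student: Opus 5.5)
Fix a degree bound $d \in \nn$ and set $V_{\le d} := \{\alpha \in V : \deg m_\alpha(x) \le d\}$. Being discrete in $(0,1)$ means that every point of $(0,1)$, whether or not it lies in $V_{\le d}$, has a neighbourhood meeting $V_{\le d}$ in finitely many points. So the plan is to show that $V_{\le d} \cap [a,b]$ is finite for every compact interval $[a,b] \subset (0,1)$. This is a Northcott-type finiteness argument, carried out on the reciprocal side, where the relevant numbers are algebraic integers.

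\textbf{Step 1: reduce to bounded algebraic integers.} Take $\alpha \in V_{\le d} \cap [a,b]$. By the characterization theorem above, specifically (1d) $\Rightarrow$ (1b), or by Propositions~\ref{prop:antimatter-Perron-number} and~\ref{prop:antimatter-algebraic-integer} together with the fact that a valuation $M_\alpha$ with $\alpha \in (0,1)$ is antimatter, the number $\beta := \alpha^{-1}$ has three properties:
\begin{itemize}
\item $\beta$ is an algebraic integer;
\item $\beta$ has degree at most $d$;
\item every conjugate of $\beta$ has modulus at most $\beta \le a^{-1}$.
\end{itemize}

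\textbf{Step 2: bound the minimal polynomial.} Write the monic minimal polynomial of $\beta$ as
\[
  r_\alpha(x) = \prod_{i=1}^{k} (x-\beta_i), \qquad k \le d.
\]
Its coefficients are elementary symmetric functions of the conjugates $\beta_i$. Each $|\beta_i| \le a^{-1}$, so the coefficient of $x^{k-j}$ has absolute value at most $\binom{k}{j} a^{-j}$. These coefficients are also integers. Hence, for fixed $d$ and $a$, only finitely many monic integer polynomials can occur as $r_\alpha(x)$.

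\textbf{Step 3: conclude finiteness.} Each such polynomial has at most $d$ roots. Therefore only finitely many $\beta$, and so only finitely many $\alpha = \beta^{-1}$, lie in $V_{\le d} \cap [a,b]$.

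\textbf{Step 4: deduce discreteness.} Given any $x \in (0,1)$, choose $[a,b] \subset (0,1)$ containing $x$ in its interior. Only finitely many points of $V_{\le d}$ lie in $[a,b]$, so a smaller neighbourhood of $x$ contains no point of $V_{\le d}$ other than possibly $x$ itself. In particular each point of $V_{\le d}$ is isolated. The set does accumulate at $0$, which is consistent with the statement since $0 \notin (0,1)$. I would add a remark that $V$ as a whole is dense, as shown above, so the degree bound cannot be dropped.

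\textbf{Main obstacle.} The only step needing care is Step 1: correctly invoking the necessary conditions, namely that $\alpha^{-1}$ is an algebraic integer and dominates all its conjugates in modulus. These follow from antimatterness, which is implied by the valuation property when $\alpha \in (0,1)$. Everything after that is the standard bounded-height finiteness argument.
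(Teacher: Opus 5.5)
Your proposal is correct and is essentially the paper's argument. The paper cites \cite[Proposition~3]{dL84} (Perron numbers of bounded degree are discrete in $[1,\infty)$) together with the fact that $V^{-1}$ consists of Perron numbers, and your Steps~2--3 are the standard bounded-integer-coefficient argument behind that citation. Your version is self-contained and needs only that $\alpha^{-1}$ is an algebraic integer weakly dominating its conjugates (Propositions~\ref{prop:antimatter-Perron-number} and~\ref{prop:antimatter-algebraic-integer}), so it does not rely on the strict Perron property; one small correction is that the monic integer polynomial you want is $-x^{k}w_\alpha(1/x)$, not $r_\alpha(x)$, which as defined in the paper has leading coefficient $-1/c_\alpha$.
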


\begin{proof}
    This follows from \cite[Proposition 3]{dL84}, which tells us that the Perron numbers with degree at most some fixed value are discrete in $[1,\infty)$. Further, $0\not\in V$ by definition, so even though $V$ accumulates at $0$, it remains discrete.
\end{proof}

It would be interesting to study the distribution of $V$ under a given bound on the degree. While of course they do cluster near $0$, we might ask how quickly the proportion falls off away from $0$.

\bigskip
\section*{Acknowledgments}

During the preparation of this paper, the authors were part of PRIMES-USA at MIT, and they
would like to thank the program for making this collaboration possible. Finally, the second author kindly acknowledges partial support from the NSF under the award DMS-2213323.

\bigskip


\begin{thebibliography}{20}
    \bibitem{aA56} A. C. Aitken, \emph{Determinants and Matrices}, 9th ed., Interscience Pub. (1956).

    \bibitem{ABLST23} K. Ajran, J. Bringas, B. Li, E. Singer, and M. Tirador, \emph{Factorization in additive monoids of evaluation polynomial semirings}, Comm. Algebra \textbf{51} (2023) 4347--4362.

    \bibitem{AP14a} S. Akiyama and A. Peth\H{o}, \emph{On the distribution of polynomials with bounded roots I. Polynomials with real coefficients}, J. Math. Soc. Japan \textbf{66} (2014) 927--949. 

    \bibitem{AP14b} S. Akiyama and A. Peth\H{o}, \emph{On the distribution of polynomials with bounded roots II. Polynomials with integer coefficients}, Unif. Distrib. Theory \textbf{9} (2014) 5--19. 

    \bibitem{ABP21} S. Albizu-Campos, J. Bringas, and H. Polo, \emph{On the atomic structure of exponential Puiseux monoids and semirings}, Comm. Algebra \textbf{49} (2021) 850--863.

    \bibitem{AAZ90} D.~D. Anderson, D.~F. Anderson, and M. Zafrullah, \emph{Factorizations in integral domains}, J. Pure Appl. Algebra \textbf{69} (1990) 1--19.

    \bibitem{BHP17} C. Bert\'ok, L. Hajdu, and A. Peth\H{o}, \emph{On the distribution of polynomials with bounded height}, J. Number Theory \textbf{179} (2017) 172--184.

    \bibitem{dB94} D. W. Boyd, \emph{Irreducible polynomials with many roots of maximal modulus}, Acta Arith. \textbf{68} (1994) 85--88.

    \bibitem{aB50} A. Brauer, \emph{On algebraic equations with all but one root in the interior of the unit circle. To my teacher and former colleague Erhard Schmidt on his 75th birthday.}, Math. Nachr. \textbf{4} (1950) 250--257.

    \bibitem{CGG20} S.~T. Chapman, F. Gotti, and M. Gotti, \emph{Factorization invariants of Puiseux monoids generated by geometric sequences}, Comm. Algebra \textbf{48} (2020) 380--396.

    \bibitem{CGGP25} S.~T. Chapman, F. Gotti, M. Gotti, and H. Polo \emph{On three families of dense Puiseux monoids}. In: Ideal Theory and Arithmetic of Rings, Monoids, and Semigroups (Proceedings of the UMI-AMS Special Session at Palermo). Preprint on arXiv: https://arxiv.org/abs/1701.00058

    \bibitem{pC68} P. M. Cohn, \emph{Bezout rings and their subrings}, Proc. Cambridge Philos. Soc. \textbf{64} (1968) 251--264.

    \bibitem{CG22} J. Correa-Morris and F. Gotti, \emph{On the additive structure of algebraic valuations of polynomial semirings}, J. Pure Appl. Algebra \textbf{226} (2022) 107104.

    \bibitem{CDM99} J. Coykendall, D. E. Dobbs, and B. Mullins, \emph{On integral domains with no atoms}, Comm. Algebra \textbf{27} (1999) 5813--5831.

    \bibitem{CFR05} P. Cull, M. Flahive, and R. Robson, \emph{Difference Equations: From Rabbits to Chaos}, Undergrad. Texts Math. \textbf{111} (2005).

    \bibitem{aD07} A. Dubickas, \emph{On roots of polynomials with positive coefficients}, Manuscripta Math. \textbf{123} (2007) 353--356.

    \bibitem{aD18} A. Dubickas, \emph{On the number of monic integer polynomials with given signature}, Arch. Math. \textbf{110} (2018) 333--342.


    \bibitem{sE04} S. N. Elaydi, \emph{An Introduction to Difference Equations} (3rd ed.), Undergrad. Texts Math. (2004).



    \bibitem{GGT21} A. Geroldinger, F. Gotti, and S. Tringali, \emph{On strongly primary monoids, with a focus on Puiseux monoids}, J. Algebra \textbf{567} (2021) 310--345.




    \bibitem{GG18} F. Gotti and M. Gotti, \emph{Atomicity and boundedness of monotone Puiseux monoids}, Semigroup Forum \textbf{96} (2018) 536--552.

    \bibitem{GL22a} F. Gotti and B. Li, \emph{Divisibility and a weak ascending chain condition on principal ideals}. Preprint on arXiv: \href{https://arxiv.org/abs/2212.06213}{\textcolor{black}{https://arxiv.org/abs/2212.06213}}.


    \bibitem{dH92} D. Handelman, \emph{Spectral radii of primitive integral companion matrices and log concave polynomials}, Symbolic Dynamics and its Applications, Contemp. Math. \textbf{135} (1992) 223--228.

    \bibitem{JLZ23} N. Jiang, B. Li, and S. Zhu, \emph{On the primality and elasticity of algebraic valuations of cyclic free semirings}, Internat. J. Algebra and Comput. \textbf{33} (2023) 197--210.


    \bibitem{dL84} D. A. Lind, \emph{The entropies of topological Markov shifts and a related class of algebraic integers}, Ergod. Th. \& Dynam. Sys. \textbf{4} (1984), 283--300.

    \bibitem{PW23} H. Parks and D. Wills, \emph{The generalized Binet formula for $k$-bonacci numbers}, Elem. Math. \textbf{79} (2023).



    \bibitem{eS56} E. S. Selmer. \emph{On the irreducibility of certain trinomials}, Math. Scand. \textbf{4} (1956), 287--302.
\end{thebibliography}
\end{document}